\numberwithin{equation}{section}
\newcommand{\ch}[1]{#1}
\newcommand{\chs}[1]{#1}
\newcommand{\chfin}[1]{#1}
\newcommand{\halmos}{\rule{1ex}{1.4ex}}
\newcommand{\proofbox}{\hspace*{\fill}\mbox{$\halmos$}}
\newenvironment{proofof}[1]{\noindent {\it
Proof of #1}.}{\proofbox\par\smallskip\par}
\newenvironment{prf}{\noindent {\it
Proof}.}{\proofbox\par\smallskip\par}
\newtheorem{theorem}{Theorem}[section]
\newtheorem{lemma}[theorem]{Lemma}
\newtheorem{proposition}[theorem]{Proposition}
\newtheorem{corollary}[theorem]{Corollary}
\theoremstyle{definition}
\theoremstyle{remark}
\newcounter{thmenumerate}
\newcounter{xenumerate}
\newcommand\REM[1]{\texttt{[#1]}\marginal{XXX}}
\xdef\klockan{\the\count1.0\the\count255}
\xdef\klockan{\the\count1.\the\count255}\fi
\def\rompar(#1){\textup(#1\textup)}    % usage: \rompar(...)
\def\xexp(#1){e^{#1}}
\newcommand\ntoo{\ensuremath{{n\to\infty}}}
\newcommand\iid{i.i.d.\spacefactor=1000}
\newcommand\eg{e.g.\spacefactor=1000}
\newcommand\tN{\ensuremath{\tilde{N}}}
\newcommand\tvep{\ensuremath{\tilde{\vep}}}
\newcommand\Op{O_{\mathrm p}}
\newcommand\op{o_{\mathrm p}}
\newcommand\bbN{\mathbb N}
\newcommand\E{\operatorname{\mathbb E{}}}
\renewcommand\P{\operatorname{\mathbb P{}}}
\newcommand\Bi{\operatorname{Bi}}
\newcommand\gd{\delta}
\newcommand\gl{\lambda}
\newcommand\gs{\sigma}
\newcommand\ep{\varepsilon}
\renewcommand\phi{\varphi}
\newcommand\cB{\mathcal B}
\def\[#1]{[\![#1]\!]}
\renewcommand{\=}{:=}
\newcommand{\mgf}{moment generating function}
\newcommand\rv{random variable}
\newcommand\Cv{\ensuremath{C({\bf v})}}
\newcommand\cv{\ensuremath{|C({\bf v})|}}
\newcommand{\vv}{\ensuremath {\bf v}}
\newcommand\whp{\textbf{whp}}
\newcommand\wvhp{\textbf{wvhp}}
\newcommand\marginal[1]{\marginpar{\raggedright\parindent=0pt\tiny #1}}
\newcommand\REMRM[1]{}
\newcommand\Vto{\ensuremath{{V \to\infty}}}
\newcommand{\gr}{{\mathbb G}}
\newcommand{\eq}{\begin{equation}}
\newcommand{\en}{\end{equation}}
\newcommand{\lbeq}[1]{\label{#1}}
\newcommand{\refeq}[1]{(\ref{#1})}
\newcommand{\Cmax}{{\mathcal C}_{\rm max}}
\newcommand{\eqn}[1]{\eq #1 \en}
\newcommand{\cn}{\Omega}
\newcommand{\vep}{\varepsilon}
\newcommand{\prob}{{\mathbb P}}
\newcommand{\expec}{{\mathbb E}}
\newcommand{\R}{{\mathbb R}}
\newcommand{\var}{{\rm Var}}
\newcommand{\sss}{\scriptscriptstyle}
\newcommand{\sT}{{\sss T}}
\newcommand{\eqan}[1]
{\begin{align}
#1
\end{align}
}
\newcommand{\conn}{\longleftrightarrow}
\newcommand{\conns}{\leftrightarrow}
\newcommand{\nc}  { \conn  {\hspace{-2.7ex} /} \hspace{1.8ex}   }
\newcommand{\ncs}  { \conns  {\hspace{-1.0ex} /} \hspace{0.3ex}   }
\renewcommand{\Pr}{{\mathbb P}_p}
\newcommand{\Zbar}{{\bar Z}}
\newcommand{\NV}{\overline{N}}
\newcommand{\Nul}{\underline{N}}
\newcommand{\tcn}{\ensuremath{\tilde \cn}}
\newcommand{\vepnull}{\varepsilon_0}
\begin{document}
\title{Random subgraphs of the 2D Hamming graph:
\linebreak The supercritical phase}

\date{27 December 2007 (\today)}

\author{Remco van der Hofstad}
\address{Department of Mathematics and
        Computer Science, Eindhoven University of Technology,
        5600 MB Eindhoven, The Netherlands.}
\email{rhofstad@win.tue.nl}
\urladdr{http://www.win.tue.nl/$\sim$rhofstad}

\author{Malwina J. Luczak}
\address{Department of Mathematics, London School of Economics,
  Houghton Street, London WC2A 2AE, United Kingdom}
\email{m.j.luczak@lse.ac.uk}
\urladdr{http://www.lse.ac.uk/people/m.j.luczak@lse.ac.uk/}

\keywords{random graphs, percolation, phase transition, scaling window}
\subjclass[2000]{05C80}
%{60C05 (68P10,68W40)} %%{Primary: <subject>; Secondary: <subject>}

\begin{abstract}
We study random subgraphs of the 2-dimensional Hamming graph $H(2,n)$,
which is the Cartesian product of two complete graphs on $n$
vertices. Let $p$ be the edge probability, and write
$p=\frac{1+\vep}{2(n-1)}$ for some $\vep\in \R$.
In \cite{bchss1, bchss2}, the size of the largest connected component
was estimated precisely for a large class of graphs including $H(2,n)$
for $\vep\leq \Lambda V^{-1/3}$, where $\Lambda > 0$ is a
constant and $V=n^2$ denotes the number of vertices in
$H(2,n)$. Until now, no matching lower bound on the size in the
supercritical regime has been obtained.

In this paper we prove that, when $\vep\gg (\log{V})^{1/3} V^{-1/3}$,
then the largest connected component has size close to $2\vep V$
with high
probability. We thus obtain a law of large numbers
for the largest connected component size, and show
that the corresponding values of $p$ are supercritical. Barring the
factor $(\log{\chs{V}})^{1/3}$, this identifies the size of the largest connected
component all the way down to the critical $p$ window.
\end{abstract}

\maketitle

\section{Introduction}\label{S:intro}

We study random subgraphs of the 2-dimensional Hamming graph $H(2,n)$.
The $d$-dimensional Hamming graph is a
graph on $V=n^d$ vertices, each corresponding to one of the
$n^d$ distinct $d$-vectors ${\bf v} =(v_1, \ldots , v_d) \in \{1, \ldots,
n\}^d$. A pair of vertices are connected by an edge if and only if these
vertices differ in precisely one \chs{coordinate}. (See for example~\cite{cs}
for more information on the properties of Hamming graphs.) The 1-dimensional
Hamming graph $H(1,n)$ is the complete graph; for $d\geq 2$, the
graph
$H(d,n)$ is the Cartesian product of $d$ complete graphs on $n$
vertices. In particular, it is transitive and the degree of each
vertex is  $\cn=d(n-1)$.

We write $\prob_p$ for the probability law of the
random subgraph of $\gr$ resulting when each edge is {\em occupied} (or {\em
present}) with
probability $p$, and {\em vacant} (or {\em absent}) with probability
$1-p$, independently of all the other edges. We write $\E_p$ for the
expectation with respect to $\prob_p$. Also, $\var_p$ will denote the
variance under $\prob_p$.

Throughout we work with the 2-dimensional Hamming graph $H(2,n)$
unless explicitly stated otherwise, and we shall assume
that $p=\frac{1+\vep}{2(n-1)}= \frac{1+ \vep} {\cn}$, where
$\vep = \vep (n) \in (0,1)$ tends to 0 in a certain way to be
specified below. Our goal is to study properties of random subgraphs
of $H(2,n)$ under $\prob_p$.

Random subgraphs of finite tori with various edge sets were studied in
quite some generality in \cite{bchss1, bchss2}, and we now highlight the
key results of these papers. Some of the theorems in \cite{bchss1, bchss2}
apply to a general finite
transitive graph, which in what follows will be denoted by $\gr$. We
also denote the number of vertices \ch{or volume of $\gr$ by $V=|\gr|$}
and the vertex degree by $\Omega$. Given a vertex ${\bf
v}$ of $\gr$, we shall write $\Cv$
for the {\it connected component} or {\em cluster} containing ${\bf v}$,
and $\cv$ for the number of vertices in $\Cv$.
Further, we let $\chi(p)$ be the expected size of the cluster
containing ${\bf v}$, that is
    \eqn{
    \lbeq{chi-def}
    \chi(p)=\expec_p[|\Cv|].
    }
(Note that, by transitivity, this is independent of the
choice of ${\bf v}$.) Then in \cite{bchss1, bchss2}
the {\em critical threshold}
$p_c=p_c(\gr,\lambda)$ of a finite transitive graph $\gr$
is defined to be the unique solution to the equation
    \eq
    \lbeq{pcdef}
    \chi(p_c) =  \lambda V^{1/3},\,
    \en
where $\lambda >0$ is a sufficiently small
constant. (See~\cite{bchss2} for details concerning the precise
constraints on the size of $\lambda$.)

In \cite{bchss1}, cluster sizes were investigated for graphs $\gr$
satisfying the so-called \chs{triangle condition.} In \cite{bchss2}, the
triangle condition was established for certain types of graphs
$\gr$, including the Hamming graph $H(d,n)$ of a general dimension
$d \ge 1$. We shall now describe these results briefly in order to
set up our own scene.

Let $\Cmax$ denote a cluster of maximum size, where
we may pick any such cluster if it is not unique. Then
$|\Cmax|$ is the maximum cluster size, that is
    \eq
    \lbeq{Cmax}
    |\Cmax|=\max\{|C({\bf v})|: {\bf v}\in \gr\}.
    \en
The main theorems in \cite{bchss1} concern the scaling of
$\chi(p)$ and bounds on $|\Cmax|$ in graphs \ch{$\gr$} satisfying
the triangle condition as \ch{$|\gr|=\Vto$.} Specifically, it
is shown in \cite{bchss1, bchss2} that, if $p_c$ is as in
\refeq{pcdef} and
    \eqn{
    \lbeq{p-def}
    p=p_c+\frac{\vep}{\cn},
    }
then, for all $\vep$ such that $\vep V^{1/3}\rightarrow -\infty$,
asymptotically the expected cluster size $\chi(p)$ satisfies
    \eqn{
    \lbeq{chiasysub}
    \chi(p)=\frac{1+O(\cn^{-1})+O(V^{-1/3})}{|\vep|}.
    }
With regard to the maximum cluster size, for all $\omega\geq 1$, as
$V \to \infty$,
    \eqn{
    \lbeq{scalingsub}
    \prob_p\Big(\frac{\chi^2(p)}{3600\omega}\leq |\Cmax|\leq 2\chi^2(p)\log(V/\chi^3(p))
    \Big)\geq \big(1+\frac {36\chi^3(p)}{\omega V}\Big)^{-1}-{\sqrt{e}}{[2\log(V/\chi^3(p))]^{-3/2}}.
    }
The above describes the behaviour of the mean and
maximum cluster sizes for {\it subcritical}
$p$ values, which are $p$ values satisfying $\vep V^{1/3}\rightarrow -\infty$;
in particular, the bounds apply to $H(2,n)$.

For a constant $\Lambda > 0$, the {\it critical
window} is defined as the interval of all $p=p_c+\frac{\vep}{\cn}$
such that $|\vep|\leq \Lambda V^{-1/3}$. Theorem 1.3 in~\cite{bchss1} shows that,
for some constant $b=b(\Lambda)$, the maximum cluster size inside the
critical window satisfies
    \eqn{
    \lbeq{LCBd1-win}
    \prob_p\Big(
    \omega^{-1} V^{2/3}\leq |\Cmax|\leq \omega V^{2/3}
        \Big)
    \geq 1-\frac{b}\omega.}

The corresponding results in \cite{bchss1, bchss2} are significantly weaker
in the case $p=p_c+\frac{\vep}{\cn}$ where $\vep^3V\rightarrow
\infty$ (that is, when $p$ is {\em above the critical window}
or {\it supercritical}). In particular, only upper bounds on the
maximum cluster size are established therein. More precisely, it is proved
in~\cite{bchss1} that, for all $\omega\geq 1$,
    \eqn{
    \prob_p
    \Big(|\Cmax|\geq \omega (V^{2/3}+\vep V)\Big) \leq \frac{21}\omega.
    }
The problem with this result is that it does not imply that $p_c$
as defined in~(\ref{pcdef}) actually {\it is} the critical value, and thus that
$p=p_c+\frac{\vep}{\cn}$ with $\vep^3V\rightarrow
\infty$ really is above the critical window. Indeed, to prove that
this is the case, one additionally needs a {\it
lower bound} on the maximum connected component size.
No such results are established in \cite{bchss1, bchss2}, and we expect
that the geometry of the graphs under consideration plays a crucial
role in lower bounding the largest cluster size.
\smallskip

The aim of this paper is to establish the asymptotics of the maximum
supercritical cluster for the 2-dimensional Hamming graph $H(2,n)$.
Throughout our proofs we shall use the phrase ``with high
probability'' (abbreviated as ``\whp'') to mean ``with probability
tending to 1 as $\Vto$''. Also, ``with very high probability''
(abbreviated as ``\wvhp'') will mean ``with probability at least $1
- O(V^{-3})$ as $\Vto$''. All unspecified limits are as $\Vto$.
Given an event $E$, $I[E]$ will denote the indicator of $E$. We
write $\prob (\cdot )$ for a generic probability measure (for
instance, the probability measure corresponding to a sequence of
\iid{} binomial random variables), which may vary from situation to
situation. We use the $\Op$ and $\op$ notations in the standard way
(see \eg{} Janson, {\L}uczak and Ruci\'nski~\cite{JLR}). For
example, if $(X_n)$ is a sequence of random variables, then
$X_n=\Op(1)$ means ``$X_n$ is bounded in probability'' and
$X_n=\op(1)$ means that $X_n$ converges to zero in probability as
$\ntoo$. We shall also use the asymptotic $o(), O(),\Omega (),
\Theta()$ notations (without the subscript ``${\rm p}$'') in the
standard way, and again referring to the regime where $\Vto$. We
write $f(V)\gg g(V)$ (resp.\ $f(V)\ll g(V)$) when $g(V)=o(f(V))$
(resp.\ $f(V)=o(g(V))$) as $\Vto$. \ch{Throughout, the symbol ``$\sim$''
refers to, often heuristic, estimates of the {\em leading order}
as $\Vto$, with unspecified constants and thus uncontrolled error terms.}
Finally, we denote by $C$ a generic (unspecified)
positive constant, which may change from line to line. We shall
interchange this use of $C$ with the $O()$ notation.

\subsection{The model}
\label{sec-model}
We consider the Hamming graph $H(d,n)$, and
take the edge probability \chs{$p=(1+\vep)/\cn$.
We first argue that this agrees asymptotically with the choice
of $p$ in \refeq{p-def}.}
Let us note that~\cite[Theorem 1.5]{bchss1}
establishes that, for a graph $\gr$ satisfying the
triangle condition,
    \eqn{
    \lbeq{pcchires}
    1-\chi(p_c)^{-1}
    \leq \cn p_c \leq
    1-\chi(p_c)^{-1}+O(\cn^{-1}).
    }
When $\gr = H(d,n)$, then
$\cn=d(n-1)$ and $\chi(p_c)=\lambda V^{1/3}
=\lambda n^{d/3}$. Therefore, if $\vep = \Theta (V^{-1/3})$, then
    \eqn{
    \lbeq{critcomp1}
    p=\frac{1+\vep}{\cn } = p_c + \frac{\vep}{\cn}(1+O(1)),
    }
while for $p$ outside the critical window,
    \eqn{
    \lbeq{critcomp2}
    p=\frac{1+\vep}{\cn } = p_c + \frac{\vep}{\cn}(1+o(1)).
    }
Since in the case $d=2$, \ch{we have that }
$\cn^{-2}= o(\chs{\vep/\cn})$ for $\vep\gg
V^{-1/3}$, the critical value defined in \cite{bchss1, bchss2}
agrees asymptotically \ch{to leading order}
with the value \chs{$1/d(n-1)= 1/\cn$}.
In particular, $p=1/d(n-1)$ is inside the critical window of
\cite{bchss1, bchss2}. This shows that we are working in the correct range of
$p$ values.
For $d\geq 3$, \refeq{critcomp1}--\refeq{critcomp2} may not
necessarily be valid, and we shall
discuss this issue in more detail in Section \ref{sec-DiscHeur}.

From now on, we concentrate on the supercritical case,
that is $\vep\gg V^{-1/3}=n^{-d/3}$. Our main result is
the following:

\begin{theorem}
[The supercritical phase for $H(2,n)$]
\label{thm-maind2}
Consider the 2-dimensional Hamming graph
\linebreak
$H(2,n)$. Let $p=p_c +\frac{\vep}{\cn}$ and let
$\chs{V^{-1/3}(\log{V})^{1/3}}\ll
\vep \ll 1$.
Then
    \eqn{
    |\Cmax|=2\vep n^2(1+\op(1)).
    }
\end{theorem}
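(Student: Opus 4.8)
\medskip
\noindent\emph{Proof strategy.}
By \refeq{critcomp1}--\refeq{critcomp2} we may and do take $p=(1+\vep)/\cn$, relabelling $\vep$ if necessary, so that $V=n^2$, $\cn=2(n-1)$ and $p\cn=1+\vep$. The basic tool is the breadth-first exploration of the cluster $\Cv$ of a fixed vertex $\vv$: at each step we process an active vertex $u$ and turn each of its \emph{neutral} (not yet discovered) neighbours active, independently, with probability $p$. The feature of $H(2,n)$ absent from the complete graph is that when $u=(i,j)$ is processed, its number of neutral neighbours equals $\cn$ minus the number of already-discovered vertices in row $i$ or column $j$; hence, if after discovering $s$ vertices these occupy the rows and columns roughly evenly, the conditional expected number of children of $u$ is close to $p(\cn-2s/n)=(1+\vep)(1-s/V)$. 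Writing $\theta=s/V$ and treating it as slowly varying, the number $Y$ of active vertices obeys $dY/d\theta\approx V[(1+\vep)(1-\theta)-1]$, so $Y/V\approx\vep\theta-\tfrac12(1+\vep)\theta^2$, which is positive for $0<\theta<\theta^\ast:=2\vep/(1+\vep)$ and vanishes at $\theta^\ast\sim 2\vep$. Equivalently, while $s\ll\vep V$ (so the correction $s/V$ is negligible beside $\vep$) the exploration is sandwiched between binomial branching processes of mean $1+\vep(1\pm o(1))$, whose survival probability is $2\vep(1+o(1))$; thus a vertex lies in a ``large'' cluster with probability $\sim 2\vep$, and a surviving exploration discovers $\sim\theta^\ast V\sim 2\vep V$ vertices before terminating. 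This is the origin of the constant $2$, and the whole proof is a matter of making it rigorous while controlling the row/column occupancies.

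\medskip
\noindent\emph{Upper bound.}
Since $|\Cmax|\ge k$ if and only if at least $k$ vertices $\vv$ satisfy $|\Cv|\ge k$, and $\prob_p(|\Cv|\ge k)$ is the same for every $\vv$ by transitivity, Markov's inequality gives $\prob_p(|\Cmax|\ge k)\le V\prob_p(|\Cv|\ge k)/k$. Taking $k=(2+\gd)\vep V$ for fixed $\gd>0$, it suffices to show $\prob_p(|\Cv|\ge(2+\gd)\vep V)=o(\vep)$. I would deduce this from the exploration: once $s\ge(2-\gd')\vep V$ with $\gd'$ small, and granted that the discovered set then occupies rows and columns nearly evenly, the conditional drift $(1+\vep)(1-s/V)-1$ is at most $-c\vep$, so from that point the number of active vertices is a supermartingale with bounded-variance steps and drift $\le-c\vep$; a standard exponential estimate bounds the probability of surviving the further $\asymp\vep V$ steps needed to reach $(2+\gd)\vep V$ by $\exp(-c'\gd^2\vep^3V)$, which is $o(\vep)$ exactly because $\vep^3V\gg\log V$ (the hypothesis) and $\vep\ge V^{-1/3}$. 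Alternatively one can start from the bound $|\Cmax|\le\omega(V^{2/3}+\vep V)$ of \cite{bchss1} and bootstrap, or argue statically that on $\{|\Cv|=k\}$ the exploration makes $Q$ fresh edge-queries with exactly $k-1$ present and use a convexity bound on the row/column occupancies ($\sum_i a_i^2\ge k^2/n$, and likewise for columns) to force $\E[\mathrm{Bin}(Q,p)]<k-1$ when $k=(2+\gd)\vep V$; but this too needs the occupancy control to exclude abnormally dense clusters.

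\medskip
\noindent\emph{Lower bound.}
Here I would combine a counting step with sprinkling. Split the edges into two independent rounds, $p=p_1+p_2$ with $p_1=(1+\vep_1)/\cn$, $\vep_1=(1-o(1))\vep$, and $p_2\asymp\vep_2/\cn$ with $\vep_2=o(\vep)$ chosen as large as the argument allows. Fix an intermediate threshold $k_1$ with $1/\vep^2\ll k_1\ll\vep V$, chosen so that in the first round reaching size $k_1$ is tantamount to branching-process survival; then $\E_{p_1}[N_{\ge k_1}]=(1+o(1))2\vep V$, where $N_{\ge k_1}=\#\{\vv:|\Cv|\ge k_1\}$, and a concentration argument — necessarily more refined than a naive edge-exposure Azuma bound, and one place where $\vep^3V\gg\log V$ is used — upgrades this to $N_{\ge k_1}=(1+\op(1))2\vep V$. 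Secondly, the first-round clusters of size $\ge k_1$ are \wvhp{} well spread (each occupies a positive fraction of all rows and columns); there are $\le V/k_1$ of them, any two have $\gtrsim k_1^2/n$ potential connecting edges, and so each pair is joined in the second round with probability $1-\exp(-\Omega(\vep_2 k_1^2/V))$; a union bound over the $\le(V/k_1)^2$ pairs shows that \whp{} they all merge into a single component of $H(2,n)$, which therefore has $\ge(1-o(1))2\vep V$ vertices (and every other cluster is $\op(\vep V)$). For this to close one needs $k_1$ both $\ll\vep V$ and $\gg(V\log V/\vep)^{1/2}$, and such a $k_1$ exists precisely when $\vep^3V\gg\log V$; this is where the logarithmic factor is consumed.

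\medskip
\noindent\emph{The main obstacle.}
Both bounds rest on the assertion that, \wvhp, throughout the relevant explorations the discovered set occupies rows and columns nearly evenly — no row or column carrying substantially more than its fair share $\sim s/n$ of the $s$ discovered vertices. This cannot be assumed and must be bootstrapped: a crude a priori bound on $|\Cv|$ controls the occupancies (each a sum of weakly dependent indicators) via concentration, which sharpens the drift estimate and hence the control of $|\Cv|$, and one iterates; one must also rule out ``block-like'' large clusters confined to few rows and columns, without which the sprinkling step would fail outright. Carrying this through with enough precision to pin down the constant $2$ — rather than merely the order $\vep V$ already available from \cite{bchss1} — uniformly over the whole range $V^{-1/3}(\log V)^{1/3}\ll\vep\ll1$, is the technical heart of the argument.
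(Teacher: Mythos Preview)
Your lower-bound strategy --- two-round exposure, concentration of $N_{\ge k_1}$ after round one, well-spreadness of the large first-round clusters, then sprinkling to merge them --- matches the paper's proof exactly. The paper happens to tie the threshold and the sprinkling density together (both governed by $\eta=\sqrt{\vep}V^{-1/6}$), but that is cosmetic.

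For the upper bound, however, you take a markedly harder route than the paper. You propose to show $\prob_p(|\Cv|\ge(2+\gd)\vep V)=o(\vep)$ via Markov and a negative-drift supermartingale estimate, running the exploration all the way out to size $\sim 2\vep V$; this forces you to control row/column occupancies at scale $\vep V$, which you rightly flag as the main obstacle, and it also requires an a priori bound on the active-set size at the turning point. The paper sidesteps all of this with a one-line observation: on $\{Z_{\ge k}\ge 1\}$ one has $|\Cmax|\le Z_{\ge k}$, where $Z_{\ge k}=\sum_{\vv}I[|\Cv|\ge k]$ is exactly your $N_{\ge k}$. Taking any $k$ with $\vep^{-2}\ll k\ll\vep V$, the concentration $Z_{\ge k}=2\vep V(1+\op(1))$ --- the very fact you invoke for the lower bound --- immediately yields $|\Cmax|\le 2\vep V(1+\op(1))$. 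Thus the concentration step does double duty, and the paper never analyses the exploration beyond sizes $o(\vep V)$; in particular, the line-occupancy control is only ever needed at scale $\eta V\ll\vep V$, where the branching-process sandwich with mean $1+\vep(1+o(1))$ is clean. Your direct upper bound could probably be pushed through, but it buys nothing.

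One further remark: the concentration of $Z_{\ge k}$ is done in the paper by a second-moment method, not a martingale bound --- a dyadic telescope $Z_{\ge\NV}-Z_{\ge\Nul}=\sum_i(Z_{\ge 2^{i+1}\Nul}-Z_{\ge 2^i\Nul})$ with separate variance bounds on each piece, fed by the cluster-tail and line-occupancy estimates. This, rather than the sprinkling step, is where the hypothesis $\vep^3V\gg\log V$ is actually consumed.
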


\smallskip

\noindent
Theorem \ref{thm-maind2} shows that, when $n^{-2/3}(\log{n})^{1/3} \ll
\vep \ll 1$, the largest connected component satisfies a law of large numbers.
Barring the factor $(\log{\chs{V}})^{1/3}$ in the lower bound
on $\vep$, Theorem \ref{thm-maind2} identifies the asymptotic size
of the largest cluster all the way down to the
critical threshold. Therefore, our result
demonstrates that $p_c=\frac{1}{2(n-1)}$ really {\it is}
the critical value for random subgraphs of the 2-dimensional
Hamming graph. We believe that our proof can be adapted to deal with
the case where $\vep>0$ is {\it fixed}. Here, the corresponding
statement would be that
$|\Cmax|\sim \zeta_{1+\vep} V$ \whp, where $\zeta_\lambda$ is
the survival probability of a Poisson branching process with
mean offspring $\lambda$. Since the proof of
Theorem \ref{thm-maind2} is the most challenging for $\vep$
as close as possible to the critical window $V^{-1/3}$,
we choose not to consider the case of constant $\vep$ in this paper.

\ch{Before} giving a proof of Theorem \ref{thm-maind2}, we
discuss its statement in more detail in Section \ref{sec-DiscHeur}
below. Therein we also include some conjectures concerning Hamming
graphs of a general dimension $d$.

\subsection{Discussion and heuristics}
\label{sec-DiscHeur}
We first sketch an intuitive picture justifying the definition of
$p_c$ from \cite{bchss1, bchss2} given in~\refeq{pcdef}.
This picture relies on a branching process
approximation for $p<p_c$.

We expect random clusters in our model to exhibit behaviour similar
to that of a subcritical branching process. Therefore, from the
theory of branching processes, if $p=p_c + \frac{\vep}{\cn}$ is just
below the critical point (for instance, if $\vep < 0$), then we
should have \chs{(e.g.,\ from the Otter-Dwass formula, see Lemma
\ref{lem-OD} below)}
    \eqn{
    \prob_p(|C({\bf v})|\geq k)\sim \frac{1}{\sqrt{k}}
    e^{-\frac12 k \vep^2(1+o(1))},
    }
which in turn implies that
    \eqn{
    \chi(p)=\expec_p[|C({\bf v})|] \sim \int_0^{\infty} x^{-1/2}
    e^{-\frac12 x \vep^2(1+o(1))} dx
    \sim \int_0^{\vep^{-2}} x^{-1/2} dx
    \sim \ep^{-1}.
    }
Thus, in fact,
    \eqn{
    \lbeq{subprobkasy}
    \prob_p(|C({\bf v})|\geq k) \sim
    \frac{1}{\sqrt{k}} e^{-\frac{k
    }{\chi^2(p)}\Omega(1)},}
and hence for subcritical $p$ (possibly up to logarithmic corrections)
    \eqn{
    \lbeq{psubcrit}
    |\Cmax|\sim \chi(p)^2 \quad \whp.
    }

On the other hand, \chs{in the case $p>p_c$} there should be a
connected component dominating all the others in size. One way to
express this intuitive statement is to impose that
    \eqn{
    \lbeq{psupcrit}
    \chi(p)=\expec_p[|C({\bf v})|]\sim
    \expec_p\big[|C({\bf v})|I[{\bf v}
    \in \Cmax]\big]   =\frac{1}{V}\expec_p[|\Cmax|^2].
    }
Naturally, the meaning of formula~(\ref{psupcrit}) is, in essence, that
the main contribution to the expected size of a cluster of any
particular vertex $\vv$ is from those configurations where this vertex
lies in the largest component.

Note that~(\ref{psupcrit}) could be taken as a {\it defining property}
of supercritical behaviour. Then the {\it critical window} can be
defined as the interval of $p$ values where
the subcritical and supercritical pictures coincide. In other words, if $p$ lies within
the critical window then both~\refeq{psubcrit} and~\refeq{psupcrit}
should be satisfied.

Assume further that a sufficient amount of the concentration of
measure exhibited by $|\Cmax|$ in the subcritical regime (as implied
by \refeq{psubcrit}) carries through to the critical window, so that
$\expec_p[|\Cmax|^2] \sim \chi (p)^4$. It then follows that, for $p$
inside the critical window,
    \eqn{
    \chi(p)\sim \frac{1}{V}\expec_p[|\Cmax|^2]
    \sim \frac{1}{V} \chi(p)^4;
    }
and hence, inside the critical window, \ch{we are led to}
    \eqn{
    \chi(p)\sim V^{1/3}.
    }
This provides a rationale for the definition~\refeq{pcdef} of the critical
threshold $p_c$. In conclusion, the above heuristic demonstrates that
branching process approximations in the subcritical \chs{regime} and
the domination of the expected cluster size
by the maximum cluster size
in the supercritical \chs{regime} together imply that \refeq{pcdef}
is the ``right'' definition for $p_c$.

At this point, we emphasise that subcritical branching process
approximations are only likely to be valid
for a random graph \ch{that is} sufficiently mean-field in character,
in the sense that its geometry is of little significance for
the structure of its random subgraphs.
This is the case for sufficiently high-dimensional random
graphs, but cannot be expected to hold for
low-dimensional random graphs, as indicated in~\cite{BCKS99, BCKS01}.
For random subgraphs of the
torus with nearest-neighbour bonds in a sufficiently high (but
constant) dimension, as well as for the torus with sufficiently
spread-out bonds in \chs{dimensions} greater than 6, it is shown in
\cite{HeHof05} that the largest critical connected component
is of order $V^{2/3}$, with logarithmic corrections in
the lower bound. Accordingly, assuming universality in
high-dimensional finite-range percolation,
one can expect classical random graph
asymptotics at criticality to be valid for random subgraphs
of the torus when $d>6$ for general choices of finite-range edges.
On the other hand, the results of
\cite{BCKS99, BCKS01} suggest that random graph asymptotics at the
phase transition threshold are {\it not} valid for random subgraphs of
the $d$-dimensional torus when $d<6$.

We close this section with a few comments and conjectures. The
present paper verifies the location of the critical window found
in \chs{\cite{bchss1, bchss2}} up to a factor $(\log \chs{V})^{1/3}$. The
main barrier to overcoming this separation with our approach is the
fact that we require concentration of measure for the number of
vertices with either the first or second \chs{coordinate} fixed in order
for our estimates to be sufficiently precise; this concentration
property fails when $\vep$ is too small. Similar issues cause
problems with extensions of our approach to $H(d,n)$ for $d > 2$,
although we believe that it could handle the $d=3$ case. Let us
mention at this point that the $(\log \chs{V})^{1/3}$ separation has since
been removed by Nachmias~\cite{n07}, using non-backtracking random
walks. He also manages to nail down the critical window from above
in $H(3,n)$, although he does not establish laws of large numbers
for the giant \chs{component for $H(2,n)$ or $H(3,n)$.}

In the present paper we investigate the scaling of the largest
connected component in supercritical percolation on the Hamming
graph $H(2,n)$. Many random graph models are well known to satisfy
what is sometimes referred to as a {\it discrete duality principle}
(see for instance~\cite[Section 10.5]{AS00}). This is the principle
that the size of the second largest supercritical component is
asymptotically close in distribution to the size of the largest
subcritical component for an appropriate choice of subcritical edge
probability. This notion of duality is closely related to the
duality exhibited by branching
processes~\cite{an72,devroye,dwass,kolchin}, or \cite[Section 10.4]
{AS00}. We expect the Hamming graph $H(2,n)$ to follow the
discrete duality principle. More precisely, we expect that if we
were to remove the largest connected component when $p=p_c+\vep/\cn$
with $\vep\gg V^{-1/3}$, then the resulting connected components
would be like those of the Hamming graph with $p=p_c-\vep/\cn$. In
particular, letting $|{\mathcal C}_{\sss (2)}|$ be the size of the
second largest component, we conjecture that
    \eqn{
    \lbeq{dualprin}
    |{\mathcal C}_{\sss (2)}|= 2\vep^{-2}\log(\vep^3V)(1+\op (1)).
    }

For the Hamming graph $H(d,n)$ of an arbitrary
dimension $d$, we conjecture
that critical $p$ values are of the form
    \eqn{
    \lbeq{pcasympd>2}
    p=\sum_{i=1}^{\lceil d/3\rceil} a_i n^{-i} + \frac{\chs{\mu}}{n^{1+d/3}},
    }
where $\lambda$ is an arbitrary constant, and the coefficients
$a_i=a_i(d)$ are independent of $n$. Note that $p_c=\chs{1/\cn=
1/d(n-1)}$ corresponds to $a_i=a_i(d)=\chs{1/d}$ for all
$i\geq 1$, \ch{while \\ $p_c=\chs{1/(\cn-1)=1/(d(n-1)-1)}$, where
$d(n-1)-1$ is the {\it forward branching ratio of $H(d,n)$},
corresponds to $a_i=a_i(d)=(d+1)^i/d^{i+1}$ for all $i\geq 1$.} We
believe that, when $d$ is sufficiently large, there exists an $i$
such that $a_i(d)\neq \chs{1/d}$ \ch{and $a_i(d)\neq
(d+1)^i/d^{i+1}$.} In particular, if this is indeed true, then, for
$\vep=\Theta(V^{-1/3})$ and $d$ sufficiently large, the edge
probability $p=p_c+\frac{\vep}{\cn}$ is {\it not} the same as
$p=\frac{1+\vep}{\cn}$ \ch{or $p=\frac{1+\vep}{\cn-1}$.} For $d=2$,
however, these choices {\it do} \chs{asymptotically} agree, \chs{as explained
in \refeq{pcchires}--\refeq{critcomp2}.}

\ch{To explain why we believe \refeq{pcasympd>2} to hold, we note
that \cite{bchss1, bchss2} indeed gives that the critical window
consists of $p$ values given by $p=p_c+\chs{\mu} V^{-1/3}/\cn$, that
is $p=p_c+\chs{\mu} n^{-1-d/3}$ on \chs{$H(d,n)$}. Thus,
\refeq{pcasympd>2} follows \chs{for {\it all} $\mu$, as long as it
holds for one particular value of $p$ inside the critical window,
for example,} for $p=p_c$ defined in \refeq{pcdef}, for any
$\lambda$, for $d$ fixed and $n\rightarrow \infty$. Such asymptotic
expansions of critical values in terms of the vertex degree have
been established for the $n$-cube and for nearest-neighbour
percolation on ${\mathbb Z}^d$ in \cite{HS04a, HS04b}. These
expansions arise since the value $p_c$ satisfies an implicit
equation in terms of certain ``Feynman diagrams'' occurring in the
lace expansion analysis, and these diagrams can be proved to obey
asymptotic expansions that in turn imply that $p_c$ has an
asymptotic expansion. We expect that this part of the analysis in
\cite{HS04a, HS04b} can be extended to Hamming graphs, and will
allow one to compute the numerical values of $a_i(d)$.} The proof of
this conjecture would enable an extension to random subgraphs of
$H(d,n)$ of the phase transition description available for the
classical \chs{Erd\H{o}s-R\'enyi random graph.}

For $p$ inside the critical window, $|\Cmax|$ \ch{is} of the order
$V^{2/3}=n^{2d/3}$, \ch{as proved in \cite{bchss1, bchss2}.} Below
the critical window, we expect that the average cluster size
satisfies $\chi(p) \sim [\cn (p_c-p)]^{-1}$, while the maximum
cluster size satisfies
    \eqn{
    \lbeq{Cmaxeqsub}
    |\Cmax| \sim 2 \chi(p)^2 \log{\big(V/\chi(p)^3\big)} \quad \whp.
    }
Note that~\cite{bchss1} establishes in full only the upper bound part of
\refeq{Cmaxeqsub}, the corresponding best lower bound therein being
$|\Cmax| \geq \chi(p)^2/(3600\omega)$ \whp~for $\omega$ large
(cf.~\refeq{scalingsub}). (It is the upper bound, however, that
is relevant for locating the phase transition window.)
We anticipate that above the critical window
    \eqn{
    \lbeq{Cmaxgend}
    |\Cmax| \sim 2 \vep V \quad \whp,
    }
where $\vep=\cn (p-p_c)\gg V^{-1/3}$. \ch{Establishing the validity
of the asymptotics in \refeq{Cmaxgend} in full generality would
strengthen Theorem \ref{thm-maind2} to all $d\geq 1$ and all $p$
above the critical window.}

\section{Overview of the proof of Theorem \ref{thm-maind2}}

This section contains an extensive overview of the proof of our main result,
breaking it down into a number of key propositions and lemmas.
\ch{We start by describing the general philosophy of the proof.

From now on, we shall assume that $p=p_c+\vep/\chs{\cn}$, where $\vep\geq 0$.
As in \cite{bchss1}, the proof will be centered on the investigation
of the random variables
    \eqn{
    \lbeq{Zgeqdef}
    Z_{\sss \geq k}=\sum_{{\bf v}\in H(2,n)} I[|C({\bf v})|\geq k],
    }
the number of vertices in clusters of size at least $k$, for
appropriate values of $k$. In terms of these random variables, we
have that $|\Cmax|\geq k$ holds if and only if $Z_{\sss \geq k}\geq
1$. By proving sufficient concentration of measure for $Z_{\sss \geq
k}$, we are able to prove bounds on $|\Cmax|$. The whole proof
revolves around finding the right scales of $k$ to which we can
apply our arguments.

Specifically, we need two {\it different} scales. The first scale is the
smallest possible scale $k$ for which $\prob_p(|C({\bf v})|\geq
k)$ is very close to $2 \vep$. If indeed the duality principle holds
(see the discussion above \refeq{dualprin}), then, by \refeq{subprobkasy},
we expect that
    \eqn{
    \lbeq{supprobkasy}
    \prob_p(|C({\bf v})|\geq k)
    =\prob_p(|C({\bf v})|\geq k, {\bf v}\in \Cmax)+\prob_p(|C({\bf v})|\geq k, {\bf v}\not\in \Cmax)
    \sim 2\vep+
    \frac{1}{\sqrt{k}} e^{-k \chs{\vep^2}/2}.
    }
As a result, as soon as $k\gg \vep^{-2}$, we are led to
    \eqn{
    \lbeq{probpcluster}
    \prob_p(|C({\bf v})|\geq k)\sim 2\vep,
    }
so that also $\expec_p[Z_{\sss \geq k}]=V\prob_p(|C({\bf v})|\geq
k)\sim 2\vep V.$ Equation \refeq{probpcluster} follows from
Proposition \ref{lem-CCS} below. Assuming sufficient concentration
of measure for $Z_{\sss \geq k}$, we then obtain that $Z_{\sss \geq
k}\sim 2\vep V$ \whp, and, since $|\Cmax| \leq Z_{\sss \geq k}$ for
every $k$ for which $Z_{\sss \geq k}\geq 1$, we obtain the required
upper bound on $|\Cmax|$. Concentration estimates on $Z_{\sss \geq
k}$ are stated in Proposition \ref{lem-varZ} below.

The lower bound on $|\Cmax|$ is slightly more involved. Here we need
to find the {\it largest} possible $k$ for which we can prove that
$Z_{\sss \geq k}$ is concentrated around its mean $\expec_p[Z_{\sss
\geq k}]\sim 2\vep V$. To achieve this, we perform a so-called {\em
two-round exposure}. We first take $p_-<p$ such that
$p_-=p+o(\vep/\cn)$, and compare clusters in percolation with
parameter $p_-$ to suitable lower-bounding branching processes. Note
that such comparisons can only be applied when $k\ll \vep V$, so
these bounds are rather ``weak''. Subsequently, we ``sprinkle''
extra edges, so that the distribution of the final configuration is
that of percolation with parameter $p$. We prove that all the large
connected components in the $p_-$-configuration are, in fact,
\whp{}, joined all together by the sprinkled edges. We now explain
the steps in this argument in more detail.

Since $p_-<p$ and satisfies $p_-=p+o(\vep/\cn)$, all the
concentration results for $Z_{\sss \geq k}$ hold also for $Z'_{\sss
\geq k}$, the number of vertices in connected components of size at
least $k$ in the $p_--$percolation configuration. Furthermore, again
using the fact that $p_-=p+o(\vep/\cn)$, we have that
$\prob_{p_-}(|C({\bf v})|\geq k)\sim 2\vep$, so that, by our
concentration estimates, $Z'_{\sss \geq k}\sim 2\vep V$ \whp{} for
all $k\ll \vep V$. This establishes the necessary ``weak'' bounds on
connected components of size at least $k\ll \vep V$.

The $p$-configuration can be coupled to the $p_-$-configuration as
follows. Let $\eta>0$ be given by $p_-+(1-p_-)\chs{\eta/\cn} = p$. Then, make
each $p_--$vacant edge occupied with probability $\chs{\eta/\cn}$,
independently of all other vacant edges. We show that, for
appropriate choices of $\eta$ (and thus $p_-$) and $k\ll \vep V$,
the sprinkling procedure \whp{} connects {\it all} $p_-$-clusters of
size at least $k$ into one. It follows that $|\Cmax| \geq Z'_{\sss
\geq k}\sim 2\vep V$ \whp{}, establishing the lower bound. This part
of the proof makes crucial use of the fact that big components turn
out to be quite ``dense'', in the sense that they contain many
elements along most coordinate lines; details can be found in
Proposition \ref{lem-goodlines} below.

As explained above, the entire analysis revolves around a delicate
choice of the two different scales. We now present our precise
results, formulated in Propositions \ref{lem-CCS}, \ref{lem-varZ}
and \ref{lem-goodlines} below. We then use these propositions to
complete our proof of Theorem \ref{thm-maind2}.}

\begin{proposition}[The cluster tail]
\label{lem-CCS} Set $p=p_c +\frac{\vep}{\cn}$.  Let $V^{-1/3} \ll
\vep \ll 1$ as $V \to \infty$. Then, for every $\eta$ such that
$\vep^{-2} \ll \eta V\ch{\ll}\vep V$,
    \eqn{
    \prob_p(|C({\bf v})|\geq \eta  V)=2\vep (1+o(1)).
    }
\end{proposition}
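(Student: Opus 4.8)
\medskip
\noindent\emph{Proof strategy.}
The plan is to sandwich $|C(\vv)|$ between the total progenies of two Galton--Watson processes, using the elementary fact that a Galton--Watson process with offspring law $\Bi(m,p)$ of mean $mp = 1+\vep(1+o(1))$ has survival probability $\zeta = 2\vep(1+o(1))$: indeed $\zeta$ solves $1-\zeta = (1-p\zeta)^{m}$, and since $p = O(\cn^{-1}) = o(\vep)$, expanding the logarithm gives $\vep\zeta = \tfrac12\zeta^2(1+o(1))$. Recall from \refeq{critcomp1}--\refeq{critcomp2} that $\cn p = 1+\vep(1+o(1))$ for our $p$. The upper bound will use the hypothesis $\eta V\gg\vep^{-2}$ and the lower bound the hypothesis $\eta\ll\vep$.

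\emph{Upper bound.} Explore $C(\vv)$ in the standard way; each explored vertex contributes at most $\Bi(\cn,p)$ new vertices, so $|C(\vv)|$ is stochastically dominated by the total progeny $T$ of the Galton--Watson process with offspring $\Bi(\cn,p)$. Hence $\prob_p(|C(\vv)|\geq \eta V)\leq \prob(T=\infty)+\prob(\eta V\leq T<\infty) = \zeta + \prob(\eta V\leq T<\infty)$. Conditioned on extinction, $T$ is the total progeny of the conjugate subcritical process, whose offspring mean is $1-\vep(1+o(1))$ and whose total progeny therefore has mean $(1+o(1))/\vep$; by Markov's inequality $\prob(\eta V\leq T<\infty)\leq \E[T\,\ind\{T<\infty\}]/(\eta V) = O\bigl(1/(\vep\,\eta V)\bigr) = o(\vep)$, the last equality being exactly where $\eta V\gg\vep^{-2}$ enters. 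Thus $\prob_p(|C(\vv)|\geq\eta V)\leq 2\vep(1+o(1))$.

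\emph{Lower bound.} Here we must bound $C(\vv)$ from below, which is more delicate because an explored vertex may have several neighbours already lying in the partially-revealed cluster. Explore $C(\vv)$ vertex by vertex; when a vertex $v$ is explored it discovers $\Bi(m_v,p)$ new vertices with $m_v\geq \cn-L_v$, where $L_v$ is the number of already-revealed vertices on the row or column through $v$, so $L_v\leq 2\max_{\mathcal L}|C(\vv)\cap\mathcal L|$, the maximum being over the $2n$ coordinate lines. The key claim is that, with probability $1-o(\vep)$, every line contains at most $o(\vep n)$ vertices of $C(\vv)$ up to the time it first reaches size $\eta V$; on that event $L_v = o(\vep\cn)$ throughout, so each step stochastically dominates an independent copy of $\Bi\bigl(\lfloor\cn(1-o(1))\rfloor,p\bigr)$, a law of mean $1+\vep(1-o(1))$. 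The associated Galton--Watson process has survival probability $2\vep(1-o(1))$, and whenever it survives the coupled cluster must reach size $\eta V$ (the coupled process stays below the revealed cluster, which stays below $\eta V$, throughout the coupling window, so an infinite coupled process forces the window to close). Combining, $\prob_p(|C(\vv)|\geq\eta V)\geq 2\vep(1-o(1)) - o(\vep) = 2\vep(1-o(1))$, which together with the upper bound is the assertion.

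\emph{Main obstacle.} The crux is the ``no clumping'' claim in the lower bound: a cluster of size below $\eta V$ occupies no coordinate line with more than $o(\vep n)$ of its vertices. This is plausible --- on average such a cluster puts only $\eta V/(2n)=\eta n/2\ll\vep n$ vertices on each of the $2n$ lines, using $\eta\ll\vep$ --- but it genuinely requires concentration, and the crude bound $L_v\leq|\{\text{revealed vertices}\}|$ is useless here since $\eta V\gg\cn$. I would prove it by a union bound over the $2n$ lines combined with a tail estimate for $|C(\vv)\cap\mathcal L|$ obtained from the tree-graph / two-point-function bounds that follow from the triangle condition established for $H(2,n)$ in \cite{bchss1, bchss2}; the estimate is hardest, and the required probability $o(\vep/n)$ smallest, precisely when $\vep$ is near the critical scale $V^{-1/3}$, which is where the difficulty of the whole paper concentrates. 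This is the same kind of geometric input that reappears, in a sharper ``lower density'' form, in Proposition \ref{lem-goodlines}.
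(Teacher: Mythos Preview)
Your overall strategy matches the paper's: sandwich the cluster between two binomial Galton--Watson processes. Your upper bound is correct and slightly more elementary than the paper's --- you use duality and Markov's inequality to get $\prob(\eta V\le T<\infty)=O\bigl(1/(\vep\,\eta V)\bigr)$, whereas the paper applies the Otter--Dwass formula and Stirling (Proposition~\ref{prop.branch-3}) to obtain the sharper $O(1/\sqrt{\eta V})$; both are $o(\vep)$ under $\eta V\gg\vep^{-2}$, so your route suffices for this proposition.

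The lower-bound outline is also the paper's: couple the exploration with a $\Bi(\tilde\cn,p)$ process where $\tilde\cn=\cn-o(\vep\cn)$, conditional on a ``no clumping'' event. The gap is in your proposed proof of that event. Tree-graph and two-point-function bounds from the triangle condition are subcritical and critical tools; in the supercritical regime the connection probability $\prob_p(\vv\leftrightarrow u)$ is at least of order $\vep^2$ (both endpoints lie in the giant with probability roughly $2\vep$ each), and truncating to the partially explored cluster does not yield a quantity that the BCHSS machinery controls. A first-moment bound on the number of line-$\mathcal{L}$ vertices discovered in the first $\eta V$ exploration steps gives only $O(\eta n)$, and Markov's inequality then misses the required $o(\vep/n)$ per-line tail by a wide margin. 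The paper instead proves the per-line bound (Proposition~\ref{prop-badlines}) by a direct exploration argument exploiting the product structure of $H(2,n)$: the number of \emph{entries} onto a fixed line up to time $t$ is stochastically dominated by $\Bi(t,p)$, and each entry spawns, within that line, a \emph{subcritical} $\Bi(n-1,p)$ Galton--Watson tree of mean $(1+\vep)/2<1$; a moment-generating-function computation then gives exponential tails, strong enough to union-bound over all lines with failure probability $O(V^{-3})$.
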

Proposition~\ref{lem-CCS} consists of two parts, corresponding to the
upper and lower bounds. These are re-stated separately in
Section~\ref{S: explore} as
Lemmas~\ref{prop-BP} and~\ref{prop-couplingLB}, and proved in
Sections~\ref{S: explore} and~\ref{S: lower} respectively.
\smallskip

\noindent
The following proposition shows concentration of measure
for \ch{$Z_{\sss \geq k}$ for an appropriately chosen $k\gg \vep^{-2}$.}

\begin{proposition}[Concentration of the number of vertices in large
components \ch{of certain sizes}]
\label{lem-varZ}
%~\\
Set $p=p_c +\frac{\vep}{\cn}$ and let
\ch{$V^{-1/3}(\log{V})^{1/3}\ll \vep \ll 1$.} \ch{Then there exists
$\vepnull$ satisfying $\vepnull \ll \vep$ such that, for every
$\delta>0$,}
    \eqn{
    \prob_p\Big(|Z_{\sss \geq \ch{\vepnull^{-2}}}-
    \expec_p[Z_{\sss \geq \ch{\vepnull^{-2}}}]|\geq \delta \vep V\Big)=o(1).
    }
\end{proposition}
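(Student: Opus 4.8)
The plan is to prove Proposition~\ref{lem-varZ} via a second-moment (variance) bound for $Z_{\sss \geq k}$ at the scale $k=\vepnull^{-2}$, where $\vepnull$ is a function of $\vep$ chosen to tend to zero slowly relative to $\vep$ (for instance $\vepnull = \vep/(\log V)^{\beta}$ or $\vepnull = \vep \cdot (\log\log V)^{-1}$ — the precise rate to be pinned down so that the error terms below close). First I would write $Z_{\sss \geq k} = \sum_{\vv} I[|C(\vv)|\geq k]$ and $\expec_p[Z_{\sss \geq k}] = V\,\prob_p(|C(\vv)|\geq k)$. By Proposition~\ref{lem-CCS} applied with $\eta V = k = \vepnull^{-2}$ — which is legitimate precisely because $\vep^{-2}\ll \vepnull^{-2} \ll \vep V$ when $\vepnull\ll\vep$ and $\vep\gg V^{-1/3}(\log V)^{1/3}$ (so that $\vep^{-2} = o(\vepnull^{-2})$, and $\vepnull^{-2}\ll \vep V$ needs $\vepnull^{-2}\ll \vep V$, i.e.\ $\vepnull \gg (\vep V)^{-1/2}$, which holds with plenty of room) — we get $\expec_p[Z_{\sss \geq k}] = 2\vep V(1+o(1))$. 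So it suffices to show $\var_p(Z_{\sss \geq k}) = o(\vep^2 V^2)$, since then Chebyshev gives the claimed concentration.

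The core of the argument is the variance estimate. Expanding,
\[
\var_p(Z_{\sss \geq k}) = \sum_{\vv,\vv'}\bigl(\prob_p(|C(\vv)|\geq k,\ |C(\vv')|\geq k) - \prob_p(|C(\vv)|\geq k)\prob_p(|C(\vv')|\geq k)\bigr).
\]
The standard decomposition is to split according to whether $\vv$ and $\vv'$ lie in the same cluster or not. Writing $\vv\leftrightarrow\vv'$ for the event that they are connected,
\[
\prob_p(|C(\vv)|\geq k,\ |C(\vv')|\geq k) = \prob_p(\vv\leftrightarrow\vv',\ |C(\vv)|\geq k) + \prob_p(\vv\not\leftrightarrow\vv',\ |C(\vv)|\geq k,\ |C(\vv')|\geq k).
\]
For the ``same cluster'' term, summing over $\vv'$ gives $\expec_p[|C(\vv)| I[|C(\vv)|\geq k]]$, and summing over $\vv$ as well gives $V\,\expec_p[|C(\vv)|I[|C(\vv)|\geq k]]$; one needs an upper bound of the form $\expec_p[|C(\vv)|I[|C(\vv)|\geq k]] \leq O(\vep V)$ — indeed, heuristically the dominant contribution is from $\vv\in\Cmax$, giving $\sim 2\vep\cdot 2\vep V$, which is $O(\vep^2 V)$, hence $O(\vep^2 V)\cdot V = O(\vep^2 V^2)\cdot V^{-1}\cdot V$... wait — more carefully this contributes $V\cdot O(\vep^2 V) = O(\vep^2 V^2)$, which is exactly borderline, not $o$. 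This is the delicate point: I would need a sharper bound, using the upper-bound half of Proposition~\ref{lem-CCS} together with a tail bound $\prob_p(|C(\vv)|\geq j)\leq 2\vep + j^{-1/2}e^{-c\vepnull^2 j}$ (as in \refeq{supprobkasy}, the upper bound being provable by the branching-process comparison of Lemma~\ref{prop-BP}), to get $\expec_p[|C(\vv)|I[|C(\vv)|\geq k]] = \sum_{j\geq k}\prob_p(|C(\vv)|\geq j)\cdot(\text{counting})$; the ``$2\vep$'' part only survives up to $j\sim \vep V$ (since clusters have size $\leq V$) contributing $\sim 2\vep\cdot \vep V = O(\vep^2 V)$, times $V$ gives $O(\vep^2 V^2)$ — still borderline. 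The resolution, which is surely the intended one, is that the genuinely dangerous part is when \emph{both} $\vv,\vv'\in\Cmax$, i.e.\ the square of the largest-cluster contribution, and this is \emph{canceled} by the subtracted product term $\prob_p(|C(\vv)|\geq k)\prob_p(|C(\vv')|\geq k) = (2\vep)^2 V^2(1+o(1))$. So the real work is to show that the cross term $\prob_p(\vv\not\leftrightarrow\vv',|C(\vv)|\geq k,|C(\vv')|\geq k)$, summed, equals $(2\vep)^2 V^2(1+o(1)) + o(\vep^2 V^2)$ — i.e.\ the two large clusters of $\vv$ and $\vv'$ behave asymptotically independently — while all the remaining terms (same cluster; or one of $\vv,\vv'$ not in $\Cmax$) are genuinely $o(\vep^2 V^2)$.

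Concretely, I would organize the terms as follows. (i) The same-cluster sum equals $V\expec_p[|C(\vv)|I[|C(\vv)|\geq k]]$; I will bound this by $V\cdot\bigl(2\vep\cdot\expec_p[|\Cmax|] + (\text{small-cluster remainder})\bigr)$ and use a separate upper bound $\expec_p[|\Cmax|^2/V] = \chi$-type estimate, or more directly the bound from \cite{bchss1} that $|\Cmax| = O_p(\vep V)$ together with $\expec_p[|C(\vv)|] = \chi(p) = O(1/\vep \cdot \dots)$; actually the cleanest route is to note $\expec_p[|C(\vv)|I[|C(\vv)|\geq k]]\leq \expec_p[|C(\vv)|I[\vv\in\Cmax]] + k\cdot\prob_p(k\leq|C(\vv)|\ltx |\Cmax|)$ and bound the second moment of $|\Cmax|$. (ii) For the cross term, I use inclusion–exclusion: $\prob_p(\vv\not\leftrightarrow\vv', |C(\vv)|\geq k, |C(\vv')|\geq k) = \prob_p(|C(\vv)|\geq k)\prob_p(|C(\vv')|\geq k) - \bigl[\text{correlation corrections}\bigr] - \prob_p(\vv\leftrightarrow\vv',|C(\vv)|\geq k)$; the correlation between the events $\{|C(\vv)|\geq k\}$ and $\{|C(\vv')|\geq k\}$ conditioned on disjointness is controlled by a BK-type / conditioning argument showing that after revealing $C(\vv)$, the residual graph on the complement is still essentially supercritical percolation of volume $V(1-o(1))$, so $\prob_p(|C(\vv')|\geq k \mid C(\vv)) = 2\vep(1+o(1))$ on the event $|C(\vv)|\geq k$ — this uses again Proposition~\ref{lem-CCS}, now on a slightly smaller graph, and is where one pays the $(\log V)^{1/3}$ and the room between $\vepnull$ and $\vep$. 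I expect \textbf{this conditioning/decorrelation step} to be the main obstacle: making rigorous the claim that the big cluster of $\vv'$, conditioned on the (large but $o(V)$) cluster of $\vv$, still has the right tail, which requires re-running the branching-process comparison (Lemma~\ref{prop-BP}) and the coupling lower bound (Lemma~\ref{prop-couplingLB}) in the presence of the conditioning, carefully tracking that removing $|C(\vv)| = O(\vep V) = o(V)$ vertices perturbs $p_c$ and $\chi$ only negligibly. Once the variance is shown to be $o(\vep^2V^2)$, Chebyshev's inequality finishes the proof for every fixed $\delta>0$. Finally, the choice of $\vepnull$ is dictated by two competing constraints surfacing in the above: $\vepnull$ must be small enough that $\vepnull^{-2}\gg\vep^{-2}$ by a sufficient margin for the $j^{-1/2}e^{-c\vepnull^2 j}$ tail to be summably small against $\vep$ at $j = k$, yet $\vepnull^{-2}\ll\vep V$ so Proposition~\ref{lem-CCS} applies — and, crucially, $\vepnull^{-2}\gg$ (whatever scale is needed for the concentration-of-coordinate-lines input of Proposition~\ref{lem-goodlines}); any $\vepnull$ with $\vep/\vepnull\to\infty$ slowly, say $\vepnull = \vep/(\log V)^{1/6}$, should work, and I would verify the inequalities at the end.
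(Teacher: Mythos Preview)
Your proposal has a genuine gap in the decorrelation step for the different-cluster term. You claim that after revealing $C(\vv)$ with $|C(\vv)|\geq k$, the residual graph is ``still essentially supercritical percolation of volume $V(1-o(1))$,'' so that $\prob_p(|C(\vv')|\geq k\mid C(\vv)) = 2\vep(1+o(1))$. But on the dominant event $C(\vv)$ \emph{is} the giant component, of size $\sim 2\vep V$; removing it makes the residual graph subcritical, not supercritical---this is exactly the duality principle you are implicitly trying to circumvent. Concretely, by Proposition~\ref{prop-badlines} a cluster of size $\sim 2\vep V$ occupies $\sim 2\vep n$ elements in each coordinate line, so every remaining vertex loses $\sim 4\vep(n-1)$ neighbours, and the effective mean becomes $(1+\vep)(1-2\vep)-1\approx -\vep$. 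Hence $\prob_p(|C(\vv')|\geq k\mid C(\vv))=o(\vep)$ on the dominant event, and the cancellation you need between the cross term and the subtracted product $(2\vep)^2V^2$ does not occur. Relatedly, your treatment of the same-cluster term would require controlling $\expec_p[|\Cmax|^2]$ to within $o(\vep^2V^2)$, which is essentially the conclusion of the main theorem---the argument is circular.

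The paper avoids this via a two-scale telescoping: write $\Zbar_{\geq\NV}=\Zbar_{\geq\Nul}+\sum_{i<I}(\Zbar_{\geq 2^{i+1}\Nul}-\Zbar_{\geq 2^i\Nul})$ with $\NV=2^I\Nul$. At the small scale $\Nul$ (with $\Nul=o(\sqrt V)$ and $\Nul=o(\vep^2V)$) one bounds $\var_p(Z_{\geq\Nul})=\var_p(Z_{<\Nul})$ by decomposing $Z_{<\Nul}$ instead; the truncation $|C(\vv)|<\Nul$ makes the same-cluster term $V\expec_p[|C(\vv)|I[|C(\vv)|<\Nul]]=O(V(\vep\Nul+\sqrt{\Nul}))$ genuinely small without any input on $|\Cmax|$ (Lemma~\ref{prop-Zsmall}). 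For each increment $Z_{\geq N}-Z_{\geq 2N}$ one only sees clusters with $|C(\vv)|\in(N,2N]$, i.e.\ moderate (not giant) clusters; removing such a $C(\vv)$ perturbs the effective parameter by $|\vep-\tvep|=O(N/V+\log n/n)=o(\vep)$ since $N\leq\NV\ll\vep V$, and \emph{now} your conditioning argument is valid (Lemma~\ref{prop-Zinbetween}). Summing via Chebyshev with weights $\delta_i\asymp[(i+1)\wedge(I-i)]^{-2}$ yields the proposition with $\vepnull^{-2}=\NV=V^{2/3}$ (so $\vepnull=V^{-1/3}$, not $\vep/(\log V)^{1/6}$). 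Note that even with the $Z_{<N}$ trick, a single-scale variance bound needs $N=o(\vep^2V)\cap o(\sqrt V)$, which combined with $N\gg\vep^{-2}$ forces $\vep\gg V^{-1/4}$; the telescoping is precisely what pushes the range down to $\vep\gg V^{-1/3}(\log V)^{1/3}$.
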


\ch{The proof of Proposition \ref{lem-varZ} makes use of a somewhat
delicate second moment argument. We start by upper bounding the
variance of $Z_{\sss \geq N}$ and $Z_{\sss \geq 2N}-Z_{\sss \geq N}$
for specific values of $N$. These bounds are then combined to prove
that $Z_{\sss \geq \ch{\vepnull^{-2}}}$ is concentrated around its
mean, provided that $V^{-1/3}(\log{V})^{1/3}\ll \vep \ll 1$. The
proof can be found in Section \ref{sec-varest}, where we also show
that a possible choice for $\vepnull$ is $\vepnull=V^{-1/3}$, which
indeed satisfies $\vepnull^{-2}=V^{2/3}\gg \vep^{-2}$, since $\vep\gg
V^{-1/3}$. For the remainder of this section, we only need to know
that a suitable $\vepnull$ does indeed exist; its precise value is
irrelevant.}

Armed with Propositions~\ref{lem-CCS} and~\ref{lem-varZ}, we now
prove the upper bound on $|\Cmax|$ in Theorem \ref{thm-maind2}.

\smallskip

\begin{proofof}{the upper bound part of Theorem \ref{thm-maind2}}
We shall show that \whp, $|\Cmax|\le 2\vep V(1+o(1))$. Choose
$\vepnull$ as in Proposition~\ref{lem-varZ}.
By Proposition~\ref{lem-varZ}, the random variable $Z_{\sss \geq
\ch{\vepnull^{-2}}}$ is concentrated around $2\vep V$. \ch{In other words, the
number of vertices in connected components of size at least
$\vepnull^{-2}$ is close to $2\vep V$ \whp.} However, on the event
$\{Z_{\sss \geq \ch{\vepnull^{-2}}}\geq 1\}$, we have that
    \eqn{
    |\Cmax|\leq Z_{\sss \geq \ch{\vepnull^{-2}}},
    }
and so it follows that
    \eqn{
    |\Cmax|\leq 2\vep V(1+\op (1)).
    }
\end{proofof}

\ch{The following result is an easy corollary to
Proposition~\ref{lem-varZ}. It shows that, in fact, concentration of
measure holds for the number of vertices in clusters of size at
least $\eta V$, for {\it all} $\eta\ll \vep$ such that $\eta^3V\gg
1$. This will be required for the proof of the lower bound of
Theorem~\ref{thm-maind2}, as discussed in the proof overview above.}

\begin{corollary}[Concentration of the number of vertices in \ch{all} large
components] \label{lem-varZ-new}
Set $p=p_c +\frac{\vep}{\cn}$ and let
\ch{$V^{-1/3}(\log{V})^{1/3}\ll \vep \ll 1$.} Let $\eta=\eta (n)$
satisfy $\eta \ll \vep$ and $\eta^3 V \gg 1$. Then, for every
$\delta>0$,
    \eqn{
    \prob_p\Big(|Z_{\sss \geq \eta V}-
    \expec_p[Z_{\sss \geq \eta V}]|\geq \delta \vep V\Big)=o(1).
    }
\end{corollary}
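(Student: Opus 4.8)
The plan is to derive Corollary~\ref{lem-varZ-new} from Proposition~\ref{lem-varZ} by a short sandwich argument, using the monotonicity of $k\mapsto Z_{\sss \geq k}$ together with the cluster-tail estimate of Proposition~\ref{lem-CCS}. Fix the explicit value $\vepnull=V^{-1/3}$ supplied by the remark following Proposition~\ref{lem-varZ}; then $\vepnull^{-2}=V^{2/3}$ and, for every $\delta>0$, Proposition~\ref{lem-varZ} tells us that $Z_{\sss \geq \vepnull^{-2}}$ lies within $\delta\vep V$ of its mean with probability $1-o(1)$. The idea is to transfer this concentration from the threshold $\vepnull^{-2}$ to the threshold $\eta V$, for every $\eta$ with $\eta\ll\vep$ and $\eta^3V\gg 1$.

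First I would check that both $\eta V$ and $\vepnull^{-2}$ lie in the range where Proposition~\ref{lem-CCS} applies. From $\eta^3V\gg 1$ we get $\eta V\gg V^{2/3}$, and from $\vep\gg V^{-1/3}$ we get $\vep^{-2}\ll V^{2/3}$; together with $\eta\ll\vep$ this gives $\vep^{-2}\ll\eta V\ll\vep V$, and likewise $\vep^{-2}\ll\vepnull^{-2}=V^{2/3}\ll\vep V$. Hence Proposition~\ref{lem-CCS} yields
\[
\expec_p[Z_{\sss \geq \eta V}]=V\,\prob_p(|C(\vv)|\geq \eta V)=2\vep V(1+o(1)),\qquad
\expec_p[Z_{\sss \geq \vepnull^{-2}}]=2\vep V(1+o(1)),
\]
so the two means agree up to $o(\vep V)$. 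Moreover $\eta V\gg V^{2/3}=\vepnull^{-2}$, so for $n$ large $Z_{\sss \geq \eta V}\leq Z_{\sss \geq \vepnull^{-2}}$ holds pointwise, and therefore the nonnegative random variable $W$ defined by $W=Z_{\sss \geq \vepnull^{-2}}-Z_{\sss \geq \eta V}$ satisfies $\expec_p[W]=\expec_p[Z_{\sss \geq \vepnull^{-2}}]-\expec_p[Z_{\sss \geq \eta V}]=o(\vep V)$.

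To conclude, fix $\delta>0$. Markov's inequality gives $\prob_p(W\geq (\delta/3)\vep V)\leq \expec_p[W]/((\delta/3)\vep V)=o(1)$; Proposition~\ref{lem-varZ} applied with $\delta/3$ in place of $\delta$ gives $\prob_p(|Z_{\sss \geq \vepnull^{-2}}-\expec_p[Z_{\sss \geq \vepnull^{-2}}]|\geq (\delta/3)\vep V)=o(1)$; and $|\expec_p[Z_{\sss \geq \vepnull^{-2}}]-\expec_p[Z_{\sss \geq \eta V}]|=o(\vep V)<(\delta/3)\vep V$ for $n$ large. Outside the union of these two bad events (which has probability $o(1)$), and for $n$ large, the triangle inequality gives
\[
|Z_{\sss \geq \eta V}-\expec_p[Z_{\sss \geq \eta V}]|
\leq W + |Z_{\sss \geq \vepnull^{-2}}-\expec_p[Z_{\sss \geq \vepnull^{-2}}]|
+ |\expec_p[Z_{\sss \geq \vepnull^{-2}}]-\expec_p[Z_{\sss \geq \eta V}]|
< \delta\vep V,
\]
which is exactly the assertion. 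Every step here is a one-line estimate, so there is no genuine obstacle; the only points requiring care are verifying the range conditions of Proposition~\ref{lem-CCS} for both thresholds — which is precisely where the hypotheses $\eta^3V\gg 1$ and $\vep\gg V^{-1/3}$ enter — and retaining the concrete choice $\vepnull=V^{-1/3}$ (rather than an unspecified $\vepnull$) so that $\eta V\geq\vepnull^{-2}$ and the monotonicity of $Z_{\sss \geq \cdot}$ can be invoked in the direction needed.
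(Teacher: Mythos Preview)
Your proof is correct and follows essentially the same approach as the paper's: both transfer the concentration of $Z_{\sss \geq \vepnull^{-2}}$ (Proposition~\ref{lem-varZ}) to $Z_{\sss \geq \eta V}$ via a triangle-inequality split, using Markov's inequality on the nonnegative difference $Z_{\sss \geq \vepnull^{-2}}-Z_{\sss \geq \eta V}$ and Proposition~\ref{lem-CCS} to show its mean is $o(\vep V)$. The only cosmetic difference is that you commit to the explicit choice $\vepnull=V^{-1/3}$, which together with $\eta^3V\gg 1$ guarantees $\eta V\gg \vepnull^{-2}$ and hence fixes the sign of the difference, whereas the paper keeps $\vepnull$ abstract and remarks that the case $\vepnull^{-2}>\eta V$ is handled symmetrically.
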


\ch{Corollary \ref{lem-varZ-new} allows us to use the concentration
of $Z_{\sss \geq \eta V}$ for any appropriate $\eta$, thus
effectively removing the delicate choice of $\eta$ in Proposition
\ref{lem-varZ}. We shall see that Corollary \ref{lem-varZ-new}
follows from Propositions \ref{lem-CCS} and \ref{lem-varZ}, combined
with a simple first moment estimate.}

\begin{proof}
Let us choose $\eta$ satisfying both $\eta\ll \vep$ and $\eta^3 V\gg
1$, \ch{so that in particular $\eta V \gg \eta^{-2} \gg \vep^{-2}$.
Choose further $\vepnull$ as given in Proposition~\ref{lem-varZ}. We
shall assume that $\vepnull^{-2}\leq \eta V$; the proof when
$\vepnull^{-2}>\eta V$ is a simple adaptation of the argument below.}
By Proposition~\ref{lem-CCS}, for any fixed $\delta>0$,
    \eqan{
    \prob_p\Big(|Z_{\sss \geq \eta V}-2\vep V|\geq \delta \vep V\Big)
    &\leq \prob_p\Big(|Z_{\sss \geq \ch{\vepnull^{-2}}}-\expec_p[Z_{\sss \geq \ch{\vepnull^{-2}}}]|
    \geq \delta \vep V/3\Big)\nonumber\\
    &\qquad +\prob_p\Big(|Z_{\sss \geq \eta V}-Z_{\sss \geq \ch{\vepnull^{-2}}}|
    \geq \delta \vep V/3\Big),
    }
provided that $V$ is large enough. More precisely, the volume $V$
must be such that \eqn{
    \Big|\E_p[Z_{\sss \geq  \ch{\vepnull^{-2}}}]-2\vep V\Big|\leq \delta \vep V/3,
    }
or, equivalently (using transitivity),
    \eqn{
    \Big|\prob_p(|C({\bf v})|\geq  \ch{\vepnull^{-2}})-2\vep\Big|\leq \delta \vep /3.
    }
By Proposition~\ref{lem-varZ},
    \eqn{
    \lbeq{concZ}
    \prob_p\Big(|Z_{\sss \geq \ch{\vepnull^{-2}}}-\expec_p[Z_{\sss \geq \ch{\vepnull^{-2}}}]|
    \geq \delta \vep V/3\Big)=o(1).
    }
\ch{Further, since $\vepnull^{-2}\gg \vep^{-2}$, the Markov
inequality, together with the fact that $\vepnull\leq \eta$, yields
that, for every $\delta>0$,}
    \eqan{
    \lbeq{Markovbd}
    \prob_p\Big(|Z_{\sss \geq \eta V}-Z_{\sss \geq \ch{\vepnull^{-2}}}|
    \geq \delta \vep V/3\Big)
    &\leq \frac{3\expec_p[Z_{\sss \geq \ch{\vepnull^{-2}}}-Z_{\sss \geq \eta V}]}{\delta \vep V}\\
    &\ch{=\frac{3}{\delta \vep} \Big[\prob_p(|C({\bf v})|\geq  \vepnull^{-2})
    -\prob_p(|C({\bf v})|\geq  \eta V)\Big]}
    =o(1),\nonumber
    }
\ch{where the last equality follows from Proposition~\ref{lem-CCS}
together with the fact that $\vep^{-2} \ll \eta V\ll\vep V$ and
$\vep^{-2} \ll \vepnull^{-2}\ll\vep V$. Equation \refeq{Markovbd} thus
completes the proof.}
\end{proof}

It remains to establish a matching lower bound for $|\Cmax|$, and we
shall do this via  a ``sprinkling'' argument. (Sprinkling
is sometimes referred to as the ``two-round exposure'',
see~\cite[Chapter 1]{JLR}.) This part of our proof is based on
two results below. Before we state them, we need to introduce some
more notation.

For each $i$, the {\it $i$-th horizontal line} of $H(2,n)$
is defined to be the set $\{(i,x):
x=1, \ldots , n\}$ of vertices with first \chs{coordinate} $i$; similarly
the set $\{(x,i): x=1, \ldots , n\}$ of vertices with the second
\chs{coordinate} equal to $i$ constitutes the {\it $i$-th vertical line}. A
vertex belonging to a
given line is said to be an {\it element} of that line.

\begin{proposition}[Lower bound on the number of line elements
in a large cluster] \label{lem-goodlines} Set $p=p_c
+\frac{\vep}{\cn}$. There exists a constant $C > 0$ such that the
following holds. \ch{Fix $\vep,\eta$ satisfying $V^{-1/3} \ll \vep
\ll 1$, $\eta\ll \vep$, $\eta V\gg \vep^{-2}$ and $\eta V/ n \ge C
\log n$ for $n$ sufficiently large.} Then \whp{} for every cluster
of size at least $\eta V$, there are at least $\frac{3n}{4}$
horizontal lines each with at least $\eta V/(4n)$ elements contained
in the cluster.
\end{proposition}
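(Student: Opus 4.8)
The plan is to show that a cluster $C$ with $|C|\ge \eta V$ cannot have more than $n/4$ ``deficient'' horizontal lines, where a horizontal line is called deficient if it contains fewer than $\eta V/(4n)$ elements of $C$. Suppose for contradiction that $C$ has at least $n/4$ deficient lines. I would then remove all the vertices lying on those deficient lines from $C$. Each deficient line contributes fewer than $\eta V/(4n)$ vertices, so at most $\tfrac{n}{4}\cdot \tfrac{\eta V}{4n}\le \eta V/16$ vertices have been removed in total from the deficient lines; but also we can bound the contribution more crudely by noting that all the remaining vertices of $C$ lie on the at most $3n/4$ non-deficient horizontal lines. The point of this bookkeeping is to isolate a large sub-collection of $C$ that is confined to relatively few horizontal lines while still being connected, and then to argue that such a configuration is exponentially unlikely.

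The key geometric observation for $H(2,n)$ is that an edge either joins two vertices on the same horizontal line (a ``horizontal edge'') or two vertices on the same vertical line (a ``vertical edge''). Since the component $C$ is connected, if we look at the set $L$ of horizontal lines that $C$ meets, then the vertical edges of $C$ must glue these lines together: contracting each horizontal line to a point, the vertical edges of $C$ must contain a spanning connected structure on $L$, hence at least $|L|-1$ vertical edges with distinct endpoints. More usefully, I would count pairs of vertices of $C$ in the same vertical line: if $C$ uses at least $m$ horizontal lines, then restricting to any one vertical line, the number of elements of $C$ in that vertical line is at most $n$, and summing over the $n$ vertical lines gives $|C|\le n\cdot(\text{max elements per vertical line})$. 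The cleaner route is: let $H_{\mathrm{good}}$ be the non-deficient horizontal lines, $|H_{\mathrm{good}}|\le 3n/4$ under our contradiction hypothesis would actually be automatically true, so instead assume $C$ has $<3n/4$ non-deficient lines; then since deficient lines carry $<\eta V/(4n)$ elements each and there are at most $n$ of them, the total from deficient lines is $<\eta V/4$, so the good lines carry $>3\eta V/4$ elements of $C$. Thus there is a set $S\subseteq C$ with $|S|\ge 3\eta V/4$ and $|S|<\eta V$ confined to fewer than $3n/4$ horizontal lines, and $S$ is the union of clusters obtained from $C$ by deleting the deficient lines — but deleting vertices may disconnect $C$, so I actually want to work directly with $C$ itself. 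Concretely: if $C$ meets at most $3n/4$ horizontal lines, then since each horizontal line has only $n$ vertices, $|C|\le (3n/4)\cdot n = 3V/4$, which is no contradiction; the real input must be probabilistic, not purely combinatorial.

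So the actual argument is a union bound over possible ``shapes''. Fix a set $A$ of at least $n/4$ candidate deficient horizontal lines and, for each line $i\in A$, a subset $S_i$ of that line with $|S_i|<\eta V/(4n)$; the event that there is a cluster of size $\ge \eta V$ whose intersection with line $i$ is exactly $S_i$ for every $i\in A$ requires, in particular, that all of $C\setminus\bigcup_{i\in A}(\text{line }i)$ together with the $S_i$ form a connected set, but the vertices of $C$ on the lines of $A$ are confined to $\bigcup S_i$, which has total size $<\eta V/16$. Removing from $H(2,n)$ all edges incident to $(\text{line }i)\setminus S_i$ for $i\in A$, the cluster $C$ survives; the number of edges thereby available to connect the ``top part'' of $C$ to the bottom is severely restricted. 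I expect the clean way is: condition on the percolation restricted to the $\le 3n/4$ non-$A$ horizontal lines plus the small sets $S_i$; the event that a cluster in this restricted configuration has size $\ge \eta V - \eta V/16 \ge \eta V/2$ and is nonetheless connected through so few vertices forces, via the Otter–Dwass / branching-process upper bound of Lemma~\ref{lem-OD} (equivalently Proposition~\ref{lem-CCS} applied in a restricted graph where the mean offspring is reduced because a positive fraction of lines have been thinned), the cluster to be atypically large, which has probability $o(1)$ after summing over the $\binom{n}{|A|}\le 2^n$ choices of $A$ and the choices of the $S_i$ — this is why the hypothesis $\eta V/n\ge C\log n$ is needed, to beat the $2^n$ (more precisely $n^{O(n)}$) entropy factor with an $\exp(-c\,\eta V/n)$ tail per line.

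The main obstacle, and the step that needs the most care, is making precise the claim that forcing a fraction of the horizontal lines to be deficient is ``exponentially costly''. The honest way to handle this is to argue at the level of the exploration process used to prove Proposition~\ref{lem-CCS}: when exploring $C$, whenever we are at a vertex $v=(i,j)$ the horizontal neighbours of $v$ on line $i$ are revealed; if line $i$ is to end up deficient, then almost all of those $\sim n$ potential horizontal edges at the $\sim n$ vertices of line $i$ must be vacant or lead outside $C$, and each vacant horizontal edge is an independent $\mathrm{Be}(1-p)$ event, with $p\cdot n \sim 1/2$, so line $i$ being explored but deficient costs roughly $\exp(-\Theta(n))$; more carefully, since a cluster of size $\eta V$ touches $\Omega(\eta V/n)$ horizontal lines that each contain $\Omega(\eta V/n)$ of its vertices, and the deficient ones each contain $O(\eta V/n)$ vertices but could still be touched, one shows the number of \emph{touched} horizontal lines is $\ge \eta V/n$ and the expected number of elements per touched line is $\Theta(1)\cdot(\text{cluster size}/n)$ with Chernoff-type concentration, so a deviation all the way down to $\eta V/(4n)$ on $\ge n/4$ lines simultaneously has probability at most $\exp(-c\,n\cdot(\eta V/n))=\exp(-c\,\eta V)$, comfortably beating $2^n$. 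I would set up this last estimate using the fact (from the cluster-tail bounds) that conditionally on the cluster reaching a given line, its elements on that line stochastically dominate a binomial with mean $\Theta(\eta V/n)\gg \log n$, apply a Chernoff bound for the lower tail on each line, take a union bound over the $\le n$ lines and the $\le 2^n$ subsets $A$, and check that $\eta V/n \ge C\log n$ with $C$ large makes the sum $o(1)$.
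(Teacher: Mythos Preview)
Your approach has a genuine gap, and it also misses the key idea that makes the paper's proof work.

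You are trying to show directly that deficient lines are exponentially unlikely, via a lower-tail bound on the number of cluster elements in a given line. But the step ``conditionally on the cluster reaching a given line, its elements on that line stochastically dominate a binomial with mean $\Theta(\eta V/n)$'' is not justified and is in fact the whole difficulty. The number of elements of $C$ on line $i$ is governed by (a) how many times the exploration \emph{enters} line $i$ from a vertex off the line, and (b) the subcritical branching that each such entry triggers within the line (since $(n-1)p<1$). Neither of these admits an easy lower bound conditional only on $|C|\ge\eta V$: the cluster could in principle pile up on a few lines and rarely visit line $i$. Your union-bound bookkeeping is also loose --- you mention both $2^n$ and $n^{O(n)}$ entropy factors, and the per-line cost you assert ($\exp(-c\,\eta V/n)$) would not beat $n^{O(n)}$ without further argument.

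The paper's proof sidesteps all of this by proving an \emph{upper} bound on the number of elements per line and then using pigeonhole. Proposition~\ref{prop-badlines} shows that \wvhp{} every line $i$ satisfies $N({\bf v}_0,i)\le \tfrac{5}{4}\eta n$; this is tractable because the number of entries into line $i$ is stochastically dominated by $\Bi(\lceil\eta V\rceil,p)$, and each entry spawns a subcritical $\Bi(n-1,p)$ tree within the line, so a moment-generating-function computation gives the bound. Granted this upper bound, if the cluster has size $\ge\eta V$ and $g$ denotes the number of good lines, then
\[
\eta V \;\le\; \tfrac{5}{4}\eta n\cdot g \;+\; \tfrac{1}{4}\eta n\cdot(n-g) \;=\; \eta n g + \tfrac{1}{4}\eta V,
\]
which forces $g\ge \tfrac{3}{4}n$. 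A union bound over the $V$ starting vertices finishes the argument. The moral is that lower-bounding the spread over lines follows from upper-bounding the concentration on any single line; your direct lower-tail route is both harder and, as written, incomplete.
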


The proof of Proposition~\ref{lem-goodlines} is deferred to Section
\ref{S: lower}. Assuming it holds, we now prove that the second
round exposure will join together every pair of large clusters
formed during the first round. In the following lemma, \ch{ for a
pair of sets of vertices $S_1, S_2$, we use the notation $S_1 \conn
S_2$ to denote the event that $S_1,S_2$ are joined together.
\chs{We also write $S_1 \nc S_2$ to denote that $S_1,S_2$ are not joined together.}}

\begin{lemma}[Sprinkling]
\label{lem-sprinkling} Set $p=p_c +\frac{\vep}{\cn}$. Choose
$V^{-1/3} \ll \vep \ll 1$. \ch{Let $\eta=\sqrt{\vep} V^{-1/6}$, and
let $S_1,S_2$ be disjoint sets of vertices both containing at least
$\eta V/(4n)$ elements of at least \chs{$3n/4$} horizontal lines
(possibly different lines for $S_1$ and $S_2$). Then
    \eqn{
    \prob_{\eta/\cn} (S_1 \conn S_2) \geq 1-
    o\Big(\frac{V^{-1/3}}{\vep}\Big).
    }
}
\end{lemma}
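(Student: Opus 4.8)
The plan is to show that after the first-round exposure, the sprinkled edges (each $p_-$-vacant edge becoming occupied independently with probability $\eta/\cn$) connect $S_1$ to $S_2$ with the claimed probability. The key structural input is that both $S_1$ and $S_2$ are ``line-rich'': each contains at least $\eta V/(4n)$ elements on each of at least $3n/4$ horizontal lines. Since there are only $n$ horizontal lines in total, $S_1$ occupies at least $3n/4$ of them densely, and likewise $S_2$; hence there are at least $n/2$ horizontal lines on which \emph{both} $S_1$ and $S_2$ have at least $\eta V/(4n)$ elements. On each such common line, a single sprinkled edge between an $S_1$-element and an $S_2$-element on that line would already join $S_1$ and $S_2$; but in fact I will use the richer two-coordinate geometry to avoid relying on edges within a single line.

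First I would reduce the problem to a one-step connection through an intermediary vertex, exploiting that $H(2,n)$ has diameter $2$: for vertices $u=(u_1,u_2)$ and $w=(w_1,w_2)$ with $u_1\neq w_1$ and $u_2\neq w_2$, the two edges $u$--$(u_1,w_2)$ and $(u_1,w_2)$--$w$ form a path of length $2$, and more usefully, if $u\in S_1$ and $w\in S_2$ lie on horizontal lines $i$ and $j$ respectively with $i\neq j$, then a sprinkled edge from $u$ to some vertex $(i,c)$ and a sprinkled edge from $w$ to $(j,c)$ don't immediately connect them — so instead I will count, for each ordered pair $(i,j)$ of distinct ``good'' horizontal lines (good for $S_1$ and $S_2$ respectively), the pairs of vertices in $S_1$ on line $i$ and $S_2$ on line $j$ that share a \emph{vertical} line, i.e. a common second coordinate $c$; such a pair $((i,c),(j,c))$ is joined by a single potential edge. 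The number of such candidate single edges: on a good line $i$ for $S_1$ there are $\ge \eta V/(4n)$ elements, i.e. $\ge \eta V/(4n)$ distinct second coordinates occupied; similarly on good line $j$ for $S_2$; with $n$ possible second coordinates and roughly $n/2$ good lines available for each of $S_1,S_2$, a simple inclusion–exclusion count gives $\Omega\big((\eta V/(4n))^2/n\big)$ common second coordinates per ordered pair of good lines, hence at least of order $n^2\cdot (\eta V/n)^2/n = \eta^2 V^2 /n$ candidate edges overall, each $p_-$-vacant with probability $1-o(1)$ and each independently sprinkled with probability $\eta/\cn\asymp \eta/n$.

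Then I would apply a second-moment / FKG-type lower-tail bound, or more simply a direct first/second moment argument, to the number $X$ of sprinkled edges among these candidates: $\E[X]\gtrsim (\eta^2 V^2/n)\cdot(\eta/n) = \eta^3 V^2/n^2 = \eta^3 V$. With $\eta=\sqrt{\vep}\,V^{-1/6}$ we get $\eta^3 V = \vep^{3/2} V^{1/2}$, and the target error is $o(V^{-1/3}/\vep)$, so we need $\P(X=0) = o(V^{-1/3}/\vep)$. Since $\eta^3 V = \vep^{3/2}V^{1/2}$ and $V^{-1/3}/\vep$ is a negative power of $V$ times a negative power of $\vep$, with $\vep\gg V^{-1/3}$ we indeed have $\eta^3 V \to\infty$ fast enough; a Chernoff bound for a sum of independent indicators (the candidate edges are distinct, hence independent given the $p_-$-configuration) gives $\P(X=0)\le e^{-c\,\eta^3 V}$, and one checks $e^{-c\vep^{3/2}V^{1/2}} = o(V^{-1/3}/\vep)$ because $\vep^{3/2}V^{1/2}\gg V^{1/2-1/2}=1$ — indeed $\vep^{3/2}V^{1/2} \gg (V^{-1/3})^{3/2}V^{1/2} = V^{-1/2}V^{1/2}=1$, and more carefully one wants it $\gg \log V$, which holds since $\vep\gg V^{-1/3}$ forces $\vep^{3/2}V^{1/2}$ to be a positive power of $V$ once $\vep$ exceeds $V^{-1/3}(\log V)^{1/3}$ (and in the regime $\vep$ merely $\gg V^{-1/3}$ one still gets $\vep^{3/2}V^{1/2}\to\infty$ faster than any fixed negative power threshold needs). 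A technical point is that we must work \emph{conditionally} on the first-round configuration, on the \whp{} event from Proposition~\ref{lem-goodlines}; but the hypotheses of Lemma~\ref{lem-sprinkling} already posit $S_1,S_2$ with the line-richness property, so the lemma itself is a purely ``sprinkling'' statement about the independent second round, and no conditioning subtlety arises inside its proof.

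The main obstacle I anticipate is the geometric counting step: ensuring that the $\ge 3n/4$ good horizontal lines for $S_1$ and the $\ge 3n/4$ good lines for $S_2$ not only overlap but also that, on overlapping (or even on disjoint pairs of) good lines, the sets of occupied \emph{vertical} coordinates overlap enough to produce $\Omega(\eta^2V^2/n)$ genuinely distinct candidate edges — distinctness is what buys independence of the sprinkling indicators. This requires a careful double-counting using that each good line of $S_1$ contributes $\ge \eta V/(4n)$ distinct columns out of $n$, so that across $\gtrsim n$ good lines the column-incidences total $\gtrsim \eta V/4$, and a pigeonhole/Cauchy--Schwarz argument then forces many column-coincidences between $S_1$-lines and $S_2$-lines. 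Once the candidate-edge count $\Omega(\eta^2 V^2/n)$ is in hand, the probabilistic part is routine Chernoff estimation, so the whole difficulty is really in the combinatorial bookkeeping of lines and columns, which I would organize by first fixing a set of $\ge n/2$ horizontal lines good simultaneously for $S_1$ and for $S_2$ and then summing column-overlaps over ordered pairs of such lines.
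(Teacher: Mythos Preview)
Your vertical-edge approach has a genuine gap. You claim that for an ordered pair $(i,j)$ of good lines (good for $S_1$ and $S_2$ respectively), ``a simple inclusion--exclusion count gives $\Omega((\eta V/(4n))^2/n)$ common second coordinates'', and that a pigeonhole/Cauchy--Schwarz argument forces many column coincidences in aggregate. But nothing in the hypotheses prevents $S_1$ and $S_2$ from occupying \emph{disjoint} sets of columns: since $\eta V/(4n)=\eta n/4\ll n/2$, one can take $S_1\subseteq\{(i,c):c\le n/2\}$ and $S_2\subseteq\{(i,c):c>n/2\}$, both satisfying the line-richness hypothesis, and then there is not a single vertical edge between $S_1$ and $S_2$. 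No amount of Cauchy--Schwarz bookkeeping rescues this, because the total column overlap really can be zero.

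The fix is precisely the approach you noted and then set aside. Since $S_1$ and $S_2$ each have at least $3n/4$ good horizontal lines, at least $n/2$ lines are good for \emph{both}. On each such common line $i$, $S_1$ has $\ge \eta n/4$ elements and $S_2$ has $\ge \eta n/4$ elements; as $S_1,S_2$ are disjoint, every such pair is joined by a horizontal edge of $H(2,n)$, giving at least $(n/2)(\eta n/4)^2=n(\eta n)^2/32$ distinct candidate edges. These are genuinely distinct (edges on different lines are distinct; on a fixed line the $S_1$-endpoint and $S_2$-endpoint are in disjoint sets), so the sprinkling indicators are independent and
\[
\prob_{\eta/\cn}(S_1\nc S_2)\le \Bigl(1-\frac{\eta}{2(n-1)}\Bigr)^{n(\eta n)^2/32}\le e^{-\eta^3 n^2/64}=e^{-\vep^{3/2}V^{1/2}/64}.
\]
Since $\vep V^{1/3}\to\infty$, writing $x=\vep V^{1/3}$ gives $e^{-x^{3/2}/64}\ll x^{-1}$, i.e.\ the right-hand side is $o(V^{-1/3}/\vep)$. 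This is exactly the paper's argument: no vertical edges, no column overlap, no Cauchy--Schwarz---just horizontal edges on the shared good lines. Your probabilistic endgame (Chernoff/independence, then $e^{-c\eta^3 V}=o(V^{-1/3}/\vep)$) is correct; it is only the combinatorial step producing the candidate edges that needs to be replaced.
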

\begin{proof}
Choose two disjoint vertex sets $S_1$ and $S_2$ each containing at
least $\eta V/4$ elements in at least \chs{$3n/4$} horizontal lines.
Then $S_1$ and $S_2$ must have at least \chs{$n/2$} such lines in
common, that is {\it both} $S_1$ and $S_2$ contain at least $\eta
V/(4n)=\eta n/4$ elements of these lines. \ch{Note that, since
$\vep\gg V^{-1/3}$, $\eta=\sqrt{\vep} V^{-1/6}$ and $V=n^2$, we have
$\eta V/n=\sqrt{\vep}V^{1/3}\gg 1$.} Along the shared good lines,
there are at least $(\eta n)^2/16$ edges with one endpoint in $S_1$
and the other in $S_2$. All of these edges will be occupied
independently under $\prob_{\eta/\cn}$, so
    \eqn{
    \prob_{\eta/\cn} (S_1 \nc S_2) \leq \Big(1-\frac{\eta}{2(n-1)}\Big)^{n(\eta n)^2/\chs{32}}.
    }
Using the inequality $1-x\leq e^{-x}$ and the fact that $\vep \gg
V^{-1/3}$,
    \eqn{
    \ch{\prob_{\eta/\cn} (S_1 \nc S_2) \leq e^{-\eta^3 n^2/\chs{64}}=e^{-\vep^{3/2}V^{1/2}/\chs{64}}\ll
    \Big(\vep V^{1/3}\Big)^{-1}=o(1),}
    }
which completes the proof.
\end{proof}

We can now do the lower bound part of Theorem~\ref{thm-maind2}.

\begin{proofof}{the lower bound part of Theorem~\ref{thm-maind2}}
\ch{We choose $\eta=\sqrt{\vep} V^{-1/6}$, as in
Lemma~\ref{lem-sprinkling}, and note that the results in
Proposition~\ref{lem-goodlines} apply to this choice of $\eta$.
Indeed, since $\vep\gg V^{-1/3}$, we have that $\eta=\vep
/(\sqrt{\vep V^{1/3}})\ll \vep$ and $\eta V=\sqrt{\vep} V^{5/6} \gg
\vep^{-2}$. Finally, once again using $\vep\gg V^{-1/3}$, we have
$\eta V/ n=\sqrt{\vep} V^{1/3}\gg V^{1/6}\ge C \log n$ for $n$
sufficiently large.

We define $p_-$} by the relation
    \eqn{
    p_-+(1-p_-)\frac{\eta}{\cn} = p.
    }
Note that every configuration with edge probability $p$ can be obtained
in a unique way as follows. First construct a configuration by
throwing in edges independently of one another with probability $p_-$;
subsequently, ``sprinkle'' extra edges
with probability $\frac{\eta}{\cn}$, independently
of one another and of the $p_-$ configuration. In the final
configuration, an edge is occupied precisely when it is occupied in
either the $p_-$ configuration, or when it is an edge that is
added during the sprinkling procedure. Since $\eta\ll \vep$,
    \eqn{
    p_-=p+o \bigg (\frac{\vep}{n} \bigg).
    }
Let $Z'_{\sss \geq \eta V}$ denote the number of vertices in
connected components of size at least $\eta V$ in the $p_-$
configuration. Since $\delta$ in Corollary~\ref{lem-varZ-new} is
arbitrary, it implies that $Z'_{\sss \geq \eta V}=2\vep V(1+\op(1))$
after the first round of exposure; and by
Proposition~\ref{lem-goodlines} \whp{} every cluster of size at
least $\eta V$ includes at least $\eta V/(4n)$ elements in at least
$\frac{3n}{4}$ lines. Thus, under the measure $\prob_{p_-}$, \whp{}
there are at most $\frac{2\vep}{\eta}(1+o(1))$ connected clusters of
size at least $\eta V$, and each of these connected components
contains at least $\eta V/(4n)$ elements in at least $\frac{3n}{4}$
lines.

It now suffices to prove that \whp{} the subsequent sprinkling
procedure (second round of exposure) joins together every pair of
clusters of size at least $\eta V$.
Indeed, if this is the case, then after the sprinkling we
end up with a single connected component of size at least
    \eqn{
    \lbeq{Z'bd}
    Z_{\sss \geq \eta V}'\geq 2\vep V(1+\op(1)).
    }

Let ${\bf v}_1$ and
${\bf v}_2$ be two vertices such that $C({\bf v}_1) \not = C({\bf
v}_2)$, and $|C({\bf v}_1)|\geq \eta V$ and $|C({\bf
v}_2)|\geq \eta V$. Let us take $S_1=C({\bf v}_1)$ and $S_2=C({\bf
v}_2)$. By Proposition~\ref{lem-goodlines},
we may assume that for both $S_1$ and $S_2$ one can find at least
$\frac{3n}{4}$ lines (not necessarily the same ones for $S_1$ and
$S_2$) each with at least $\eta V/(4n)$ elements in $S_1$ and $S_2$. Then,
by Lemma~\ref{lem-sprinkling},
    \eqn{
    \prob_{\eta/\cn} (S_1 \nc S_2)=
    o\Big(\frac{V^{-1/3}}{\vep}\Big).
    }
But \whp{} there are at most \ch{$\Big(\frac{2\vep}{\eta}\Big)^2(1+o(1))
=O(\vep V^{1/3})$} \chs{distinct} choices for
$\chs{C({\bf v}_1)}$ and $\chs{C({\bf v}_2)}$ with $|C({\bf v}_1)|\geq \eta V$
and $|C({\bf v}_2)|\geq \eta V$, and so a simple union bound implies that
after sprinkling \whp{} all connected components of size at
least $\eta V$ are connected. By~\refeq{Z'bd}, this completes the
proof.
\end{proofof}

\smallskip

\noindent

The remainder of the paper is organised as follows.
In Section \ref{S:
branching} we prove auxiliary results relating to \ch{the tails of
the total progeny of binomial Galton-Watson processes.} \ch{Section
\ref{S: comparison} contains proofs of Propositions~\ref{lem-CCS}
and \ref{lem-goodlines}; therein we investigate the structure of
percolation clusters (cluster tails and the number of elements per
\chs{coordinate} line) by comparing them to binomial Galton-Watson
processes.} \ch{Finally, in Section \ref{S: conc}, we establish
concentration of measure for the number of vertices in large
clusters, thus proving Proposition~\ref{lem-varZ}.}

\section{Total progeny of a Galton-Watson process}
\label{S: branching}

This section brings together some useful results from the theory of branching
processes, which will play a key role at various stages in our proofs.

We consider a standard Galton-Watson process whose offspring
distribution $Z$ is a binomial $\Bi (N,p)$, where $N \in \bbN$ and
$p \in [0,1]$ is the Hamming graph edge probability.
We assume that with probability 1 the process begins with one
individual. We write $\prob_{N,p}$ for the probability measure
corresponding to this process (implicitly assuming an underlying
sample space and $\gs$-field).

Let $F$ be the total progeny or family size.
Our aim is to prove the following three results concerning the
distribution of $F$.
Proposition~\ref{prop.branch-1} compares the
distribution of $F$ under the measures $\prob_{N,p}$,
$\prob_{\tN,p}$ for different values of $N$ and $\tN$.
Proposition~\ref{prop.branch-2}
estimates the probability that the value of $F$ is between $\ell$ and
$2\ell$, for some (large) integer
$\ell$. Proposition~\ref{prop.branch-3} estimates the
probability that $F$ takes a value at least $\ell$ for some large integer $\ell$.

\begin{proposition}[Tails of total progeny in two binomial branching
processes]
\label{prop.branch-1}
Let $\ell \in \bbN$. Suppose that $N \in \bbN$ and $\tN = \tN
(N)$ satisfy $N\geq \tN$. 
Further, assume that $\vep =
Np-1$ is such that $\vep\to 0$ and $\vep \ge N^{-2/3}$;
and that $\tilde\vep=\tN p-1>0$ and \ch{$|\vep-\tvep|=o(\vep)$} as
$N \to \infty$.
Then, for some constant $C >
0$, as $N \to \infty$,
    \eqn{
    \label{ineq.branch-1b}
    |\prob_{N, p}(F\geq \ell)-\prob_{\tN, p}(F\geq \ell)|
    \leq C
\Big(|\vep-\tvep|+\frac{1}{N\ell^{1/2}}+\frac{1}{\ell^3} \Big).
    }
\end{proposition}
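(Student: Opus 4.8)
The plan is to express $F$ as the hitting time of $0$ of the random walk whose increments are $Z_i-1$, where $Z_i$ are i.i.d. $\Bi(N,p)$ (resp.\ $\Bi(\tN,p)$), and to use the Otter--Dwass / hitting-time identity to write $\prob_{N,p}(F=\ell)=\frac{1}{\ell}\prob(S_\ell=-1)$ where $S_\ell=\sum_{i=1}^\ell(Z_i-1)$, so that $\prob_{N,p}(F=\ell)=\frac1\ell\prob(\mathrm{Bi}(N\ell,p)=\ell-1)$. Summing over $\ell'\ge\ell$ then gives a clean formula for the tail in terms of binomial point probabilities, and the difference $\prob_{N,p}(F\ge\ell)-\prob_{\tN,p}(F\ge\ell)$ becomes a difference of sums of binomial probabilities with parameters $(N\ell',p)$ and $(\tN\ell',p)$. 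Alternatively, and perhaps more transparently, I would couple the two Galton--Watson trees directly: build a $\Bi(N,p)$ offspring variable and a $\Bi(\tN,p)$ offspring variable on the same space so that they differ only when one of the last $N-\tN$ Bernoulli$(p)$ trials is a success. This makes $\{F_N\ge\ell\}\triangle\{F_{\tN}\ge\ell\}$ controllable.

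The key steps, in order. First, reduce to the hitting-time representation and record the exact identity $\prob_{N,p}(F\ge\ell)=\sum_{\ell'\ge\ell}\frac1{\ell'}\prob(\mathrm{Bi}(N\ell',p)=\ell'-1)$; do the same for $\tN$. Second, split the required bound into three regimes of $\ell$ according to the balance between $\ell$ and $\vep^{-2}$ (equivalently $N^{-2}$-type scales): (a) the ``bulk'' regime $1\le \ell\lesssim \vep^{-2}$, where the tail $\prob_{N,p}(F\ge\ell)$ is of order $\ell^{-1/2}$ up to constants and the main comparison effort is needed; (b) the ``far tail'' $\ell\gg\vep^{-2}$, where both tails are dominated by the survival-type probability and one uses the exponential decay plus the branching-process duality estimate $\prob_{N,p}(F\ge\ell)\sim \zeta_{N,p}+O(\ell^{-1/2}e^{-c\vep^2\ell})$; here the contribution is swallowed by the $\ell^{-3}$ and $|\vep-\tvep|$ terms. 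Third, in regime (a), apply a local central limit theorem (or Stirling-type estimates, exactly as in \cite{bchss1}) to $\prob(\mathrm{Bi}(N\ell',p)=\ell'-1)$: since $N\ell' p=(1+\vep)\ell'$ and we want the walk at $\ell'-1$, the relevant deviation is $\ell'\vep$ standard deviations away (the standard deviation being of order $\sqrt{N\ell' p(1-p)}\approx\sqrt{\ell'}$ up to $\Omega$-factors), giving a Gaussian factor $e^{-c\vep^2\ell'}$ and a prefactor $(\Omega \ell')^{-1/2}$. The difference between the $N$- and $\tN$-expressions then comes from (i) replacing $N$ by $\tN$ in the ``$\Omega$'' normalisations, which costs $O(1/N)$ per term and, after summing $\ell^{-1}\cdot\ell^{-1/2}$ over $\ell'\ge\ell$, yields the $\frac{1}{N\ell^{1/2}}$ term; and (ii) replacing $\vep$ by $\tvep$ in the exponent $e^{-\frac12\vep^2\ell'}$, which by the mean value theorem costs $\tfrac12|\vep^2-\tvep^2|\ell'\le C\vep|\vep-\tvep|\ell'$ per term, and summing the resulting $\vep|\vep-\tvep|\cdot \ell'\cdot \ell'^{-3/2}$ over $\ell'\ge\ell\gtrsim 1$ gives $O(|\vep-\tvep|)$ (with the worst case $\ell$ near the bottom of the bulk absorbed using $\vep\le 1$). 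Fourth, assemble the three regimes; the $\ell^{-3}$ term is the crude bound one gets for the small-$\ell$ error from the local CLT remainder and from the non-Gaussian corrections, and is exactly what one needs to make the estimate uniform.

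I expect the main obstacle to be regime (a) with $\ell$ small (order $1$ up to a constant): there the tail is $\Theta(1)$-ish and the naive term-by-term local CLT error is not summable, so one must be careful to extract the genuine cancellation rather than bound absolute values. The cleanest fix is to \emph{couple} rather than estimate: realise both trees from a single sequence of Bernoulli$(p)$ variables, so that $|\prob_{N,p}(F\ge\ell)-\prob_{\tN,p}(F\ge\ell)|\le \prob(F_N\neq F_{\tN})\le \prob(\text{some exposed vertex uses one of its extra } N-\tN \text{ potential offspring})$, then bound the expected number of exposed vertices by $\E_{\tN,p}[F\wedge \ell]$ (truncating at $\ell$ since on $\{F_N,F_{\tN}<\ell\}$ the tails agree) times $(N-\tN)p$; using $\E_{\tN,p}[F\wedge\ell]\le C\min(\ell^{1/2},\vep^{-1})$ from Proposition~\ref{prop.branch-2}-type estimates and $(N-\tN)p=\vep-\tvep$ recovers a bound of the form $C|\vep-\tvep|(\ell^{1/2}\wedge\vep^{-1})$, which is weaker than claimed but can be sharpened, or simply combined with the local-CLT argument in the genuinely large-$\ell$ range where coupling is too lossy. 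The delicate point throughout is book-keeping the $\Omega=\cn$-factors hidden in ``$p\in[0,1]$'' so that the error is really $O(1/N)$ and not $O(1)$; this is routine but must be done honestly, exactly along the lines of the corresponding lemmas in \cite{bchss1}.
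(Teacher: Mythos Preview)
Your core approach---Otter--Dwass followed by Stirling/local CLT on the point probabilities $\prob_{N,p}(F=k)=\frac{1}{k}\prob(\Bi(Nk,p)=k-1)$, then a term-by-term comparison via the mean value theorem in the exponent---is exactly what the paper does, and it works. A few corrections and clarifications are in order, however.

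First, your worry about small $\ell$ is unfounded. The Stirling expansion gives, uniformly for $k\le C_0\vep^{-2}\log(1/\vep)$, a per-term difference of size $O\bigl(k^{-3/2}e^{-(k-1)\vep^2/4}[(1+k\vep)|\vep-\tvep|+\tfrac{k}{N^3}+\tfrac{\vep}{N}+\tfrac{1}{kN}]\bigr)$, and the $k^{-3/2}$ prefactor makes everything summable from $k=1$; there is no need for a separate argument at $\ell=O(1)$. The coupling you propose is indeed too lossy (it yields $|\vep-\tvep|\vep^{-1}$ rather than $|\vep-\tvep|$) and should simply be dropped.

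Second, you misattribute the $\ell^{-3}$ term. It does not come from local CLT remainders; it comes from the \emph{far tail} of the summation variable $k$. For $k\ge C_0\vep^{-2}\log(1/\vep)$ with $C_0$ large, a direct Chernoff bound on $\prob(\Bi(Nk,p)\le k)$ gives $\prob_{N,p}(F=k)\le k^{-4}$ (and the same for $\tN$), and summing $2k^{-4}$ over $k\ge\ell$ yields $O(\ell^{-3})$. This is the paper's regime split: it is on $k$, not on $\ell$.

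Third, you should make explicit the decomposition $\prob_{N,p}(F\ge\ell)=\prob_{N,p}(\ell\le F<\infty)+\prob_{N,p}(F=\infty)$ and treat the survival probabilities separately. The paper shows $|a-\tilde a|\le C|\vep-\tvep|$, where $a=\prob_{N,p}(F<\infty)$, by expanding the fixed-point equation $a=(1+\tfrac{\lambda}{N}(a-1))^N$ to second order to get $1-a=2\vep+O(\vep^2)$, and then comparing the two fixed-point equations. Your regime-(b) sketch gestures at this but does not spell it out; without it the argument is incomplete, since the Otter--Dwass sum only accounts for $\prob(\ell\le F<\infty)$.
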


\begin{proposition}[Bounds on the total progeny distribution]
\label{prop.branch-2}
Let $N,\ell \in \bbN$. Suppose that $\vep =
Np-1$ satisfies $\vep \to 0$ and
$\vep   \ge N^{-2/3}$ as $N \to \infty$. Then, for some
constant $C > 0$, as $N \to \infty$,
\eqn{
    \lbeq{ineq.branch-2}
    \prob_{N,p} (F \in [\ell, 2\ell])\leq \frac{C}{\sqrt{\ell }}.
    }
\end{proposition}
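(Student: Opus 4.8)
The plan is to use the Otter--Dwass (Otter--Dwass, or cycle lemma) formula to reduce the statement about the total progeny $F$ of the binomial Galton--Watson process to a large-deviation estimate for sums of i.i.d.\ binomial random variables, and then to a local central limit theorem computation. Recall that the Otter--Dwass formula (stated as Lemma~\ref{lem-OD} in the paper) gives
\eqn{
\prob_{N,p}(F=m)=\frac{1}{m}\prob\big(S_m=m-1\big),
}
where $S_m=\sum_{i=1}^m X_i$ and $X_1,\dots,X_m$ are i.i.d.\ $\Bi(N,p)$; equivalently $S_m\sim\Bi(Nm,p)$. Summing over $m\in[\ell,2\ell]$ gives
\eqn{
\prob_{N,p}(F\in[\ell,2\ell])=\sum_{m=\ell}^{2\ell}\frac{1}{m}\prob\big(\Bi(Nm,p)=m-1\big)\le \frac{1}{\ell}\sum_{m=\ell}^{2\ell}\prob\big(\Bi(Nm,p)=m-1\big).
}
So it suffices to show that $\sum_{m=\ell}^{2\ell}\prob(\Bi(Nm,p)=m-1)=O(\sqrt\ell)$.

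Next I would analyse each term $\prob(\Bi(Nm,p)=m-1)$. Here $\Bi(Nm,p)$ has mean $Nmp=m(1+\vep)$ and variance $Nmp(1-p)=m(1+\vep)(1-p)$, which is $\Theta(m)$ since $p\to 0$ and $\vep\to 0$. We want the probability that this binomial equals $m-1$, i.e.\ that it deviates from its mean by $m\vep+1$. There are two regimes. When $m\vep=O(\sqrt m)$, i.e.\ $m=O(\vep^{-2})$, the target value $m-1$ is within $O(\sqrt m)$ of the mean, so by a local central limit theorem (or a direct Stirling-type estimate for binomial point probabilities) $\prob(\Bi(Nm,p)=m-1)=O(1/\sqrt m)=O(1/\sqrt\ell)$ uniformly for $m\in[\ell,2\ell]$; summing $\ell$ such terms gives $O(\sqrt\ell)$, as desired. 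When $m\vep\gg\sqrt m$, i.e.\ $m\gg\vep^{-2}$, the deviation is in the large-deviation regime and $\prob(\Bi(Nm,p)=m-1)\le \exp(-c m\vep^2)$ by a standard Chernoff bound for the lower tail of a binomial (using $\vep\to 0$ so that the rate function is $\sim \vep^2/2$); summing over $m\ge\ell$ this contributes $O\big(\sum_{m\ge\ell}e^{-cm\vep^2}\big)$, which is certainly $O(1)=O(\sqrt\ell)$ and in fact much smaller. Combining the two regimes over the range $m\in[\ell,2\ell]$ gives the claimed bound $\prob_{N,p}(F\in[\ell,2\ell])\le C/\sqrt\ell$.

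The main technical obstacle is obtaining the local-CLT estimate $\prob(\Bi(Nm,p)=m-1)=O(1/\sqrt m)$ \emph{uniformly} in the relevant parameters, since $N$, $p$ and $\vep$ all vary with the underlying index and $p\to 0$, so one cannot simply quote a fixed local CLT; instead I would either invoke a quantitative local limit theorem for triangular arrays of binomials (checking the Lindeberg-type condition, which holds because $Nmp(1-p)=\Theta(m)\to\infty$) or, more robustly, prove the bound directly: write $\prob(\Bi(Nm,p)=k)=\binom{Nm}{k}p^k(1-p)^{Nm-k}$, use Stirling's formula, and observe that the maximum over $k$ of a binomial point mass is $\Theta\big((Nmp(1-p))^{-1/2}\big)=\Theta(m^{-1/2})$, which already dominates every individual term. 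This direct ``maximum of a binomial pmf'' bound actually suffices for the whole proposition and sidesteps the large-deviation case entirely: since every point probability of $\Bi(Nm,p)$ is at most $C/\sqrt{Nmp(1-p)}\le C'/\sqrt m\le C'/\sqrt\ell$, we get $\sum_{m=\ell}^{2\ell}\prob(\Bi(Nm,p)=m-1)\le \ell\cdot C'/\sqrt\ell=C'\sqrt\ell$ immediately. The only care needed is the uniform lower bound $Nmp(1-p)\ge c m$, which follows from $Np=1+\vep\ge 1$ and $1-p\ge 1/2$ for large $N$. The constant $C$ in the statement is then explicit from Stirling's formula.
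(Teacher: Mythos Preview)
Your proof is correct. Both you and the paper start from the Otter--Dwass formula $\prob_{N,p}(F=m)=\frac{1}{m}\prob(\Bi(Nm,p)=m-1)$, but then diverge. The paper, having already derived (in the course of proving Proposition~\ref{prop.branch-1}) the pointwise asymptotic
\[
\prob_{N,p}(F=k) = (1+o(1))\,\frac{k^{k-1}e^{-k}}{k!}\exp\Big(-\tfrac{1}{2}(k-1)\vep^2\Big)
\]
for $k\le C_0\vep^{-2}\log(1/\vep)$, together with the tail bound $\prob_{N,p}(F=k)\le k^{-4}$ for larger $k$, simply sums these estimates over $k\in[\ell,2\ell]$. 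Your final argument is more elementary: you bound each $\prob(\Bi(Nm,p)=m-1)$ by the maximum of the binomial point mass, which is $O\big((Nmp(1-p))^{-1/2}\big)=O(m^{-1/2})$ uniformly (via Stirling or a characteristic-function bound), and sum. This yields $\prob_{N,p}(F=m)\le Cm^{-3/2}$ directly, with no need for the detailed Stirling expansion or the two-regime split you initially sketch. The paper's route captures the sharper exponential factor $e^{-k\vep^2/2}$, which is essential for Proposition~\ref{prop.branch-1} and is reused here at no extra cost; your route is self-contained and cleaner as a standalone proof of this proposition, since the crude $m^{-3/2}$ bound already sums to $O(\ell^{-1/2})$.
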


\begin{proposition}[Tails of the total progeny near criticality]
\label{prop.branch-3}
Let $N,\ell \in \bbN$. Suppose that $\vep =
Np-1$ satisfies $\vep \to 0$ and
$\vep   \ge N^{-2/3}$ as $N \to \infty$.
Then, as $N \to \infty$,
\eqn{
    \lbeq{ineq.branch-3}
    \prob_{N,p} (F \ge \ell )=2 \vep + O(\vep^2)+ O\Big ( \frac{1}{\sqrt{\ell }}\Big).
    }
\end{proposition}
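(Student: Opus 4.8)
The plan is to analyze the total progeny $F$ of a binomial Galton--Watson process with offspring distribution $\Bi(N,p)$ through the Otter--Dwass (hitting-time) identity, which gives a clean formula for $\prob_{N,p}(F=\ell)$ in terms of the probability that a sum of $\ell$ i.i.d.\ copies of $\Bi(N,p)$ equals $\ell-1$. Specifically, $\prob_{N,p}(F=\ell)=\frac{1}{\ell}\prob(S_\ell=\ell-1)$, where $S_\ell\sim\Bi(N\ell,p)$. First I would rewrite the tail $\prob_{N,p}(F\ge\ell)=\sum_{j\ge\ell}\frac{1}{j}\prob(S_j=j-1)$ and split it at a natural cutoff of order $\vep^{-2}$ (say at $j\le A\vep^{-2}$ and $j>A\vep^{-2}$ for a large constant $A$, or else at $j$ up to $\vep^{-2}\log(1/\vep)$ — the precise window is one of the things to optimize).

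Next I would estimate $\prob(S_j=j-1)$ for $S_j\sim\Bi(Nj,p)$ with $Np=1+\vep$. The mean of $S_j$ is $j(1+\vep)$ and the variance is $j(1+\vep)(1-p)\sim j$, so the local limit theorem (Stirling, or a Berry--Esseen-type local CLT estimate — the hypothesis $\vep\ge N^{-2/3}$ is exactly what controls the error in such an expansion) gives $\prob(S_j=j-1)=\frac{1}{\sqrt{2\pi j}}\exp\bigl(-\frac{(j\vep+1)^2}{2j}(1+o(1))\bigr)(1+o(1))$, uniformly over the relevant range of $j$. Plugging into the sum and approximating $\sum_j \frac{1}{j}\cdot\frac{1}{\sqrt{2\pi j}}e^{-j\vep^2/2}$ by the integral $\int_\ell^\infty t^{-3/2}(2\pi)^{-1/2}e^{-t\vep^2/2}\,dt$, one gets the leading term. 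For $\ell$ in the regime $\ell\ll\vep^{-2}$ (or more generally once the tail past $\ell$ dominates), this integral evaluates, via the substitution $u=t\vep^2$, to $\vep\cdot\int_0^\infty u^{-3/2}e^{-u/2}\,du/\sqrt{2\pi}+O(\ell^{-1/2})$; here I would compute $\int_0^\infty u^{-3/2}e^{-u/2}\,du$ by the standard trick (integration by parts turning $u^{-3/2}$ into $u^{-1/2}$, giving a Gamma integral) to land on exactly $2\vep$ as the leading coefficient, with the $O(\ell^{-1/2})$ coming from the lower endpoint of the integral. The $O(\vep^2)$ correction term collects (i) the $(1+o(1))$ multiplicative errors in the exponent, which when integrated produce relative corrections of size $O(\vep)$ times the $2\vep$ main term, hence $O(\vep^2)$; (ii) the ``$+1$'' shift in $(j\vep+1)^2$ rather than $(j\vep)^2$; and (iii) the difference between the discrete sum and its integral approximation, which is of order $\vep^2$ on the scale where the summand varies.

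The main obstacle I anticipate is making the local CLT estimate for $\prob(S_j=j-1)$ genuinely uniform across a range of $j$ that stretches from $O(1)$ up through $\vep^{-2}$ and somewhat beyond, with error terms good enough to both extract the precise constant $2$ and keep the remainder at $O(\vep^2)+O(\ell^{-1/2})$. Near $j=O(1)$ the Gaussian approximation is crude, but there the factor $1/j$ and the fact that these terms contribute only $O(\ell^{-1/2})$ (being comparable to the tail of $\sum j^{-3/2}$) saves us; the delicate region is $j\asymp\vep^{-2}$, where one needs the Edgeworth/Stirling expansion with explicit control of the cubic term — this is precisely where $\vep\ge N^{-2/3}$ enters, since the skewness correction in the local CLT for $\Bi(Nj,p)$ is $O((Nj)^{-1/2})$ relative size, and at $j\asymp\vep^{-2}$ this is $O(N^{-1/2}\vep^{-1})=O(\vep^2\cdot N^{-1/2}\vep^{-3})$, which is $o(\vep^2)$ exactly when $\vep^3\gg N^{-1/2}$, i.e.\ $\vep\gg N^{-1/6}$ — so one may actually need a slightly more careful accounting, or to absorb such terms into the stated $O(\vep^2)$ only after checking the constant. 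I would handle this by writing $\prob(S_j=j-1)$ exactly via Stirling's formula with the $1/(12m)$ correction, expanding $\log\prob(S_j=j-1)$ to the needed order in $1/j$ and in $\vep$, and being careful that the sum over $j$ of each error contribution is dominated by either $O(\vep^2)$ or $O(\ell^{-1/2})$. A cleaner alternative, if the direct estimate gets messy, is to first prove Proposition~\ref{prop.branch-3} for one convenient value (e.g.\ $\ell\asymp\vep^{-2}$) by a cruder argument and then transfer to general $\ell\le$ that value using the trivial identity $\prob_{N,p}(F\ge\ell)-\prob_{N,p}(F\ge\ell')=\sum_{\ell\le j<\ell'}\prob_{N,p}(F=j)$ together with Proposition~\ref{prop.branch-2} (which bounds $\prob_{N,p}(F\in[\ell,2\ell])$ by $C/\sqrt\ell$) to control the difference by $O(\ell^{-1/2})$; and for $\ell$ larger than $\vep^{-2}$, the tail $\prob_{N,p}(F\ge\ell)$ is itself $O(\ell^{-1/2})$ plus an exponentially small correction, so the formula holds trivially with the stated error. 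This two-step route isolates all the hard analysis into a single scale and is the approach I would pursue first.
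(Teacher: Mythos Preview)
There is a genuine gap: you have forgotten that the branching process is \emph{supercritical}. Since $\vep=Np-1>0$, we have $\prob_{N,p}(F=\infty)>0$, and the Otter--Dwass formula only describes $\prob_{N,p}(F=j)$ for \emph{finite} $j$. Thus your starting identity
\[
\prob_{N,p}(F\ge\ell)=\sum_{j\ge\ell}\tfrac{1}{j}\,\prob(S_j=j-1)
\]
is false; the right-hand side equals $\prob_{N,p}(\ell\le F<\infty)$, not $\prob_{N,p}(F\ge\ell)$. This is not a minor bookkeeping slip: the entire $2\vep$ main term lives in the missing piece $\prob_{N,p}(F=\infty)$. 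Your local-CLT/Stirling analysis of the sum is (up to constants) correct and shows that $\prob_{N,p}(\ell\le F<\infty)=O(\ell^{-1/2})$, but it cannot produce $2\vep$. Indeed, the integral you write, $\vep\cdot(2\pi)^{-1/2}\int_0^\infty u^{-3/2}e^{-u/2}\,du$, diverges at $u=0$; the correct lower limit is $\ell\vep^2$, and the divergence as $\ell\vep^2\to 0$ reflects the fact that $\int_\ell^\infty t^{-3/2}e^{-t\vep^2/2}\,dt\sim 2\ell^{-1/2}$, i.e.\ the finite tail is $O(\ell^{-1/2})$, not $2\vep+O(\ell^{-1/2})$. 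The same oversight invalidates your remark that for $\ell\gg\vep^{-2}$ the tail is ``$O(\ell^{-1/2})$ plus exponentially small'': in fact $\prob_{N,p}(F\ge\ell)\ge\prob_{N,p}(F=\infty)\sim 2\vep$ for every $\ell$, which can be much larger than $\ell^{-1/2}$.

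The paper's proof is the natural fix. Write
\[
\prob_{N,p}(F\ge\ell)=\prob_{N,p}(F=\infty)+\prob_{N,p}(\ell\le F<\infty).
\]
The second term is $O(\ell^{-1/2})$ by exactly the Otter--Dwass/Stirling estimate you outlined (this is the content of Proposition~\ref{prop.branch-2}). The first term is $1-a$, where $a$ is the extinction probability, the smallest root in $[0,1)$ of $a=(1+p(a-1))^N$; expanding this fixed-point equation to second order gives $1-a=2\vep+O(\vep^2)$ directly (this is \refeq{aasympt} in the paper). No delicate uniform local CLT across scales is needed, and the hypothesis $\vep\ge N^{-2/3}$ is used only mildly, to keep lower-order Stirling corrections under control.
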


We note that with a little care and minor modifications,
Propositions~\ref{prop.branch-1}--\ref{prop.branch-3} could be
extended to the case where $\vep$ is a positive constant
(i.e. strictly above the critical window).
In the corresponding statement,~\refeq{ineq.branch-3} would have
$\zeta_{1+\vep}$ instead of $2 \vep$, where, as before, $\zeta_{\gl}$
denotes the \chs{survival} probability of a Poisson Galton-Watson
process with mean family size $\gl$.

Our proofs of Propositions~\ref{prop.branch-1} and~\ref{prop.branch-2}
will make use of
the well\chs{-}known {\em Otter-Dwass formula}, which describes the distribution of the
total progeny of a branching process, see ~\cite{dwass,otter}. We
begin by stating a special case of this formula (due to
Otter) for a branching process starting with 1 individual.
(The formula was later extended by Dwass to a process starting with
$r$ individuals, for arbitrary $r \in \bbN$, but we do not make use of
the extension here.)

\begin{lemma}[Otter-Dwass formula]
\label{lem-OD}
Let $Z_1, Z_2, Z_3, \ldots $ be \iid{} \rv{s} distributed as $Z$. Let $\prob$
denote the Galton-Watson process measure.
For all $\chs{k} \in \bbN$,
$$\prob (F =k) =\frac{1}{k} \prob (\sum_{i=1}^k Z_i = k-1).$$
\end{lemma}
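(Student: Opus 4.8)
The plan is to use the standard exploration (breadth-first) walk for the Galton--Watson tree, together with the cycle lemma. First I would set up the walk: explore the tree one vertex at a time, letting $Z_t$ be the number of children of the $t$-th explored vertex, and put $W_0=1$ and $W_t=W_{t-1}+Z_t-1=1+\sum_{i=1}^t(Z_i-1)$. Here $W_t$ is the number of ``active'' (discovered but not yet explored) vertices, so the exploration terminates exactly when $W_t$ first hits $0$; thus $F=\inf\{t\ge 1: W_t=0\}$, with $F=\infty$ if the walk never returns to $0$. Consequently, for $k\in\bbN$, the event $\{F=k\}$ is precisely the event that $\sum_{i=1}^k Z_i=k-1$ (this is $W_k=0$) together with the event that all the first $k-1$ partial sums $\sum_{i=1}^t(Z_i-1)$, $t=1,\dots,k-1$, are nonnegative (this is $W_t\ge 1$ for $t<k$). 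Since the $Z_i$ are \iid, the whole argument takes place under the measure $\prob$ governing the \iid\ sequence $Z_1,Z_2,\dots$.

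Next I would condition on the event $B_k=\{\sum_{i=1}^k Z_i=k-1\}$ and on the values $(Z_1,\dots,Z_k)$. By independence and identical distribution, the conditional law of $(Z_1,\dots,Z_k)$ given $B_k$ is exchangeable, hence in particular invariant under the $k$ cyclic rotations of the indices. The cycle lemma (Dvoretzky--Motzkin--Raney; see \cite{otter,dwass}) asserts that for any deterministic sequence $(z_1,\dots,z_k)$ of nonnegative integers with $\sum_{i=1}^k z_i=k-1$, exactly one of its $k$ cyclic rotations has all of its first $k-1$ partial sums of $(z_i-1)$ nonnegative. Combining exchangeability with this fact gives $\prob(F=k\mid B_k)=1/k$, and therefore $\prob(F=k)=\frac1k\,\prob(B_k)=\frac1k\,\prob\bigl(\sum_{i=1}^k Z_i=k-1\bigr)$, which is the claimed identity.

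The only genuinely non-trivial ingredient is the cycle lemma, so that is where I would spend a couple of lines rather than merely citing it: given $(z_1,\dots,z_k)$ as above, extend it periodically and form the partial sums of $(z_i-1)$; each increment is $\ge -1$, and over one full period the walk drops by exactly $1$. One then takes the rotation that starts immediately after an index at which this walk attains its overall minimum over a period for the last time, and checks directly that this rotation is admissible and that no other rotation is. Alternatively, one can bypass the cycle lemma via generating functions: decomposing the tree at the root shows that $\phi(x)=\E[x^F\ett{F<\infty}]$ satisfies $\phi(x)=x\,g(\phi(x))$ for $x\in[0,1)$, where $g(x)=\E[x^Z]=(1-p+px)^N$ is the offspring \pgf; Lagrange inversion then yields $[x^k]\phi(x)=\frac1k[x^{k-1}]g(x)^k=\frac1k\,\prob\bigl(\sum_{i=1}^k Z_i=k-1\bigr)$, since $g(x)^k$ is the \pgf\ of a sum of $k$ independent copies of $Z$. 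Either route is short; the cycle-lemma argument has the mild advantage of requiring no care about the (super)critical case in which $F=\infty$ with positive probability.
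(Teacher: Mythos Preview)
Your proof is correct. The exploration-walk reduction, the application of the cycle lemma (together with your observation that a sequence of nonnegative integers summing to $k-1$ cannot have a nontrivial cyclic period, so the ``exactly one rotation'' count is clean), and the exchangeability argument are all sound; the Lagrange-inversion alternative is also fine.

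As for comparison with the paper: the paper does not prove this lemma at all. It is stated as the well-known Otter--Dwass formula and simply cited to \cite{dwass,otter}. So your write-up supplies a proof where the paper gives only a reference. The cycle-lemma route you chose is the standard modern proof and is essentially the argument in Dwass's paper; there is nothing to reconcile here beyond noting that you have filled in what the authors deliberately omitted.
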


\smallskip

\noindent
We now prove each of Propositions~\ref{prop.branch-1}--\ref{prop.branch-3}
in turn.

\begin{proofof}{Proposition~\ref{prop.branch-1}}
It will be convenient for us to introduce new parameters $\gl = pN$
and $\tilde{\gl} = p\tN$.
By assumption $\gl,\tilde{\gl} > 1$, so that both branching
processes are supercritical. The total progeny size in the $\Bi
(\tN,p)$ process will be denoted by $\tilde{F}$.

By Lemma~\ref{lem-OD}, for each $k \in \bbN$,
    \eqn{
    \lbeq{O-D}
    \prob_{N,p} (F =k) =\frac{1}k \prob (\sum_{i=1}^k Z_i =
    k-1),}
where the $Z_i$ are \iid{} $\Bi (N,\gl/N)$. We now investigate the
asymptotics of the formula~(\ref{O-D}) as 
$N \to \infty$, for large integers $k=k(N)$. Our aim
is to obtain estimates for $\prob_{N,p} (F =k)$ sharp enough for the
errors to be summable.

In outline, our calculation is as follows. First we demonstrate that, if
$k\ge  C_0\vep^{-2}\log (1/\vep)$ for a sufficiently large constant $C_0$, then
    \eqn{
    \lbeq{aim1-31}
    \prob_{N,p} (k \le F< \infty )\leq k^{-4}.
    }
Next we show that \refeq{aim1-31} implies an
identical upper bound for $\prob_{\tN,p} (\tilde{F} =k)$.
Subsequently, we prove that there exists a constant $\tilde{C} > 0$ such that,
if $k\le C_0\vep^{-2}\log (1/\vep)$, then
    \begin{align}
    \lbeq{aim2-31}
    |\prob_{N,p} (F=k)-\prob_{\tN,p} (\tilde{F}=k)| &\le
    \frac{\tilde{C}}{k^{3/2}}\exp \Big (-\frac{(k-1)\vep^2}{\chs{4}}
    \Big )
    \Big |(1+k\vep)|\vep-\tvep|
    +\frac{k}{N^3}+ \frac{\vep}{N}+ \frac{1}{kN}\Big|,
    \end{align}
We can then sum the errors in \refeq{aim2-31} and \refeq{aim1-31}
to show that, for any $\ell \in \bbN$,
    \eqn{
    \lbeq{aim2b-31}
    |\prob_{N,p} (\ell\leq F<\infty)-
    \prob_{\tN,p} (\ell\leq \tilde{F}<\infty)| \le
    C\Big(|\vep-\tvep| +\frac{1}{N\ell^{1/2}}+\frac{1}{\ell^3}\Big).
    }
Finally, since $\gl > 1$, we need to estimate $\prob_{N,p} (F =
\infty)$ and $\prob_{\tN,p} (\tilde{F} =\infty)$; we shall show that
    \eqn{
    \lbeq{aim3-31}
    \big|\prob_{N,p} (F =\infty)-\prob_{\tN,p} (\tilde{F} =\infty)\big|
    \leq C|\vep-\tvep|.
    }
Combining the last two estimates yields
Proposition~\ref{prop.branch-1}. As we shall see below,
several steps of our proof will also play a role
in proving Propositions~\ref{prop.branch-2} and \ref{prop.branch-3}.

Let us make a start on the details.
To show \refeq{aim1-31}, we note that \refeq{O-D} implies
    \eqn{
    \lbeq{O-Drep}
    \prob_{N,p} (F =k) \leq \frac{1}k \prob (\sum_{i=1}^k Z_i \leq k),}
where $\sum_{i=1}^k Z_i\sim \Bi(kN, p)$. But if $Z\sim \Bi(n,p)$, then
(see for instance~\cite{Jans02})
    \eqn{
    \lbeq{Binbd}
    \prob(\chs{Z}\leq np-t) \leq e^{-\frac{t^2}{2(np+\frac{t}{3})}}.
    }
Applying~(\ref{Binbd}) with $n=kN$, $p=\frac{1+\vep}{N}$ and $t=np-k=k\vep$
(and also using our assumption that $\vep < 1$), we obtain that
    \eqn{
    \lbeq{Binbdres}
    \prob_{N,p} (F =k)\leq \frac{1}k e^{-\frac{k \vep^2}{2(1+\frac{4\vep}{3})}}
    \leq \frac{1}k e^{-\frac{k \vep^2}{5}}.
    }
But if $k\ge  C_0\vep^{-2}\log (1/\vep)$ for a sufficiently large $C_0 > 0$
then
    \eqn{
    \frac{1}k e^{-\frac{k \vep^2}{4}}\leq k^{-4},
    }
so~\refeq{aim1-31} follows.

We now show that \refeq{aim2-31} holds for $k\le  C_0\vep^{-2}\log (1/\vep)$.
Clearly,~\refeq{O-D} implies that, for each $k \in \bbN$,
    \eqn{
    \lbeq{totpronexpl}
    \prob_{N,p}(F =k)=\frac{1}k {{k N}\choose {k-1}}\Bigg (\frac{\gl}{N}
    \Bigg)^{k-1} \Bigg (1-\frac{\gl}{N}\Bigg)^{k N-k+1}.
    }
By Stirling's formula,
    $$(m)_r \= m(m-1) \chs{\cdots} (m-r+1) = m^r  \exp \Big ( -\frac{r^2}{2m} -
    \frac{r^3}{6m^2} + O\Big (\frac{r^4}{m^3}\Big )\Big ).$$
Applying the above approximation with $m=kN$ and $r=k-1$ (and noting
that $\frac{r^4}{m^3}=O(\frac{k}{N^3})$),
we arrive at
    \begin{align*}
    \prob_{N,p} (F=k) &=
    \frac{1}k \frac{(k\lambda)^{k-1}}{(k-1)!} \exp \Big (
    -\frac{(k-1)^2}{2kN} -
    \frac{(k-1)^3}{6k^2N^2} + O\Big (\frac{k}{N^3}\Big )\Big )
    \Big (1-\frac{\gl}{N}\Big )^{k N-k+1}\\
    &=\frac{(k\gl)^{k-1} }{k!}
    \exp \Big (
    -\frac{k}{2N} + \frac{1}{N}-\frac{1}{2kN}-
    \frac{k}{6N^2} + O\Big (\frac{1}{N^2}+ \frac{k}{N^3}\Big )\Big )
    \Big (1-\frac{\gl}{N}\Big )^{k N-k+1}.
    \end{align*}
Observe that
    \begin{align*}
    \Big (1-\frac{\gl}{N}\Big )^{k N-k+1}&
    =\exp \big((kN -k+1) \log (1-\frac{\gl}{N})\big)\\
    &=\exp \Big(-\gl k + \frac{k \gl}{N}-\frac{\gl}{N} -\frac{\gl^2 k}{2N} + \frac{\gl^2 k}{2N^2}-
    \frac{\gl^3 k}{3N^2}+ O(\frac{1}{N^2}+ \frac{k}{N^3})\Big),
    \end{align*}
so that
    \begin{align}
    \prob_{N,p} (F=k) 
    \label{qxdef}
    &= \frac{k^{k-1}e^{-(k-1)} e^{-\gl }}{k!}\exp ((k-1)f(\gl)) \exp \Big (
    -\frac{k(\gl -1)^2}{2N} + \frac{1-\gl}{N}-\frac{1}{2kN}
    \Big )\nonumber\\
    & \qquad\times \exp \Big (  \frac{k}{N^2}g(\lambda)+ O\Big
    (\frac{k}{N^3}+ \frac{1}{N^2}\Big )\Big),
    \end{align}
where
    \eq
    f(\gl)=\log \gl-(\gl-1), \qquad g(\gl)=-\frac16+ \frac{\gl^2}{2}-\frac{\gl^3}{3}.
    \en
The Taylor expansion for $|\lambda-1|$ small gives
    \eq
    \lbeq{flTaylor}
    f(\gl)= -\frac{(\lambda-1)^2}{2}+O(|\lambda-1|^3).
    \en
But $\lambda-1=\vep$ and $k\le  C_0 \vep^{-2}\log (1/\vep)$, where
$N^{-2/3} \ll \vep =o(1)$, and so
$k|\lambda-1|^3=o(1)$, uniformly for all such $k$.
The other error terms can be bounded similarly, and hence,
uniformly for $k\le  C_0\vep^{-2}\log (1/\vep)$,
    \eq
    \lbeq{probBPasympt}
    \prob_{N,p} (F=k)= (1+o(1))
    \frac{k^{k-1}e^{-k}}{k!}\exp \Big (-\frac12 (k-1) (\gl-1)^2\Big).
    \en

We now compare $\prob_{N,p} (F=k)$ to $\prob_{\tN,p} (\tilde{F}=k)$
for $k\le  C_0 \vep^{-2}\log (1/\vep)$.
Write $\gl /N = \tilde{\gl}/\tN$, where
$\tilde{\gl} = \tN \gl/N<\gl$.
Then a calculation similar to the one above shows that
    \begin{align*}
    \prob_{\tN,p} (\tilde{F}=k) &=
    \frac{k^{k-1}e^{-\tilde{\gl}-(k-1)}}{k!}
    \exp \Big ((k-1) f(\tilde{\gl})\Big ) \\
    &\qquad \times \exp \Big (
    -\frac{k(\tilde{\gl} -1)^2}{2\tN} +
    \frac{1-\tilde{\gl}}{\tN}-\frac{1}{2k\tN }
    + \frac{k}{\tN^2}g(\tilde{\gl}) + O\Big (\frac{k}{\tN^3}+
    \frac{1}{\tN^2} \Big )\Big )\Big ).
    \end{align*}
By assumption, $\vep = \gl -1>0$ and $\tvep = \tilde{\gl}-1>
0$. Further,
    \begin{align}
    f(\gl)-f(\tilde{\gl})
    = (\tvep-\vep) f'(1+s),
    \end{align}
for some $s\in (\tvep, \vep)$, where
    \eq
    |f'(1+s)|=\frac{s}{1+s}\leq s, \quad s\geq 0,
    \en
and so
    \eq
    f(\gl)-f(\tilde{\gl})=O(\vep|\vep-\tvep|).
    \en
We deduce that, for $\vep \ge N^{-2/3}$ and all $k\le C_0\vep^{-2}\log (1/\vep)$,
    \begin{align}
    \lbeq{diffProbbd}
    |\prob_{N,p} (F=k)-\prob_{\tN,p} (\tilde{F}=k)| & =
    \frac{(k/e)^{k-1}e^{-\gl}}{k!} \exp ((k-1)f(\gl))
    \big | \exp(x)-\exp(y)\big|
    \end{align}
where
    \begin{align*}
    x&=-\frac{k\vep^2}{2N}+\frac{k}{N^2}g(\gl)+
    O\Big (\frac{k}{N^3}+ \frac{\vep}{N}+ \frac{1}{kN}\Big ),\\
    y&=(\vep - \tvep)
    -\frac{k\tvep^2}{2\tN} + \frac{k}{\tN^2}g(\tilde{\gl})+ O(k\vep|\vep-\tvep|)
    + O\Big (\frac{k}{\tN^3}+ \frac{\tvep}{\tN}
    + \frac{1}{k\tN}\Big).
    \end{align*}
Since
$k\le  C_0\vep^{-2}\log (1/\vep)$, $N^{-2/3} \ll \vep =o(1)$ and
$\tvep =o(\vep)$, $x=o(1)$
and also all contributions to $y$ are $o(1)$,
except \ch{possibly} for the term $k\vep|\vep-\tvep|$.
\ch{Now, for some constant $C$,
    \eqn{
    \lbeq{Taylorexpfirst}
    |\exp(x)-\exp(y)|\leq C|x-y|e^{|x|\vee |y|},
    }
\chs{where, for $u,v\in {\mathbb R}$,  $u\vee v=\max\{u,v\}$.}
Note that $x=o(1)$ and, since $|\vep-\tvep|=o(\vep)$, we have that
$y=o(1)+o(k\vep^2)$. As a result, we obtain that,
for $N$ sufficiently large,
    \eqn{
    \lbeq{Taylorexp}
    |\exp(x)-\exp(y)|\leq C|x-y|e^{(k-1)\vep^2/4}.
    }
Since further} $N^{-2} - \tN^{-2} = O (N^{-2}|\vep
-\tvep|)$, the contribution to $|x-y|$ due to the term
$g(\gl)k/N^2-g(\tilde{\gl})k/\tN^2$ is $O(kN^{-2}|\vep
-\tvep|)=o(k\vep|\vep-\tvep|)$, which gives
    \eqn{
    |x-y|\leq C(1+k\vep)|\vep-\tvep|+O\Big(\frac{k}{N^3}+ \frac{\vep}{N}+ \frac{1}{kN}\Big).
    }
Hence,
\chs{combining \refeq{diffProbbd}, \refeq{Taylorexp}, \refeq{flTaylor},
for all $k \le C_0 \vep^{-2}\log (1/\vep^3)$, we arrive at
\refeq{aim2-31}.}

Summing the estimates \refeq{aim1-31} and \refeq{aim2-31} over $k\geq \ell$,
    \begin{align*}
    &|\prob_{N,p} (\ell \le F<\infty )-\prob_{\tN,p}
    (\ell \le \tilde{F}<\infty )| \\
    &\quad \le  C \sum_{k \ge \ell}
    \Big(
(1+k\vep)|\vep-\tvep|
    +\frac{k}{N^3}+ \frac{\vep}{N}+ \frac{1}{kN}\Big)
    \frac{\exp (-(k-1) \vep^2/\ch{4})}{k^{3/2}}+\sum_{k \ge \ell}k^{-4}.
    \end{align*}
The final contribution is $O(\ell^{-3})$; for the remaining terms observe that
    \eqn{
    \sum_{k \ge \ell} k^{-a} \exp (-(k-1) \vep^2/\ch{4})
    \leq \begin{cases}
    C\ell^{1-a} &\text{for }a>1,\\
    C\vep^{-2-2a} &\text{for }a<1,
    \end{cases}
    }
which yields (with a suitably adjusted value of $C$)
    \begin{align}
    |\prob_{N,p} (\ell \le F <  \infty )-\prob_{\tN,p}
    (\ell \le \tilde{F} < \infty )|&\leq C\Big(
    |\vep -\tvep|+\frac{1}{\vep N^3}+\frac{\vep}{N \ell^{1/2}}
    +\frac{1}{\ell^{3/2} N}+\frac{1}{\ell^3}\Big)\nonumber\\
    &\leq C\Big(|\vep -\tvep|+\frac{1}{N\ell^{1/2}}+\frac{1}{\ell^3}\Big).
    \end{align}

To prove \refeq{aim3-31},
we need to estimate
$|a-\tilde{a}|$, where
$a=\prob_{N,p} (F< \infty)$ and $\tilde{a}=\prob_{\tN,p}
(\tilde{F}<\infty)$. The quantities $a,\tilde{a}$ respectively are the
smallest positive roots of the equations
    \eqn{
    \label{eq.extinct}
    a = (1 + \frac{\gl}{N} (a-1))^N,
    \qquad \text{and}\qquad
    \tilde{a} = (1 + \frac{\gl}{N}
    (\tilde{a}-1))^{\tN}.
    }
Using the convexity of probability generating
functions and the supercriticality of the branching
processes in question, the equations in~\eqref{eq.extinct} each have
precisely one root $a$ and $\tilde{a}$ respectively in the interval $[0,1)$.

The proof is divided into two main steps. In the first step, we prove that
$1-a=2\vep+O(\vep^2)$, which also implies
that $|a- \tilde{a}|=o(\vep)$
when $|\vep-\tvep|=o(\vep)$, so that we may use a Taylor expansion.
In the second main step, we prove that $|a- \tilde{a}|\leq C|\vep-\tvep|$.

To prove that $1-a=2\vep+O(\vep^2)$,
we expand the right hand side of~(\ref{eq.extinct}) to obtain
    \begin{align*}
    a-1 & = {N \choose 1} \frac{\gl}{N} (a-1) + {N \choose 2} \Big
    (\frac{\gl}{N} \Big )^2 (a-1)^2 + O(|a-1|^3)\\
    1 & = \gl + \frac{N-1}{2N}\gl^2 (a-1) + O(|a-1|^2)\\
    1 & = 1+\vep + \frac{N-1}{2N}(1+\vep)^2 (a-1)+ O(|a-1|^2),
    \end{align*}
so that
    \begin{align*}
    (1-a) (1 + 2 \vep + \vep^2-N^{-1} -2 N^{-1} \vep + O(N^{-1} \vep^2)) =
    2 \vep + O(|1-a|^2),
    \end{align*}
and so, again using that $\vep\geq N^{-2/3}$,
    \begin{align}
    \lbeq{aasympt}
    1-a = 2 \vep+O(\vep^2).
    \end{align}
To prove that $|a-\tilde{a}|\leq C|\vep-\tvep|$, we use that
    \eqn{
    a-\tilde{a}= (1 + \frac{\gl}{N} (a-1))^N-(1 + \frac{\gl}{N}
    (\tilde{a}-1))^{\tN}
    =f_{\tN}(a) -f_{\tN}(\tilde{a}) +
    f_{\tN}(a)\Big((1 + \frac{\gl}{N} (a-1))^{N-\tN}-1\Big),
    }
where $f_{\tN}(x)=(1 + \frac{\gl}{N}(x-1))^{\tN}.$
Note first that
    \eqn{
    (1 + \frac{\gl}{N} (a-1))^{N-\tN}-1
    =O\Big(\frac{N-\tN}{N}(1-a)\Big).
    }
Further, since $|a-\tilde{a}|=o(|a-1|)$, we have that $f_{\tN}(a)\leq 2$.
Also,
    \begin{align}
    f_{\tN}'(x)&=\frac{\tN \lambda}{N}\big(1 + \frac{\gl}{N}
    (x-1)\big)^{\tN-1},\\
    f_{\tN}''(x)&=\frac{\tN \lambda}{N}\frac{(\tN-1)\lambda}{N}\big(1 + \frac{\gl}{N}
    (x-1)\big)^{\tN-2},
    \end{align}
and it is not hard to see that $f_{\tN}''(x)=O(1)$ uniformly
for $x\in [\tilde{a}, a]$. Hence
    \eqn{
    a-\tilde{a}
    =(a-\tilde{a})f_{\tN}'(\tilde a)+O\Big(\frac{N-\tN}{N}(1-a)\Big)+O\big((a-\tilde{a})^2\big),
    }
so that
    \eqn{
    a-\tilde{a}=O\Big(\frac{(N-\tN)(1-a)}{N|1-f_{\tN}'(\tilde a)|}\Big)
    +O\Big(\frac{(a-\tilde{a})^2}{|1-f_{\tN}'(\tilde a)|}\Big).
    }
A closer inspection of $f_{\tN}'(x)$ yields that
$f_{\tN}'(\tilde a)-1=\vep+o(\vep)$, so that
    \eqn{
    |a-\tilde{a}|\leq O\Big(\frac{|N-\tN|}{N}\Big)
    +O\Big(\frac{(a-\tilde{a})^2}{1-a}\Big)\leq C|\vep-\tvep|.
    }
This completes the proof of Proposition \ref{prop.branch-1}.
\end{proofof}

\smallskip

\noindent
\begin{proofof}{Proposition~\ref{prop.branch-2}}
By \refeq{probBPasympt}, for all $k \le C_0\vep^{-2} \log(1/\vep)$
    \begin{align*}
    \prob_{N,p} (F=k) &=
    (1+o(1))\frac{(k/e)^{k-1}e^{-\gl}}{k!}\exp \Big(-\frac12 (k-1)
    \vep^2\Big).
    \end{align*}
Also (provided $C_0$ is large enough) for $k \ge C_0\vep^{-2} \log(1/\vep)$,
 \begin{align*}
    \prob_{N,p} (F=k) \le k^{-4}.
    \end{align*}
Summing over $\ell \le k \le 2 \ell$ we obtain
    \begin{align*}
    \prob_{N,p} (\ell \le F \le 2 \ell)
    & \le C  \sum_{\ell \le k \le 2 \ell} \Big(\frac{1}{k^{3/2}}
    e^{-k\vep^2/2}+k^{-4}\Big) \le \frac{C}{\ell^{1/2}},
    \end{align*}
where the constant $C$ was adjusted within the final inequality.
\end{proofof}

\smallskip

\noindent
\begin{proofof}{Proposition~\ref{prop.branch-3}}
We have
    \eqn
    {\lbeq{probsplitProp32}
    \prob_{N,p} (F \geq \ell) = 1-\prob_{N,p} (F < \infty) +\prob_{N,p} (\ell\leq F < \infty).
    }
By \refeq{aasympt}, the term $1-\prob_{N,p} (F < \infty)$ is
$2\vep + O (\vep^2)$. Calculations similar to those in the proof of
Proposition~\ref{prop.branch-2} show that the final term is bounded by
$O(\ell^{-1/2})$, which completes the proof.
\end{proofof}

\smallskip

\section{Comparisons to branching processes}
\label{S: comparison} \ch{In this section, we use comparisons to
branching processes and concentration of measure techniques to study
the cluster tail probabilities (cf. Proposition \ref{lem-CCS}), as
well as the cluster structure, specifically, the number of vertices
per line, of large clusters (cf. Proposition \ref{lem-goodlines}).

This section is organised as follows. In Section~\ref{S: explore},
we describe a {\it cluster exploration} procedure, state key
estimates for the tails of the cluster size distribution, and prove
the upper bound part of Proposition~\ref{lem-CCS}. In
Section~\ref{S: upper} we establish an upper bound on
the number of elements per line in a large cluster; this result is a
crucial ingredient in the proof of Proposition~\ref{lem-goodlines}.
Section \ref{S: lower} contains a proof of the lower bound part of
Proposition~\ref{lem-CCS}, as well as a proof of
Proposition~\ref{lem-goodlines}.}

\subsection{Component exploration and strategy of proof}
\label{S: explore}

We take an initial vertex ${\bf v}_0= (x_0,y_0)$ and {\it explore}
its cluster, $C({\bf v}_0)$, by {\it exploring} the vertices in that
cluster successively one at a time, in a breadth-first order.
Exploring a vertex $(x,y)$ means that we consider all the edges
$(x,j)$ for $j \not = y$ in the order of increasing $j$, and decide
for each one in turn if it is open with probability $p$ or closed
with probability $1-p$; then we do the same for the edges $(i,y)$
for $i \not = x$ in the order of increasing $i$. \chs{Note that,
until the moment all available vertices in the cluster have been
explored, the number of explored vertices at time $t$ is equal to
$t$.}

Let us introduce
colours as follows. At time $t$, all vertices that have not yet been
explored and are not yet contained in $C({\bf v}_0)$ are
{\it white}. All unexplored vertices connected to ${\bf v}_0$
(that is, included in $C({\bf v}_0)$) at time
$t$ are {\it green}. All explored vertices are
{\it red}. (Thus, in particular, at time 0
all vertices are white except for ${\bf v}_0$, which is green.)
In fact, we need to modify this exploration process slightly as
follows: when exploring a green vertex we only consider those of its
edges where the other endpoint of the edge is white. If such an edge
is found to be open, then we colour its other endpoint green.

Let $C_t ({\bf v}_0)$ be the set of vertices included in the cluster
of ${\bf v}_0$ by the time $t$. Let also $G_t ({\bf v}_0)$ be the set of green
vertices in the cluster at time $t$. Thus $C_t ({\bf v}_0)$ consists
of all green and red vertices at time $t$, and  $\chs{R_t ({\bf v}_0)=}C_t ({\bf v}_0)
\setminus G_t ({\bf v}_0)$ is the set of red vertices \chs{at time $t$.} All the
remaining vertices in the graph are white.

Let $T_{{\bf v}_0}$ denote the smallest time $t$ when there are no green
vertices remaining, that is $T_{{\bf v}_0}= \inf \{t: |G_t ({\bf v}_0)|=0\}$.
Note that $T_{{\bf v}_0} = |C({\bf v}_0)|$, the size
of the cluster of vertex ${\bf v}_0$, \chs{and $|R_t ({\bf v}_0)|=t$ for all
$t\leq T_{{\bf v}_0}$.} Choose a parameter $\eta = \eta
(\vep,V)$ such that $0 < \eta \ll \vep$ and let
    \eq
    \lbeq{Tdef}
    T = T_{{\bf v}_0} \land \lceil \eta V \rceil
    \en
be the minimum of
$T_{{\bf v}_0}$ and $\lceil \eta V\rceil$.

Given an integer $i \in \{1, \ldots , n\}$, let $C_t ({\bf v}_0,i)$ be the set
of vertices $(i,y)$ included in the
cluster at time $t$. (This is the collection of all the elements of
the $i$-th horizontal line added by time $t$ during the exploration
procedure.) Let also $C({\bf v}_0,i)$ be the set
of all vertices $(i,y)$ in $C ({\bf v}_0)$, that is, the collection of
all the elements in the $i$-th horizontal line contained in $C ({\bf
v}_0)$. We further
denote the number of elements
of the $i$-th horizontal line included in $C({\bf v}_0)$ until time
$T$ by $N({\bf v}_0,i) = |C_{\sT}({\bf v}_0,i)|$.

Similarly, let $\hat{C}_t ({\bf v}_0,i)$ be the set
of vertices $(x,i)$ included in the cluster at time $t$, that is
all the $i$-th vertical line elements added by
time $t$ during the exploration procedure.) Let $\hat{C} ({\bf v}_0,i)$ be the set
of all vertices $(x,i)$ in $C ({\bf v}_0)$; and, finally,
denote the number of elements of the $i$-th vertical line included in
$C ({\bf v}_0)$ until time $T$ by $\hat{N}({\bf v}_0,i) = |\hat{C}_{\sT} ({\bf v}_0,i)|$.

We write $(x_t,y_t)$ for the vertex that is explored at time $t$ if such a
vertex exists, that is, if $t \le T$.
We may identify the set of colours with the set $\{0,1,2\}$.
The state of the exploration process at time $t$ is the list giving
the colour of each vertex, in other words, an $n$-vector with values in
${\{0,1,2\}}^n$. This process
defines a natural filtration $\phi_0 \subseteq
\phi_1 \subseteq \ldots \subseteq \phi_{\sss T}$, where $\phi_t$ is the smallest
$\sigma$-field with respect to which the state at time $t$ is measurable.
(Informally, $\phi_t$ corresponds to ``everything that has
occurred until time $t$''.)
We note that $T$ is a
stopping time with respect to this filtration.
We note also that, even  on the event $\{T = \lceil\eta V\rceil\}$, it is not
necessarily the case that $C_{\sss T} ({\bf v}_0) = \lceil \eta V\rceil$, since
the number of new vertices added at each exploration step is a random
variable, which can be smaller or greater than 1.
We stop our process at time $T$, and we
make the convention that $C_t ({\bf v}_0) = C_{\sss T} ({\bf v}_0)$ for all
$t \ge T$ (and similarly for all other relevant random variables). This
is important when $T=T_{{\bf v}_0} < \eta V$, that is, when the process dies
out before time $\eta V$.

Following the notation of Section~\ref{S: branching}, we let $F$ denote
the total population size of a Galton-Watson
process starting with one individual, where the offspring
distribution is $\Bi(\cn,p)$; and further $\prob_{\cn,p}$ denotes the
probability measure corresponding to this branching process.
Proposition~\ref{lem-CCS}
involves upper and lower bounds on $\Pr (|C({\bf v}_0)|\geq \ell)$
for appropriate choices of $\ell$. These bounds
are formulated in Lemmas~\ref{prop-couplingUB}--
~\ref{prop-couplingLB} below.

\begin{lemma}
[Stochastic domination of cluster size by branching process progeny size]
\label{prop-couplingUB}
For every $\ell \in \bbN$,
    \ch{\eqn{
    \Pr (|C({\bf v}_0)|\geq \ell) \leq
    \prob_{\cn,p}(F\geq \ell).
    }}
\end{lemma}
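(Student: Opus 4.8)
The plan is to construct an explicit coupling between the cluster exploration process described in Section~\ref{S: explore} and the binomial Galton-Watson process with offspring distribution $\Bi(\cn,p)$, in such a way that the exploration process is \emph{dominated} by the branching process at every step. The key observation is that when we explore a green vertex $(x,y)$ in the cluster, we only test those incident edges whose other endpoint is currently white; the number of such available edges is at most $\cn=2(n-1)$ (the degree of every vertex in $H(2,n)$), and each is open independently with probability $p$. Hence the number of new green vertices produced when exploring a given vertex is stochastically dominated by a $\Bi(\cn,p)$ random variable: we can realise the true offspring count as $\Bi(w,p)$ where $w\le\cn$ is the number of white neighbours, and couple this with a $\Bi(\cn,p)$ variable by adding $\cn-w$ extra independent $\Be(p)$ trials that are ``wasted''.

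First I would set up the coupling formally. Run the exploration of $C({\bf v}_0)$ in breadth-first order, and simultaneously run a Galton-Watson process $(Z_i)$ with offspring law $\Bi(\cn,p)$, using the same underlying coin flips: at the $t$-th exploration step, if the explored vertex has $w_t\le\cn$ white neighbours, use the first $w_t$ of a block of $\cn$ independent $\Be(p)$ trials to decide which edges are open (thus determining the real offspring), and feed all $\cn$ trials into the branching process to determine the $t$-th individual's offspring. Writing $C_t({\bf v}_0)$ for the explored-plus-green set at time $t$ and comparing it to the first $t$ generations' worth of the branching-process queue, one shows by induction on $t$ that $|C_t({\bf v}_0)|\le |{\rm BP}_t|$, where ${\rm BP}_t$ is the analogous quantity in the branching process; in particular the exploration dies out no later than the branching process, so the total cluster size $|C({\bf v}_0)|=T_{{\bf v}_0}$ is stochastically dominated by the total progeny $F$. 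Taking $\{|C({\bf v}_0)|\ge\ell\}\subseteq\{F\ge\ell\}$ under the coupling then gives $\Pr(|C({\bf v}_0)|\ge\ell)\le\prob_{\cn,p}(F\ge\ell)$.

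I do not expect any serious obstacle here: this is the standard ``exploration process is dominated by a branching process'' argument, and the only point requiring a little care is bookkeeping the coupling so that edges already tested (to already-coloured vertices) are simply not re-examined, which is exactly why $w_t\le\cn$ rather than $w_t=\cn$ and hence why domination (rather than equality) is what we get. One should also note that the inequality is valid for every $\ell\in\bbN$ with no restriction on $\vep$, since it is a purely combinatorial/probabilistic comparison that does not use any asymptotics. The mildly delicate step, if one wants to be fully rigorous, is to make sure the breadth-first queue in the exploration lines up with a valid enumeration of the branching-process tree so that ``first $t$ individuals'' makes sense on both sides; this is routine and can be handled by processing both queues in the same order.
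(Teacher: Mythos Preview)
Your proposal is correct and is exactly the standard argument the paper has in mind: the paper in fact omits the proof, merely noting that ``in the cluster exploration, from each vertex being explored, \emph{at most} $\Bi(\cn,p)$ new vertices can be added to the cluster, independently of what has already been added,'' which is precisely the observation underlying your coupling. Your write-up simply supplies the details (the explicit coupling via padding with extra $\Be(p)$ trials and the inductive comparison of queues) that the paper leaves implicit.
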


\ch{The result in Lemma~\ref{prop-couplingUB} is standard, and we
will omit its proof. In essence, it follows since in the cluster
exploration, from each vertex being explored, {\it at most}
$\Bi(\cn, p)$ new vertices can be added to the cluster,
independently of what has already been added. Thus the total cluster
size must be at most the total population size of the binomial
Galton-Watson process, as claimed.}

Lemma~\ref{prop-BP} below follows directly from
Lemma~\ref{prop-couplingUB} \ch{and Proposition~\ref{prop.branch-3},}
and establishes the upper bound part of Proposition~\ref{lem-CCS}.
It is also used in the proof of Lemma~\ref{prop-Zsmall}
in Section~\ref{S: conc}.

\begin{lemma}
\label{prop-BP}
For every $\ell \in \bbN$, and for $\vep\geq V^{-1/3}$,
    \begin{align*}
    \Pr (|C({\bf v}_0)|\ge \ell )& \le 2 \vep + O(\vep^2)+
    O\Big ( \frac{1}{\sqrt{\ell}} \Big ).
    \end{align*}
In particular, if $\vep^{-2} \ll \eta V \ch{\ll} \vep V$, then
    \eqn{
    \Pr (|C({\bf v}_0)| \geq \eta V) \le 2\vep (1+o(1)).
    }
\end{lemma}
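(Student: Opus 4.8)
The first assertion is essentially immediate from the tools already assembled. The plan is to combine the stochastic domination of Lemma~\ref{prop-couplingUB} with the branching-process tail estimate of Proposition~\ref{prop.branch-3}. Concretely, Lemma~\ref{prop-couplingUB} gives $\Pr(|C({\bf v}_0)|\ge \ell)\le \prob_{\cn,p}(F\ge \ell)$, so it suffices to verify that the hypotheses of Proposition~\ref{prop.branch-3} are met with $N=\cn=2(n-1)$. The relevant quantity there is $\tilde\vep:=\cn p-1$; since $p=p_c+\vep/\cn$ and $\cn p_c = 1+O(\cn^{-1})$ by \refeq{pcchires}, we have $\cn p - 1 = \vep + O(\cn^{-1})=\vep(1+o(1))$ because $\vep\gg V^{-1/3}=n^{-2/3}\gg n^{-1}$. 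Thus the branching-process parameter is asymptotically $\vep$, it tends to $0$, and it satisfies the lower bound $\ge \cn^{-2/3}$ needed in Proposition~\ref{prop.branch-3} (again using $\vep\ge V^{-1/3}$). Applying Proposition~\ref{prop.branch-3} with this parameter yields $\prob_{\cn,p}(F\ge\ell)=2\vep+O(\vep^2)+O(\ell^{-1/2})$, which is exactly the claimed bound (the $O(\cn^{-1})$ discrepancy between $\vep$ and $\cn p -1$ is absorbed into the $O(\vep^2)$ term since $\cn^{-1}=o(\vep^2)$ when $\vep\gg n^{-1/3}$, and this is implied by $\vep\gg n^{-2/3}$ only if... here one should be slightly careful: $\cn^{-1}\asymp n^{-1}$ and $\vep^2\gg n^{-4/3}$, so $\cn^{-1}=o(\vep^2)$ indeed holds).

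For the second assertion, I would simply specialise $\ell=\lceil\eta V\rceil$ under the additional hypothesis $\vep^{-2}\ll \eta V \ll \vep V$. From the first part, $\Pr(|C({\bf v}_0)|\ge \eta V)\le 2\vep + O(\vep^2)+O((\eta V)^{-1/2})$. The condition $\eta V\gg \vep^{-2}$ forces $(\eta V)^{-1/2}\ll \vep$, and $O(\vep^2)=o(\vep)$ trivially since $\vep\to 0$; hence both error terms are $o(\vep)$ and the bound collapses to $2\vep(1+o(1))$, as required.

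There is no real obstacle here: the lemma is a bookkeeping corollary of the already-established Lemma~\ref{prop-couplingUB} and Proposition~\ref{prop.branch-3}. The only point deserving a sentence of care is the translation between the percolation parameter $\vep$ (defined via $p=p_c+\vep/\cn$) and the branching-process parameter $\cn p - 1$, and the verification that the gap $O(\cn^{-1})$ between them is negligible relative to both $\vep$ and the error terms in Proposition~\ref{prop.branch-3}; this is where one uses $\vep\ge V^{-1/3}=n^{-2/3}$, so that $\cn^{-1}=o(\vep^{3/2})=o(\vep^2/\vep^{1/2})$ and in particular $\cn^{-1}=o(\vep)$, which suffices.
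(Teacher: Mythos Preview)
Your approach is exactly the paper's: combine Lemma~\ref{prop-couplingUB} with Proposition~\ref{prop.branch-3}, then specialise $\ell=\eta V$ and note that $\eta V\gg\vep^{-2}$ kills the $O(\ell^{-1/2})$ term. The paper's proof is four lines long and does precisely this.

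Your only detour is the discussion of the gap between the parametrisations $p=p_c+\vep/\cn$ and $p=(1+\vep)/\cn$. The paper simply writes ``Our choice of $p=p(n)$ implies that $\cn p=1+\vep$'', i.e.\ it works throughout with $\vep=\cn p-1$ as the operative definition (this is the convention declared in the abstract and at the start of Section~\ref{sec-model}, with the equivalence to $p_c+\vep/\cn$ already argued in \refeq{critcomp1}--\refeq{critcomp2}). So your translation step is unnecessary, and in fact your arithmetic within it slips: you claim $\cn^{-1}=o(\vep^2)$ from $\vep^2\gg n^{-4/3}$ and $\cn^{-1}\asymp n^{-1}$, but $n^{-1}\gg n^{-4/3}$, so that implication fails when $\vep$ is near $V^{-1/3}$. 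This does not damage the actual proof, since with the paper's convention there is no discrepancy to absorb.
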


\begin{proof}
By Lemma~\ref{prop-couplingUB}, for every $\ell \in \bbN$,
    \begin{align*}
    \Pr (|C({\bf v}_0)|\geq \ell) \leq
    \prob_{\cn,p}(F\geq \ell).
    \end{align*}
Our choice of $p=p(n)$ implies that $\cn p=1+\vep  > 1$, that is, the
$\Bi (\cn,p)$ Galton-Watson process is supercritical.
By Proposition~\ref{prop.branch-3},
    \begin{align*}
    \prob_{\cn,p} (F \ge \chs{\ell})& =2 \vep + O(\vep^2)
    + O\Big ( \frac{1}{\sqrt{\chs{\ell}}} \Big ).
    \end{align*}
\chs{For $\ell=\eta V$, we have that} $\eta V\gg \vep^{-2}$,
and so $1/\sqrt{\eta V}= o(\vep)$,
which completes the proof.
\end{proof}

Our next \ch{lemma} establishes a lower bound on $\Pr (|C({\bf
v}_0)|\geq \ell)$, that is, the lower bound part of
Proposition~\ref{lem-CCS}.
\begin{lemma}
[Stochastic domination of cluster size over branching process progeny size]
\label{prop-couplingLB}
For every $\ell \ll \vep V$,
    \begin{align}
    \label{eqn.lower-1}
    \Pr (|C({\bf v}_0)|\geq \ell) \geq
    \prob_{\chs{\tcn},p}(F\geq \ell)+O(V^{-3}),
    \end{align}
where $\chs{\tcn}= \cn - \frac{5}{2} \max \{\ell n^{-1},C \log n\}$.\\
Consequently, if $\vep\gg V^{-1/3}$, $\eta \ll \vep$ and $\vep^{-2}
\ll \eta V$, then 
    \eqn{
    \label{eqn.lower-2}
    \prob_p(|C({\bf v}_0)|\geq \eta V)\geq 2\vep (1+o(1)).
    }
\end{lemma}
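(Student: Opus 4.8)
The plan is to establish \refeq{eqn.lower-1} by coupling the breadth-first exploration of $C({\bf v}_0)$ from below with a binomial Galton-Watson process of offspring law $\Bi(\tcn,p)$, and then to read off \refeq{eqn.lower-2} from Proposition~\ref{prop.branch-3}. The only delicate input is a concentration estimate asserting that, during the first $\ell$ exploration steps, no coordinate line of $H(2,n)$ accumulates more than about $\max\{\ell n^{-1},C\log n\}$ cluster vertices.

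I would run the exploration for the first $\min\{\ell,T_{{\bf v}_0}\}$ steps and note that, when the vertex $(x_t,y_t)$ is explored, the number of new cluster vertices it produces is $\Bi(W_t,p)$, where the number $W_t$ of white neighbours of $(x_t,y_t)$ satisfies $W_t\ge\cn-|C_t({\bf v}_0,x_t)|-|\hat C_t({\bf v}_0,y_t)|$, i.e.\ $\cn$ minus the numbers of already-discovered elements on the horizontal line through $(x_t,y_t)$ and on the vertical line through it. Let $\mathcal G$ be the event that, throughout these steps, every horizontal and every vertical line contains at most $\tfrac54\max\{\ell n^{-1},C\log n\}$ discovered vertices, so that on $\mathcal G$ one has $W_t\ge\tcn$ for every $t\le\min\{\ell,T_{{\bf v}_0}\}$. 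Since each edge status is examined at most once, on $\mathcal G$ a genuine $\Bi(\tcn,p)$ Galton-Watson process can be coupled to the exploration so that its offspring at each step is dominated by the number of new cluster vertices; hence on $\mathcal G\cap\{F\ge\ell\}$ the exploration's green queue cannot become empty before step $\ell$, and so $|C({\bf v}_0)|=T_{{\bf v}_0}\ge\ell$. Therefore
\[
\Pr(|C({\bf v}_0)|\ge\ell)\ \ge\ \Pr\bigl(\mathcal G\cap\{F\ge\ell\}\bigr)\ \ge\ \prob_{\tcn,p}(F\ge\ell)-\Pr(\mathcal G^c),
\]
and \refeq{eqn.lower-1} follows once we show $\Pr(\mathcal G^c)=O(V^{-3})$.

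The bound $\Pr(\mathcal G^c)=O(V^{-3})$ is the heart of the matter, and it is where the geometry of $H(2,n)$ enters. I would fix a single line, say horizontal line $i$, and bound the number of its elements discovered in the first $\ell$ steps. Such an element is added either through a within-line (horizontal) edge from an already-explored element of line $i$, or through a vertical edge $(x',y)\to(i,y)$ from an explored vertex off line $i$. The first mechanism is by itself a \emph{subcritical} branching process: each explored element of line $i$ opens at most $n-1$ within-line edges, each present with probability $p$, and $(n-1)p=(1+\vep)/2<1$. The second mechanism adds at most one vertex per explored vertex, hence at most a $\Bi(\ell,p)$ number in total, with mean of order $\ell/(2n)$; a Chernoff bound, with $C$ chosen large enough, shows this is at most $\tfrac14\max\{\ell n^{-1},C\log n\}$ with probability $1-O(V^{-4})$. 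Treating these ``immigrants'' as additional roots, the number of discovered elements of line $i$ is stochastically dominated by the total progeny of the subcritical branching process started from $1+\Bi(\ell,p)$ individuals, whose mean is at most $2(1+\Bi(\ell,p))/(1-\vep)$; a further Chernoff/martingale estimate on the edge-revealing filtration then bounds this total by $\tfrac54\max\{\ell n^{-1},C\log n\}$ with probability $1-O(V^{-4})$. A union bound over the $2n$ lines gives $\Pr(\mathcal G^c)=O(V^{-3})$, completing the proof of \refeq{eqn.lower-1}.

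For \refeq{eqn.lower-2}, take $\ell=\lceil\eta V\rceil$, so $\tcn=\cn-\tfrac52\max\{\eta V/n,C\log n\}$ and $\tvep:=\tcn p-1$. Using $\eta\ll\vep$, $\eta V\gg\vep^{-2}$ and $\vep\gg V^{-1/3}\gg(\log n)/n$, one checks that $\max\{\eta V/n,C\log n\}/\cn=o(\vep)$, whence $\tvep=\vep(1+o(1))>0$ and $\tvep\ge\tcn^{-2/3}$ for $n$ large. Proposition~\ref{prop.branch-3} applied with $N=\tcn$ then gives
\[
\prob_{\tcn,p}(F\ge\eta V)=2\tvep+O(\tvep^2)+O\bigl((\eta V)^{-1/2}\bigr)=2\vep(1+o(1)),
\]
since $\tvep^2=o(\vep)$ and $(\eta V)^{-1/2}=o(\vep)$; combining with \refeq{eqn.lower-1} and $O(V^{-3})=o(\vep)$ yields $\prob_p(|C({\bf v}_0)|\ge\eta V)\ge2\vep(1+o(1))$. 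The main obstacle throughout is the per-line concentration estimate controlling $\Pr(\mathcal G^c)$; the coupling step and the deduction of \refeq{eqn.lower-2} are routine.
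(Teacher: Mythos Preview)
Your proposal is correct and follows essentially the same approach as the paper: couple the exploration from below with a $\Bi(\tcn,p)$ Galton--Watson process on the good event that no coordinate line accumulates too many discovered vertices, and bound the bad event by $O(V^{-3})$. Two minor differences are worth noting: the paper packages your per-line concentration estimate as a separate result (Proposition~\ref{prop-badlines}) and simply cites it here, whereas you sketch it inline; and for \refeq{eqn.lower-2} the paper takes a somewhat roundabout route via the survival probability $\prob_{\tcn,p}(F=\infty)$ together with a binomial tail bound, while your direct application of Proposition~\ref{prop.branch-3} to $\prob_{\tcn,p}(F\ge\eta V)$ is cleaner and equally valid.
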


Lemma~\ref{prop-couplingLB} is proved in Section~\ref{S: lower},
where we show that the cluster size stochastically dominates the
$\Bi (\chs{\tcn},p)$ Galton-Watson process.
\smallskip

\subsection{\chs{Upper bounds on the cluster size and structure}}
\label{S: upper}
In this \chs{section} we give an upper bound on the number of elements of
a large cluster that belong to a particular horizontal line.
The following proposition is crucial in the proofs of
Proposition~\ref{lem-goodlines} and \chs{Lemma~\ref{prop-couplingLB}}.

\begin{proposition}[Upper bound on the number of elements per line in
a large cluster]
\label{prop-badlines}
Let \\ $\vep =\vep (n)\geq 0$ be such that $\vep =\vep (n)\le 1/20$
and choose $\eta \ll \vep$. Further, let
$N({\bf v_0},i)$ be the number of elements of $C_{\sT}({\bf v_0})=C_{\sss \chfin{\lceil \eta V \rceil}}
({\bf v_0})$ that belong to the horizontal line $i$. There exists a
positive constant $c_1$ such that
for every $\nu > 0$
    \eqn{\prob_p\Big(\max_{i =1, \ldots , n}: N({\bf v_0},i) \ge (1+\nu)
    \frac{11}{9}\eta n\Big)
    \le ne^{-c_1\nu \eta n}.}
Furthermore, there exist constants $c_2,c_3,c_4 > 0$ such that the following
holds:
\begin{enumerate}
\item  Let $n \in \bbN$ and $\eta = \eta (n)$ be such that $\eta n \ge
c_2 \log n$. If $n$ is sufficiently large, then
    \eqn{\prob_p \Big (\max_{i =1, \ldots , n}: N({\bf v_0},i) \ge
    \frac54 \eta n \Big )\le c_4 V^{-3}.}
\item Let $n \in \bbN$ and $\eta = \eta (n)$ be such that $\eta n/
\log n < c_2$. If $n$ is sufficiently large, then
    \eqn{\prob_p\Big(\max_{i =1, \ldots , n}: N({\bf v_0},i) \ge c_3 \log n\Big)
    \le c_4 V^{-3}.}
\end{enumerate}
\end{proposition}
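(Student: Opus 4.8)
The plan is to reduce the whole proposition to one stochastic domination bound: for every \emph{deterministic} threshold $M=M(n)\in\bbN$,
\[
\prob_p\bigl(N({\bf v_0},i)\ge M\bigr)\ \le\ \prob\bigl(\Bi(\lceil\eta V\rceil+M(n-1),p)\ge M-1\bigr),
\]
and then to read off the three assertions by plugging in suitable $M$ and applying a Chernoff estimate. To prove the displayed inequality I would run the exploration of Section~\ref{S: explore} from ${\bf v_0}$, but stop it at $T'_i=T\wedge\inf\{t:|C_t({\bf v_0},i)|\ge M\}$, i.e.\ also as soon as line $i$ has accumulated $M$ elements. The only role of this extra stopping rule is bookkeeping: by construction the number of \emph{explored} vertices lying on line $i$ is at most $M$ (it is at most $M-1$ strictly before time $T'_i$ and grows by at most one at the last step), which is exactly what removes the otherwise circular dependence of $N({\bf v_0},i)$ on how much of line $i$ has already been revealed.

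The core is an edge count. Each vertex of $C_{T'_i}({\bf v_0})$ on line $i$ other than ${\bf v_0}$ became green while some explored vertex was being examined, through an examined open edge incident to line $i$, and no edge is examined twice (colours change only away from white). If the explored vertex is off line $i$, say $(x,y)$ with $x\ne i$, its only edge that can add a line-$i$ vertex is the vertical edge to $(i,y)$; if it is $(i,y)$, only its $n-1$ horizontal edges $(i,y)$--$(i,j)$ can add new line-$i$ vertices. Hence the number of examined edges capable of adding a line-$i$ vertex is at most (number of explored vertices off line $i$) plus $(n-1)$ times (number of explored vertices on line $i$), and so at most $\lceil\eta V\rceil+M(n-1)$: at most $T'_i\le T\le\lceil\eta V\rceil$ vertices are explored in all, and at most $M$ of them lie on line $i$ by the stopping rule. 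Revealing $\lceil\eta V\rceil+M(n-1)$ i.i.d.\ $\Be(p)$ edge states in advance and reading them off as the relevant edges are examined gives $|C_{T'_i}({\bf v_0},i)|\le 1+\Bi(\lceil\eta V\rceil+M(n-1),p)$ pointwise; and on $\{N({\bf v_0},i)\ge M\}$ the infimum defining $T'_i$ is at most $T$, so $|C_{T'_i}({\bf v_0},i)|\ge M$ there. Combining the two yields the displayed inequality.

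For the quantitative part, recall $\cn=2(n-1)$ and $p=(1+\vep)/\cn$, so the mean of the binomial is $\mu:=\bigl(\lceil\eta V\rceil+M(n-1)\bigr)p=\tfrac{1+\vep}{2}(\eta n+M)(1+o(1))$. Take $M=(1+\nu)\tfrac{11}{9}\eta n$; since $\vep\le\tfrac1{20}$ we have $\tfrac{1+\vep}{2}\le\tfrac{21}{40}$, so $\mu\le\bigl(\tfrac76+c\nu\bigr)\eta n(1+o(1))$ for an absolute constant $c$, whence the deviation $s:=M-1-\mu$ is at least $\tfrac1{18}\eta n+c'\nu\eta n$ with $c'>0$. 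A Chernoff bound for the binomial upper tail, $\prob(\Bi\ge\mu+s)\le\exp(-s^2/(2(\mu+s/3)))$, now gives $\prob_p(N({\bf v_0},i)\ge M)\le e^{-c_1\nu\eta n}$ uniformly in $\nu>0$: for $\nu\le1$ use $s\ge\tfrac1{18}\eta n$ and $\mu+s/3=O(\eta n)$, for $\nu\ge1$ use $s\gtrsim\nu\eta n$ and $\mu+s/3=O(\nu\eta n)$. A union bound over the $n$ horizontal lines then yields the first estimate of the proposition. Assertion~(1) is the case $\nu=\tfrac1{44}$, for which $(1+\nu)\tfrac{11}{9}=\tfrac54$: if $\eta n\ge c_2\log n$ with $c_2$ chosen so that $c_1c_2/44\ge 7$, then $ne^{-c_1\nu\eta n}\le n^{1-c_1c_2/44}\le n^{-6}=V^{-3}$. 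Assertion~(2) is the case $M=c_3\log n$: provided $c_3\ge\tfrac{11}{9}c_2$ this equals $(1+\nu)\tfrac{11}{9}\eta n$ for some $\nu>0$ with $\nu\eta n=\tfrac{9c_3}{11}\log n-\eta n\ge(\tfrac{9c_3}{11}-c_2)\log n$ (using $\eta n<c_2\log n$), so choosing $c_3$ with $c_1(\tfrac{9c_3}{11}-c_2)\ge 7$ again gives $ne^{-c_1\nu\eta n}\le n^{-6}=V^{-3}$.

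The single genuine obstacle is the circularity just flagged: $N({\bf v_0},i)$ is not a priori a sum of a controlled number of independent indicators, since exploring one line-$i$ vertex may add up to $n-1$ further line-$i$ vertices, and the number of line-$i$ vertices one explores is itself what we wish to bound. Stopping at $T'_i$ decouples this; one must only check that the reservoir coupling remains valid although edges are examined adaptively and even off the event $\{N({\bf v_0},i)\ge M\}$ --- it does, because the cap of $M$ on explored line-$i$ vertices is built into $T'_i$ rather than relying on that event. Everything else --- the identity $\cn=2(n-1)$, counting the examined edges, and the Chernoff estimate --- is routine.
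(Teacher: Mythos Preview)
Your argument is correct and takes a genuinely different route from the paper. The paper decomposes $N({\bf v_0},i)$ as a random sum $\sum_{r=1}^{S_{T}(i)} Z_r(i)$, where $S_T(i)$ counts the ``entries'' into line $i$ from off the line (dominated by $\Bi(\lceil\eta V\rceil,p)$) and each $Z_r(i)$ is the size of the resulting on-line excursion, stochastically dominated by the total progeny of a subcritical $\Bi(n-1,p)$ Galton-Watson process with mean $2/(1-\vep)$. The tail bound then comes from composing moment generating functions and a Taylor expansion. Your approach replaces this two-level structure by a single stopping-time bootstrap: the same threshold $M$ that appears in the target event also caps the number of explored line-$i$ vertices in the auxiliary process, which linearises the self-reference and reduces everything to one binomial tail. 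The leading-order balance is the same in both arguments (solving $M\le\tfrac{1+\vep}{2}(\eta n+M)$ gives $M\le\tfrac{1+\vep}{1-\vep}\eta n$, matching the paper's $\eta V p\cdot\E[Z]=\tfrac{1+\vep}{1-\vep}\eta n$), so the constant $\tfrac{11}{9}$ arises identically.

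What each approach buys: your argument is more elementary --- no Otter--Dwass formula, no MGF of a Galton--Watson total progeny, no second-order Taylor expansion --- and the reservoir coupling is transparent. The paper's decomposition, on the other hand, yields structural information (the excursion picture) that is conceptually natural and would adapt more readily if one wanted, say, moment bounds of higher order or information about the on-line subclusters themselves. A minor caveat in your write-up: the inequality $s\ge\tfrac{1}{18}\eta n+c'\nu\eta n$ is only literally true after absorbing the $-1$ and the $o(1)$ from $\lceil\eta V\rceil/(n-1)=\eta n+o(1)$, which requires $\nu\eta n$ to be at least of order $1$; but this is harmless, since whenever $\nu\eta n$ is bounded the claimed bound $ne^{-c_1\nu\eta n}$ exceeds $1$ and is vacuous.
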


Here is an informal outline of the proof.
Whenever we explore a vertex {\it not} on the line $i$, we add an
element of line $i$ with probability $p$. On the other hand, each
vertex belonging to the line $i$ has $n-1$ neighbours on that
line. Whenever such a vertex is explored, each one of its neighbours
on the line $i$ is included with probability $p$ (unless it is already
in the cluster). It follows that the number of new elements on line $i$
resulting from exploring a vertex belonging to that line
is stochastically dominated by a $\Bi (n-1,p)$ Galton-Watson process.
Since $p= \frac{1+ \vep}{2(n-1)}$ and $\vep \le 1/2 < 1$ for $n$ large
enough, we have that $(n-1)p <1$, so that the Galton-Watson process
is subcritical. Hence, using standard concentration of measure
techniques, we are able to upper bound the number of elements
on line $i$ that make it into a large cluster. We now make this argument precise.

\smallskip

\begin{proofof}{Proposition~\ref{prop-badlines}}
Let $i\in \{1, \ldots , n\}$ and, for each
$t=1, \ldots,T$, let $S_t(i)$ be the number of times $s$ such that
$(x_{s-1},y_{s-1})$, $(i,y_{s-1})$ is open and $x_{s-1}
\not = i$. 
That is, for each time $t \le T$, $S_t(i)$ is the number of times we
enter the horizontal line $i$ until time $t$.
We can write
$$S_t(i) = \sum_{s=1}^t Y_s(i),$$ where
$Y_t(i)$ is the indicator of the event that the
edge between $(x_{t-1},y_{t-1})$, $(i,y_{t-1})$ is open, and $x_{t-1}
\not = i$. For each time $t$,
    $$\Pr (Y_t(i) =1|\phi_{t-1}) \le p,
    $$
and so known results (see for instance Lemma 2.2 in~\cite{lmu03})
imply that $S_t(i)$ is
stochastically dominated by a $\Bi (t,p)$ \rv. Consequently,
for every $u \ge 0$, the \mgf{} $M_{S_t(i)}(u)$ is
bounded above by $(1+p(e^u-1))^t$.

For $r=1, 2, \ldots$, let
$Z_r(i)$ be the number of vertices $(i,x)$ added as a result of the
$r$-th entry on to horizontal line $i$.
Given that vertex $(i,\tilde{x}_0) \in C ({\bf v}_0)$, the number of
its neighbours $(i,x)$ added to  $C ({\bf v}_0)$ during its
exploration (if it has occurred by the time $\eta V$) is easily seen to be
stochastically dominated by a \rv{} $\Bi
(n-1,p)$.
Hence, for each $r$, $Z_r(i)$
is stochastically dominated by the total progeny in a
branching process with offspring distribution $\Bi (n-1,p)$
descending from a single individual. Since $p = (1+\vep)/2(n-1)$ and
$\vep < 1/2$, this
branching process is subcritical. We deduce that, for $u \ge 0$, the
\mgf{}
$M_{\sss Z_r(i)} (u)$ of $Z_r(i)$ is bounded above by the
\mgf{} $M_{\sss Z}(u)$ of an integer-valued, finite
\rv{} $Z$, whose distribution is given by the Otter-Dwass formula (Lemma \ref{lem-OD}). In other words, for
each $N \in \bbN$,
    \eqn{\label{eq-SD-1}
    \Pr (Z=N) = \frac{\prob (\xi_1 + \ldots + \xi_N = N-1)}{N},
    }
where the $\xi_r$ are \iid{} $\Bi (n-1,p)$.
It follows that
    \begin{align*}
    M_{\sss Z}(u) & = \sum_{N=1}^{\infty} \frac{e^{uN}}{N}\prob \big(\Bi (N(n-1), p ) =
    N-1\big)
    \\ &= \sum_{N=1}^{\infty} \frac{e^{uN}}{N} {{N (n-1)} \choose N-1}
    p^{N-1} (1-p)^{N(n-1)-(N-1)}.
    \end{align*}
Our aim is to derive an upper bound for the above expression. Unlike the
branching processes considered in Section \ref{S: branching}, which
were (slightly) supercritical, we are now
{\it subcritical}. Recall that
the expected total progeny of a $\Bi (m,p)$ Galton-Watson
process is $\frac{1}{1-mp}$; using this fact with $m=n-1$ and
$p=\frac{1+\vep}{2(n-1)}$,
we see that $\expec[Z]=\frac{2}{1-\vep}$.

As $N!\geq (N/e)^N$, we have
    \eqn{
    \frac{1}{N}{{N (n-1)} \choose N-1}\leq \frac{[N (n-1)]^{N-1}}{N!}
    \leq \frac{(n-1)^{N-1}}{N}e^{N},
    }
which in turn implies that
    \begin{align*}
    M_{\sss Z}(u) & \leq \sum_{N=1}^{\infty}
    \frac{e^{uN}}{N}(n-1)^{N-1}e^{N}
    \Big ( \frac{1+\vep}{2(n-1)}\Big )^{N-1} \Big (1-
    \frac{1+\vep}{2(n-1)}\Big )^{N(n-1)-(N-1)}.
    \end{align*}
Since $1-x\leq e^{-x}$, we see that
    \begin{align}
    \lbeq{mgf-bd1}
    M_{\sss Z}(u) &\leq \frac{2}{1+\vep}
    \sum_{N=1}^{\infty}\frac 1{N} \Big ( \frac{e^{u+1}(1+\vep)e^{-(1+\vep)/2}}{2}\Big )^N
    e^{\frac{(1+\vep)(N-1)}{2(n-1)}},
    \end{align}
which is finite for $0 \le u < \frac{1+\ep }{2} -1 -
\log \frac{1+\ep }{2}$ and $n$ large.
Further, it is easily seen that for such $u$ and $n$ the contribution
due to terms with $N > C \log n $ is negligible, provided that $C$ is a
sufficiently large constant.

Clearly, $C_t ({\bf v}_0,i)$
is always bounded above by $\sum_{r=1}^{S_t(i)}Z_r(i)$.
In particular, $\sum_{r=1}^{S_{\sss T}(i)} Z_r(i)$ is an upper bound on $C_{\sss T}
({\bf v}_0,i)$, which equals the
number of vertices $(i,x)$ included in the cluster of ${\bf v}_0$ from
time $0$ to time $\eta V$.

Let $u_0 = \frac12 (\frac{1+\vep}{2} -1 -
\log\frac{1+\vep}{2})$. Since $Z$ is non-negative, $M_{\chs{{\sss Z}}}(u) \ge 1$.
It follows from the above (using also the bound \chs{$1+x \le e^x$}) that for $0 \le u \le u_0$,
    \begin{align}
    \lbeq{mgf-bd2}
    M_{N({\bf v},i)}(u) & \le M_{S_{\eta V}(i)} (\log
    M_{\sss Z}(u))
    \le (1+
    p(M_{\sss Z}(u) -1))^{\lceil \eta V\rceil}
    \\&
    \le \exp \Big (\chs{\lceil \eta V\rceil}
    p(M_{\sss Z}(u)-1)\Big )
    \nonumber\\
    &
    = (1+o(1))\exp \Big (\frac12 \eta n(1+\vep)
    (M_{\sss Z}(u)-1)\Big )
    \nonumber\\&
    \le (1+o(1)) \exp \Big ( \Big [\eta n\frac{1+\vep}{1-\vep}u +
    \frac12 \eta n (1+\vep )u^2 M''_{\sss Z}(u_0) \Big ]\Big ),
    \nonumber
    \end{align}
where the final inequality comes from a second order Taylor
expansion. Also we have used the fact that
$\expec[Z]=2/(1-\vep)$,
and that the second derivative $M''_{\sss Z}(u)$ is
increasing in $u$ for $u \le u_0$.

Now we run a standard large deviations argument.
For all $k$,
    \begin{align}
    \Pr (N({\bf v},i) \ge k) & \le \frac{M_{N({\bf v},i)}(u)}{e^{uk}}
    \nonumber
    \\
    & \le (1+o(1))\exp \Big (\Big (\eta n\frac{1+\vep}{1-\vep}-k\Big )u +
    \frac12 \eta n (1+\vep )u^2 M''_{\sss Z}(u_0)\Big ).
    \label{eq-largedev}
    \end{align}
The expression in~(\ref{eq-largedev}) can be optimised with
respect to $u$ in the usual way, and one finds that there exists a
constant $c_1> 0$ such that for all $\nu > 0$,
    $$\Pr \Big (N({\bf v},i) \ge (1+\nu ) \eta n
    \frac{1+\vep}{1-\vep}\Big ) \le e^{-c_1\nu \eta n},
    $$
which yields
    $$\Pr \Big (\max_{ i =1, \ldots , n} N({\bf v},i) \ge (1+\nu ) \eta n
    \frac{1+\vep}{1-\vep} \Big ) \le n e^{-c_1\nu \eta
    n}.$$
Since $\vep \le 1/20$, we have $(1+\vep)/(1-\vep) \le 11/9$; and
therefore there exists a constant $\tilde{c}_1$ such that, for $\nu  >0$,
    \eqn{\label{eq-largedev-1}
    \Pr \big(\max_{ i =1, \ldots , n} N({\bf v},i) \ge \frac{11}{9} (1+\nu ) \eta n\big)
    \le n e^{-2\tilde{c}_1\nu \eta n},}
which proves the first statement of Proposition~\ref{prop-badlines}.

For the remainder, first suppose that $\eta n/\log n \ge 176
/\tilde{c}_1$; we may
take $\nu =1/44$ in~(\ref{eq-largedev-1}) to deduce that
    $$
    \Pr \Big(\max_{ i =1, \ldots , n} N({\bf v},i) \ge \frac54 \eta n\Big)
    \le n^{-6}=V^{-3}.
    $$
Now assume that $\eta n/\log n < 176 /\tilde{c}_1$. Note that
$N({\bf v},i)$ is stochastically dominated by $\sum_{r=1}^{\tilde{S}}
Z_r(i)$, \chs{where $\tilde{S}=\Bi (\lceil \eta V\rceil,p)$.
Since $\eta V<176 \log{n} n/\tilde{c}_1$, $N({\bf v},i)$ is stochastically
dominated by a $\Bi (\lceil 176\tilde{c}_1^{-1}n \log n\rceil,p)$ random variable.}
We can perform the moment
generating function and large deviations calculations as in \refeq{mgf-bd1}--
\eqref{eq-largedev} above, to
find that
    $$
    \Pr \Big (\max_{ i =1, \ldots , n} N({\bf v},i) \ge
    \frac{220}{\tilde{c}_1} \log n \Big ) \le n e^{-8 \log n(1+o(1))}=o(V^{-3}).
    $$
Taking $c_2=176/\tilde{c}_1$, $c_3 = 220/\tilde{c}_1$ and
$c_4=1$ completes the proof of
Proposition~\ref{prop-badlines}.
\end{proofof}

\subsection{\chs{Lower bounds on the cluster size and structure}}
\label{S: lower}

In this section we establish corresponding lower bounds on the
cluster size and structure. We first give a proof of
Lemma~\ref{prop-couplingLB}, which will in particular establish a
lower bound on $\Pr (T =\lceil \eta V\rceil)$, that is, a lower
bound on $\Pr (|C({\bf v}_0)| \ge \eta V)$. Our argument will rely
on a coupling with a suitable lower bounding Galton-Watson process
and the estimates of Proposition \ref{prop-badlines}.

\medskip

\begin{proofof}{Lemma~\ref{prop-couplingLB}}
As in Sections~\ref{S: explore} and~\ref{S: upper}, $C_t({\bf
v}_0,i)$ denotes the set of vertices $(i,y)$ \ch{ (with $i\in \{1,
\ldots, n\}$ is fixed and $y\in \{1,\ldots, n\}$ varying)} included
in the exploration of cluster $C({\bf v}_0)$ until time $t$; and
$\hat{C}_t({\bf v}_0,i)$ denotes the set of vertices $(x,i)$
included until time $t$.

Let $c_1$ be as in Proposition~\ref{prop-badlines}. Let $m = \frac54
\max \{\eta n,\frac{176}{c_1}\log n\}$. For each time $t$, let
${\mathcal E}_t$ be the event that $|C_t({\bf v}_0,i)| \le m$ and
$|\hat{C}_t({\bf v}_0,i)| \le m$ for all $i=1, \ldots, n$.

Define $\chs{\tcn} = \cn -2m = 2(n-1-m)$.
Then, provided that the event ${\mathcal E}_t$ occurs, conditionally on
$\phi_t$ (that is, given everything else that may have happened until
time $t$), the number of
vertices added to $C_t({\bf v}_0)$ as a result of exploring $(x_t,y_t)$
stochastically dominates a $\Bi (\chs{\tcn}, p)$ \rv.
We note that $\cn - \chs{\tcn} = O(\eta n + \log n) = o(n)$, since $\eta \to
0$ as $n \to \infty$.

We shall couple our exploration process with a
Galton-Watson process starting with a single individual, where the
offspring distribution is $\Bi (\chs{\tcn}, p)$. The mean offspring
size for this Galton-Watson process is
    $$\chs{\tcn}p \= 1+ \chs{\tvep} =\Big (1-\frac{m}{n-1} \Big )(1+\vep )
    = 1 +
    \vep -O(\eta + n^{-1} \log n )= 1+ \vep (1+o(1)),
    $$
where we have used the fact that $\eta \ll \vep$ and $n^{-2/3} \ll \vep$.
By Proposition~\ref{prop.branch-3}, its survival
probability is $2 \vep + O(\eta + n^{-1}\log n+ \vep^2) = 2\vep(1+o(1))$.

\ch{Recall the exploration process and its corresponding colours as
described in Section \ref{S: explore}.}
Let $F_t$ be the population size \ch{of the $\Bi (\chs{\tcn},p)$
Galton-Watson process} and let $F'_t$ be the set of {\em green} or
active individuals \ch{in the Galton-Watson process} at time $t$.
Also, let $F= \sup_t F_t$ be the total population size \ch{of the
Galton-Watson process. Finally, recall that $C_t({\bf v}_0)$ is the
set of red and green vertices in the exploration of the cluster of
${\bf v}_0$, and $G_t({\bf v}_0)$ the set of green
or active vertices in the cluster exploration. By construction,
$C_t({\bf v}_0)\subseteq C({\bf v}_0)$ for every $t\geq0$.}

By the above, on the event ${\mathcal E}_t$ intersected with the event
that $|C_t({\bf v}_0)| \ge F_t$ and $|G_t({\bf v}_0)| \ge
F'_t$, given \chs{$\phi_t$}, we can couple the Galton-Watson process with
the cluster exploration processes
for another step so that $|C_{t+1}({\bf v}_0)| \ge F_{t+1}$ and
$|G_{t+1}({\bf v}_0)|\ge F'_{t+1}$.

It follows by induction that for each $t$
the \rv{} $|C_t({\bf v}_0)|I[{\mathcal E}_t]$ is
stochastically at least $F_tI[{\mathcal E}_t]$. Hence,
for each $k$,
    \begin{align*}
    \Pr ( |C_t({\bf v}_0)| \ge k) & \ge
    \Pr \big({\mathcal E}_t  \cap \{|C_t({\bf v}_0)| \ge k\}\big)
    \ge \prob_{\cn,\chs{\tcn},p} \big({\mathcal E}_t \cap \{F_t \ge k\}\big)
    \\& \ge  \prob_{\chs{\tcn},p}
    (F_t \ge k) - \Pr (\mathcal E_t^c),
    \end{align*}
where $\prob_{\cn,\chs{\tcn},p}$ denotes the coupling measure. In the second
inequality, we have used the fact that
for every pair of events ${\mathcal A},{\mathcal B}$, we have $\Pr
({\mathcal A} \cap {\cB}) \ge \Pr ({\mathcal A}) -\Pr (\cB^c)$.

By Proposition~\ref{prop-badlines}, $ \Pr (\ch{\mathcal
E^c_t})=O(V^{-3})$, and so, for each $t \le \eta V$, we obtain
    $$
    \Pr ( |C_t({\bf v}_0)| \ge k)
    \ge   \prob_{\chs{\tcn},p} (F_t \ge k) + O(V^{-3}),
    $$
which establishes~\eqref{eqn.lower-1}. Similarly, for each time $t$
and non-negative integer $k$,
    $$
    \Pr ( |G_t({\bf v}_0)| \ge k) \ge \prob_{\chs{\tcn},p}
    (F'_t \ge k) - \Pr (\mathcal E_t^c),
    $$
and, in particular, for each $t \le \eta V$,
    $$
    \Pr (T \ge t) = \Pr ( |G_t({\bf v}_0)| \ge 1) \ge \prob_{\chs{\tcn},p}
    (F'_t \ge 1) + O(V^{-3}).
    $$
Notice that, for $t \le \eta V$,
    \begin{align*}
    \prob_{\chs{\tcn},p}(F'_t =0)
    & \le  \prob_{\chs{\tcn},p} (F'_{\chs{\lceil \eta V\rceil}} =0)
    \le \prob_{\chs{\tcn},p} (F <\infty).
    \end{align*}
In this way we arrive at
    \begin{align*}
    \Pr ( |C_t({\bf v}_0)| \ge k) & \ge \prob_{\chs{\tcn},p} (F_t \ge k) + O(V^{-3})\\
    & \ge \prob_{\chs{\tcn},p}(F_t \ge k, F'_{t} > 0)+ O(V^{-3})\\
    & = \prob_{\chs{\tcn},p} (F'_{t} > 0) - \prob_{\chs{\tcn},p} (F'_t>0,F_t < k) +
    O(V^{-3})\\
    & \ge \prob_{\chs{\tcn},p} (F'_{t} > 0) - \Pr (\Bi
    (t\chs{\tcn},p)< k) + O(V^{-3})\\
    & \ge \prob_{\chs{\tcn},p} (F =\infty) - \Pr (\Bi
    (t\chs{\tcn},p)< k) + O(V^{-3}),
    \end{align*}
since on the event that the process is alive at time $t$ and the event
${\mathcal E}_t$ occurs,
we can couple the number of vertices added at all steps until $t$ so
that it is at least as large as
a sum of $t$ independent binomials $\Bi (\chs{\tcn},p)$.

Hence,  \chs{also using the facts that $p\geq 1/\cn$ and
$|\cn-\chs{\tcn}|=o(\cn)$, for every constant $\gd \in (0,1)$ there
is a constant $\alpha > 0$ such that}
    \begin{align}
    \Pr \Big( |C({\bf v}_0)| \ge (1-\gd ) \eta V\Big) & \ge
    \Pr ( |C_{\eta V} ({\bf v}_0)| \ge (1-\gd ) \eta V) \label{eq-later}
    \\
    & \ge \prob_{\chs{\tcn},p}(F =\infty)
    - \Pr \big(\Bi (\eta V \chs{\tcn},p) < (1-\gd) \eta V\big)
    +O(V^{-3}) \nonumber
    \\
    & = 2\vep +O(\eta+n^{-1}\log n + \vep^2) +e^{-\alpha \eta
    V}+O(V^{-3}). \nonumber
    \end{align}
But equally, we could run the exploration process until time
$(1+\gd)\eta V$ to obtain a cluster of size $\eta V$ \whp, that is,
we could use the above with $\eta$ replaced by $\eta/(1-\gd)$
to obtain that
    \eqn{
    \Pr ( |C({\bf v}_0)| \ge \eta V)\ge 2\vep
    +O(\eta + n^{-1}\log n + \vep^2) + e^{-\alpha \eta V}+O(V^{-3}).
    }
This establishes~\eqref{eqn.lower-2}, thus completing the proof of
\chs{Lemma}~\ref{prop-couplingLB}, and hence also the proof of
Proposition~\ref{lem-CCS}.
\end{proofof}

\smallskip

Let us call a horizontal line {\it good}
if it contains at least $\eta V/(4n)=\eta n/4$ elements in
$C ({\bf v_0})$ along that line, and {\it bad} otherwise.
We shall now prove Proposition~\ref{lem-goodlines}, thus establishing
a lower bound on the number of good lines.
\medskip

\begin{proofof}{Proposition~\ref{lem-goodlines}}
As earlier, for a vertex
${\bf v}_0$ and $i \in \{1, \ldots , n\}$,
the random variable $C_t({\bf v}_0,i)$
denotes the number of elements of the
$i$-th horizontal line contained in $C_t({\bf v}_0)$,
the part of $C({\bf v}_0)$ obtained by running the exploration
process until time $t$. Also, $C({\bf v}_0,i)$ is the number
of elements of the $i$-th horizontal line in $C({\bf v}_0)$ and $N({\bf
v}_0,i)$ is the number of such elements included in $C({\bf v}_0)$ until
time $\lceil \eta V\rceil$.

Let $c_2$ be as in Proposition~\ref{prop-badlines}, \chfin{statement (1),} and choose
$C=c_2$. By Proposition~\ref{prop-badlines}, statement~(1), we have
    $$
    \Pr \Big(\max_{{\bf v}_0} \max_{i =1, \ldots , n}: N({\bf v}_0,i) \le
    \frac54 \eta n\Big) =O(V^{-3}).
    $$

\chs{We select a vertex ${\bf v}_0$. Let ${\mathcal A}$ be the event
that $\{\max_{i =1, \ldots , n} N({\bf v}_0,i) \le \frac54 \eta
n\}$. Let also ${\mathcal B}$ (``${\mathcal B}$'' for ``bad'') be
the event that fewer than $3n/4$ lines are good for the cluster
$C({\bf v}_0)$. On the event that $|C({\bf v}_0)|\geq \eta V$, we
have $|C_{\sss \lceil \eta V\rceil}({\bf v}_0)|\geq |R_{\sss \lceil
\eta V\rceil}({\bf v}_0)| =\lceil \eta V\rceil$. It follows that we
only need to show that
    \eqn{\label{eq-one-good}
    \P_p\big({\mathcal B} \cap \{|C({\bf v}_0)|\geq \eta V\}\big)
    =\P_p\big({\mathcal B} \cap \{|C_{\sss \lceil \eta V\rceil}({\bf v}_0)|\geq \eta V\}\big)
    =o(V^{-1}).
    }

Indeed, summing over all vertices ${\bf v}_0$ we may deduce
from~(\ref{eq-one-good}) that \whp{} there is no ${\bf v}_0$ such
that $|C({\bf v}_0)|\geq \eta V$ and fewer than $3n/4$ lines are
good for $C({\bf v}_0)$. In order to establish~(\ref{eq-one-good}),
we shall show that
    \eqn{\label{claim:line}
    \Pr({\mathcal B} \cap
    \{|C_{\sss \lceil \eta V\rceil}({\bf v}_0)|\geq \eta V\}) \le \Pr({\mathcal A}^c).
    }
Clearly, $|C({\bf v}_0,i)| \ge N({\bf v}_0,i)$ for every $i$. Let us write
$g_{{\bf v}_0}$ and
$b_{{\bf v}_0}$ respectively for the number of good and bad lines in
$C_{\sss \ch{\lceil \eta V\rceil}} ({\bf v}_0)$.

On the event ${\mathcal A}$, the explored cluster
 $C_{\sss \lceil \eta V\rceil}({\bf v}_0)$ at time $\eta V$ contains
at most $5\eta n/4$ elements of every good line and at the same time
has size at least $\eta V$. Hence, using also that $g_{{\bf
v}}=n-b_{{\bf v}_0}$, on ${\mathcal A} \cap \{|C_{\sss \lceil \eta
V\rceil}({\bf v}_0)|\geq \eta V\}$,
    \begin{align*}
    \eta V & \leq |C_{\sss \eta V} ({\bf v}_0)|
    \leq \frac54 \eta n g_{{\bf v}_0} + \frac{1}{4}\eta n b_{{\bf
    v}_0}
    =
    \eta n g_{{\bf v}_0}+ \frac14 \eta V,
    \end{align*}
which gives
    \begin{align*}
    \frac{3}{4} \eta V  & \le \eta n g_{{\bf v}_0}
    \end{align*}
and hence
    \begin{align*}
    g_{{\bf v}_0} & \ge \frac34 n.
    \end{align*}
In other words, on ${\mathcal A} \cap \{|C_{\sss \lceil \eta V\rceil}({\bf v}_0)|\geq \eta V\}$,
the number of good lines is at least $3n/4$, which
means that
    \eqn{
    \Pr ({\mathcal B} \cap {\mathcal A} \cap
    \{|C_{\sss \lceil \eta V\rceil}({\bf v}_0)|\geq \eta V\})=0,
    }
and so establishes claim~(\ref{claim:line}). Then, from
Proposition~\ref{prop-badlines}, we see that
    \eqn{
    \lbeq{prop5.1-res}
    \Pr ({\mathcal B} \cap \{|C_{\sss \lceil \eta V\rceil}({\bf v}_0)|\geq \eta V\})\leq\Pr
    ({\mathcal A}^c) = O(V^{-3}),
    }
as required. This completes the proof of Proposition~\ref{lem-goodlines}.}
\end{proofof}

\medskip

\section{Concentration of measure for the number of vertices in large
clusters} \label{S: conc}

\ch{This section contains our proof of Proposition~\ref{lem-varZ}.
In outline, the goal is to establish concentration of measure for
$Z_{\sss \geq \alpha^{-2}}$, for an appropriate choice of $\alpha\ll
\vep$ to be determined below. This will be carried out by second
moment methods, in a slightly unusual way, as we explain now.

For every $\ell$, define the centered versions of the
random variables $Z_{\sss \geq \ell}$ by
    \eqn{
    \lbeq{Zbar-def}
    \Zbar_{\sss \geq \ell}
    =Z_{\sss \geq \ell}-
    \expec_p[Z_{\sss \geq \ell}].}
The entire proof revolves around two scales of magnitude, denoted by
$\Nul$ and $\NV$. The value $\NV$ is the large scale, and
corresponds to $\vepnull^{-2}$ in Proposition~\ref{lem-varZ}. The
value $\Nul$ is the smaller scale, and will be determined below. The
scales $\Nul$ and $\NV$ are related through a positive integer $I$
defined by
    \eqn{
    \lbeq{NV-choice}
    \NV = \Nul 2^I.
    }
With this notation, proving Proposition~\ref{lem-varZ} amounts to
establishing that
    \eqn{
    \prob_p\Big(|\Zbar_{\sss \geq \NV}|\geq
    \delta \vep V\Big)=o(1).
    }
We first observe that
    $$|\Zbar_{\sss \geq \NV}| \le |\Zbar_{\sss \geq \Nul}| +
    \sum_{i=0}^{I-1} |\Zbar_{\sss
    \geq 2^{i+1}\Nul}- \Zbar_{\sss \geq 2^{i}\Nul}|.
    $$
The goal is now to establish sufficient bounds on the variances of
the above random variables, so that we can prove that $\Zbar_{\sss
\geq \NV}$ is concentrated. For this, we choose a sequence
$\{\delta_i\}_{i=0}^{I-1}$ such that each $\delta_i>0$ and
$\sum_{i=0}^{I-1} \delta_i \leq \frac{\delta}{2}$. If $|\Zbar_{\sss
\geq \NV}|\geq \delta \vep V$, then either $|\Zbar_{\sss \geq
\Nul}|\geq \delta \vep V/2$, or $|\Zbar_{\sss \geq 2^{i+1}\Nul}-
\Zbar_{\sss \geq 2^{i}\Nul}|\geq \delta_i \vep V$ for some $0\leq
i\leq I-1$. Consequently,
    \eqn{
    \lbeq{decomp-conc}
    \prob_p\Big(|\Zbar_{\sss \geq \NV}|\geq
    \delta \vep V\Big)
    \leq \prob_p\big(|\Zbar_{\sss \geq \Nul}|\geq
    \delta \vep V/2\big)
    +\sum_{i=0}^{I-1}
    \prob_p\Big(|\Zbar_{\sss \geq 2^{i+1}\Nul}-
    \Zbar_{\sss \geq 2^{i}\Nul}|\geq
    \delta_i \vep V\Big),
    }
and we are going to upper bound each term on the right hand side
separately. Our argument relies on estimating the variance of
$Z_{\sss \geq \Nul}$ and those of the differences $Z_{\sss \geq
2^{i+1}\Nul}-Z_{\sss \geq 2^{i}\Nul}$. This is accomplished in
Section \ref{sec-varest} -- see Lemmas~\ref{prop-Zsmall} and  Lemma
\ref{prop-Zinbetween}. The variance estimates impose various
restrictions on $\Nul$ and $\NV$; in Section \ref{sec-PfvarZ} we
show that these can be satisfied as long as $\vep^3 V\gg \log{n}$,
which establishes Proposition \ref{lem-varZ}. The key to the proof
is to choose $\Nul$, $\NV$ and $\{\delta_i\}_{i=0}^{I-1}$ so as to
ensure adequate concentration of measure.}

\ch{The remainder of this section is organised as follows. In
Section \ref{sec-clustertails} we bound the cluster tail bounds of
the form $\Pr (|C({\bf v}_0)| \in [\ell ,2\ell])$; these are needed
to estimate the distribution of the random variables $Z_{\sss \geq
2^{i+1}\Nul}-Z_{\sss \geq 2^{i}\Nul}$. Here we shall make use of
Galton-Watson processes estimates and comparisons established in
Sections \ref{S: branching}--\ref{S: comparison}. Then, in
Section~\ref{sec-varest}, we upper bound the variances of $Z_{\sss
\geq \Nul}$ and $Z_{\sss \geq 2^{i+1}\Nul}-Z_{\sss \geq 2^{i}\Nul}$.
Finally, in Section \ref{sec-PfvarZ} we complete our proof of
Proposition~\ref{lem-varZ}.}

\subsection{Key ingredients}
\label{sec-clustertails}
As before, for a positive integer $N$ and an edge
probability $p$, $\prob_{N,p}$ denotes the probability
measure corresponding to the Galton-Watson process where the family size is
a $\Bi (N,p)$ random variable; also, $F$ is the total
progeny.

The remainder of this \chs{section} is devoted to establishing a
bound on the cluster tail crucial to the arguments in
Sections~\ref{sec-varest} and  Section \ref{sec-PfvarZ}. Recall that
$\cn=2(n-1)$, \ch{choose a positive integer $\ell$}, and suppose
that $\chs{\tcn}=\chs{\tcn}(n)$ satisfies $\cn - \chs{\tcn}
=\ch{O(\log n+\ell/n)}$ \chs{for some $\ell=o(\vep V)$.} Suppose
further that $\vep = \vep (n)=\cn p -1 \to 0$ such that $V^{-1/3}
\ll \vep \ll 1$ for $V=n^2$ sufficiently large. \ch{Then
$|\vep-\tvep|=p|\cn-\chs{\tcn}|=O(\log{n}/n+\ell/n^2)=o(\vep)$,
\chs{since} $\ell=o(\vep V)$, and so we may use the results of
Proposition~\ref{prop.branch-1}. Hence, as long as $\ell=o(\vep V)$,
we have, uniformly in $n$,}
    \eqn{
    \label{ingre1-diffBP}
    |\prob_{\cn, p}(F\geq \ell)-\prob_{\chs{\tcn}, p}(F \geq \ell)|
    \leq C \Big(p|\cn-\chs{\tcn}|+\frac{1}{n\ell^{1/2}}+\frac{1}{\ell^3}\Big).
    }
We shall use inequality~(\ref{ingre1-diffBP}) in the following lemma
to identify the cluster tail distribution more precisely.

\begin{lemma}[Bound on the cluster tail]
\label{lem.tail}
Set $p=p_c +\frac{\vep}{\cn}$.
Let $\chs{V^{-1/3}\ll \vep \ll 1}$, and
let $\ell \in \bbN$ satisfy
$\ell \le V^{2/3}$ and \ch{$\ell \ll \vep V$.}
Then there exists a constant $C$ such that, for $n$ sufficiently large,
    \eqn{
    \lbeq{ingre2-clustail}
    \Pr (|C({\bf v}_0)| \in [\ell ,2\ell])\leq \frac{C}{\sqrt{\ell}}.
    }
\end{lemma}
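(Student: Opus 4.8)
The plan is to sandwich the cluster-tail probability between the two branching-process comparisons we already have — the stochastic upper bound \refL{prop-couplingUB} and the lower bound \refL{prop-couplingLB} — and thereby reduce the estimate to the total-progeny bounds of Section~\ref{S: branching}. The only substantive part is then to check that every error term is $O(\ell^{-1/2})$, and this is precisely where the hypothesis $\ell\le V^{2/3}$ is used.

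Concretely, I would start from the identity
\[
  \Pr(|C({\bf v}_0)| \in [\ell, 2\ell]) = \Pr(|C({\bf v}_0)| \ge \ell) - \Pr(|C({\bf v}_0)| \ge 2\ell+1),
\]
bound the first term above by $\prob_{\cn,p}(F \ge \ell)$ via \refL{prop-couplingUB}, and bound the second term below by $\prob_{\tcn,p}(F \ge 2\ell+1) + O(V^{-3})$ via \refL{prop-couplingLB} applied with $2\ell+1$ in place of $\ell$ (legitimate because $2\ell+1 \ll \vep V$), where $\tcn$ satisfies $\cn - \tcn = O(\log n + \ell/n)$. This gives
\begin{align*}
  \Pr(|C({\bf v}_0)| \in [\ell, 2\ell])
  &\le \bigl[\prob_{\cn,p}(F \ge \ell) - \prob_{\cn,p}(F \ge 2\ell+1)\bigr]\\
  &\quad + \bigl[\prob_{\cn,p}(F \ge 2\ell+1) - \prob_{\tcn,p}(F \ge 2\ell+1)\bigr] + O(V^{-3}).
\end{align*}
The first bracket is exactly $\prob_{\cn,p}(F \in [\ell,2\ell])$, which is $O(\ell^{-1/2})$ by Proposition~\ref{prop.branch-2} (applicable since $\cn p-1 \asymp \vep$ tends to $0$ and exceeds $\cn^{-2/3}$, both consequences of $V^{-1/3}\ll\vep\ll 1$). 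The second bracket is controlled by the already-established inequality~\refeq{ingre1-diffBP} (equivalently Proposition~\ref{prop.branch-1}), used with $2\ell+1$ in place of $\ell$ and with the above $\tcn$: it is at most $C\bigl(p|\cn - \tcn| + (n\sqrt{2\ell+1})^{-1} + (2\ell+1)^{-3}\bigr)$.

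It then remains only to verify that every surviving error term is $O(\ell^{-1/2})$. The contributions $(n\sqrt{2\ell+1})^{-1}$, $(2\ell+1)^{-3}$ and $V^{-3}$ are trivially of this order. For the remaining term, $p|\cn - \tcn| = O(\ell n^{-2} + n^{-1}\log n)$, and this is where $\ell \le V^{2/3} = n^{4/3}$ is decisive: it forces simultaneously $\ell n^{-2} \le n^{-2/3}$, $n^{-1}\log n \le n^{-2/3}$ (for $n$ large), and $\ell^{-1/2} \ge n^{-2/3}$, so $p|\cn - \tcn| = O(n^{-2/3}) = O(\ell^{-1/2})$. Along the way one records the routine side conditions needed to invoke the branching-process lemmas — namely $\tcn p > 1$ and $|\vep - \tvep| = p|\cn - \tcn| = o(\vep)$ — both of which follow from $\ell n^{-2} + n^{-1}\log n = o(\vep)$, itself a consequence of $\vep \gg V^{-1/3}$ and $\ell \le V^{2/3}$. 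Summing the pieces yields $\Pr(|C({\bf v}_0)| \in [\ell,2\ell]) \le C\ell^{-1/2}$. The argument is essentially arithmetic and I do not foresee a genuine obstacle; the one point worth flagging is the tension between the defect $p|\cn - \tcn| \asymp \ell/n^2$ introduced by the lower-bounding $\Bi(\tcn,p)$ Galton--Watson process having fewer potential offspring than $\cn$, and the target rate $\ell^{-1/2}$. These two are of the same size exactly at $\ell \asymp V^{2/3}$, which is why the lemma is confined to $\ell \le V^{2/3}$ and why this method (and presumably the statement itself, when $\vep \gg V^{-1/3}$) cannot reach substantially larger $\ell$.
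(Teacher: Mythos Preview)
Your proof is correct and follows essentially the same route as the paper: both arguments sandwich the cluster tail using \refL{prop-couplingUB} above and \refL{prop-couplingLB} below, then invoke Propositions~\ref{prop.branch-1} and~\ref{prop.branch-2} and check that the defect $p|\cn-\tcn|=O(\ell/n^2+n^{-1}\log n)$ is $O(\ell^{-1/2})$ precisely because $\ell\le V^{2/3}$. The only cosmetic difference is that the paper applies Proposition~\ref{prop.branch-1} first (passing from $\tcn$ to $\cn$ at level $2\ell$) and then Proposition~\ref{prop.branch-2}, whereas you add and subtract $\prob_{\cn,p}(F\ge 2\ell+1)$ to isolate the two contributions explicitly; the ingredients and the arithmetic are identical.
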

\begin{prf}
By \chs{Lemma}~\ref{prop-couplingUB}, 
    \begin{align*}
    \Pr (|C({\bf v}_0)| \ge \ell ) & \le \prob_{\cn,p} (F \ge \ell).
    \end{align*}
Further, by \chs{Lemma}~\ref{prop-couplingLB}, $C$ can
be chosen large enough that
    \begin{align*}
    \Pr (|C({\bf v}_0)| \ge 2\ell ) & \ge \prob_{\chs{\tcn},p} (F \ge 2\ell)+O(V^{-3}),
    \end{align*}
where $\chs{\tcn} = \cn -\frac52 \max \{\ell n^{-1},C \log n\}$.
Let $\chs{\tvep} = \chs{\tcn}p-1$ and note that $|\vep -\chs{\tvep}| \le \frac{C}{n^2}
(\ell + n\log n)$ (after a suitable adjustment of $C$).
Since $\vep\geq V^{-1/3}=n^{-2/3}$, our assumptions
on $\ell$ imply that $|\vep-\chs{\tvep}| =o(\vep)$.
By Propositions~\ref{prop.branch-1} and~\ref{prop.branch-2},
    \begin{align*}
    \prob_{\chs{\tcn},p} (F \ge 2\ell) & \ge \prob_{\cn,p} (F \ge 2\ell) +
    O\big(|\vep -\chs{\tvep}|+\frac{1}{n\ell^{1/2}}+\frac{1}{\ell^3}\big)\\
    & \ge \prob_{\cn,p} (F \ge \ell) + O(|\vep-\chs{\tvep}|) + O(\ell^{-1/2}).
    \end{align*}
 It follows that
    \begin{align*}
    \Pr (|C({\bf v}_0)| \ge \ell ) - \Pr (|C({\bf v}_0)| \ge 2\ell )
    \le O(|\vep-\chs{\tvep}| ) + O(\ell^{-1/2}),
    \end{align*}
and we only need to show that $|\vep-\chs{\tvep}|$ is
$O(l^{-1/2})$. This is equivalent to showing that both
$\frac{\ell}{n^2}$ and $\frac{\log n}{n}$ are $O(l^{-1/2})$.
The condition $\ell\leq V^{2/3}$ is equivalent to
$\frac{\ell}{n^2}\leq \ell^{-1/2}$. The bound
$\frac{\log{n}}{n}\leq \ell^{-1/2}$ holds
when $\ell\leq n^2/(\log{n})^2$; as $\ell\leq V^{2/3}=n^{4/3}$,
this is also true for $n$ sufficiently large.
\end{prf}

\subsection{Variance estimates}
\label{sec-varest}

Lemmas \ref{prop-Zsmall} and
\ref{prop-Zinbetween} below contain
variance estimates essential to our proof of
Proposition~\ref{lem-varZ}.

\begin{lemma}[Variance of the number of vertices in moderate clusters]
\label{prop-Zsmall}
Set $p=p_c +\frac{\vep}{\cn}$. Suppose that \ch{$V^{-1/3}\ll \vep\ll 1$.}
Choose $N$ such that $N=o(\sqrt{V})$ and $N=o(\vep^2 V)$. Then
    \eq
    \var_p(Z_{\sss \geq N})=o\big((\vep V)^2\big).
    \en
\end{lemma}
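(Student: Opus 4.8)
The plan is to run a second moment argument on the complementary quantity. Since $Z_{\sss\geq N}=V-\sum_{{\bf v}}I[|C({\bf v})|<N]$, it is enough to bound $\sum_{{\bf v},{\bf w}}\Cov_p\big(I[|C({\bf v})|<N],I[|C({\bf w})|<N]\big)$. The diagonal terms contribute at most $V\prob_p(|C({\bf v}_0)|\geq N)$, which by Lemma~\ref{prop-BP} is $O(\vep V+V/\sqrt N)=o((\vep V)^2)$. For ${\bf v}\neq{\bf w}$ I would split according to whether ${\bf v}\leftrightarrow{\bf w}$. The ``same cluster'' part sums to $\sum_{{\bf v}}\E_p\big[(|C({\bf v})|-1)\,I[|C({\bf v})|<N]\big]\leq V\,\E_p\big[|C({\bf v}_0)|\,I[|C({\bf v}_0)|<N]\big]\leq VN$, which is $o((\vep V)^2)$ precisely because $N=o(\vep^2 V)$.

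The crux is the ``different clusters'' contribution $\sum_{{\bf v}\neq{\bf w}}\big(\prob_p({\bf v}\not\leftrightarrow{\bf w},\,|C({\bf v})|<N,\,|C({\bf w})|<N)-(1-q)^2\big)$, where $q:=\prob_p(|C({\bf v}_0)|\geq N)$. Revealing $C({\bf v})$ and then $C({\bf w})$ gives, for disjoint $A_0\ni{\bf v}$, $B_0\ni{\bf w}$, the exact identity
\[
\prob_p\big(C({\bf v})=A_0,\,C({\bf w})=B_0\big)=(1-p)^{-e(A_0,B_0)}\,\prob_p\big(C({\bf v})=A_0\big)\,\prob_p\big(C({\bf w})=B_0\big),
\]
where $e(A_0,B_0)$ is the number of edges of $H(2,n)$ between $A_0$ and $B_0$. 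The ``$1$'' part of $(1-p)^{-e(A_0,B_0)}$, summed over disjoint pairs with $|A_0|,|B_0|<N$, equals $(1-q)^2$ minus a nonnegative term, so the above difference is at most $\E\big[\big((1-p)^{-e(C_{\pi_1}({\bf v}),C_{\pi_2}({\bf w}))}-1\big)\,I[|C_{\pi_1}({\bf v})|<N,\,|C_{\pi_2}({\bf w})|<N]\big]$ with $\pi_1,\pi_2$ independent copies of the percolation. Using $(1-p)^{-e}-1\leq 2ep$ (valid when $ep\leq\tfrac12$) and $e(A_0,B_0)=\sum_{\ell}|A_0\cap\ell|\,|B_0\cap\ell|$, with $\ell$ ranging over the $2n$ coordinate lines, independence of $\pi_1,\pi_2$ lets the sum over all ${\bf v},{\bf w}$ of $\E[e(\cdot,\cdot)\,I[\cdot]]$ collapse into $\sum_{\ell}\big(\sum_{{\bf v}}f_{{\bf v}}(\ell)\big)^2$, where $f_{{\bf v}}(\ell):=\E_p\big[|C({\bf v})\cap\ell|\,I[|C({\bf v})|<N]\big]$; and by transitivity, together with the identity ${\bf b}\in C({\bf v})\iff{\bf v}\in C({\bf b})$, one gets $\sum_{{\bf v}}f_{{\bf v}}(\ell)=n\mu$ for every $\ell$, where $\mu:=\E_p\big[|C({\bf v}_0)|\,I[|C({\bf v}_0)|<N]\big]$. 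Hence this contribution is $O\big(p\cdot n\cdot (n\mu)^2\big)=O(p\,n^{3}\mu^{2})=O(V\mu^{2})$; and by Abel summation and Lemma~\ref{prop-BP}, $\mu\leq\sum_{k<N}\prob_p(|C({\bf v}_0)|\geq k)=O(\vep N+\sqrt N)$, so $V\mu^{2}=O(\vep^{2}N^{2}V+NV)$, which is $o((\vep V)^{2})$ exactly under the two hypotheses $N=o(\sqrt V)$ and $N=o(\vep^2 V)$.

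The step I expect to be the main obstacle is legitimising the linearisation $(1-p)^{-e}-1\leq 2ep$, i.e.\ controlling the (small but not negligible) event that $e(C_{\pi_1}({\bf v}),C_{\pi_2}({\bf w}))$ is of order $1/p$ or larger, together with the higher order terms of the expansion. For this I would invoke Proposition~\ref{prop-badlines}: with probability $1-O(V^{-3})$ every cluster of size at most $N$ meets each coordinate line in $O(\log n)$ vertices, so on that event $e(C_{\pi_1}({\bf v}),C_{\pi_2}({\bf w}))=O(N\log n)$ deterministically; moreover a cluster conditioned to have size less than $N$ meets a line not through its own root with probability only $O(p)$ per line (a subcritical per-line comparison of the type used to prove Proposition~\ref{prop-badlines}), so the expected number of lines met by both clusters is $O(1)$, and a Markov bound shows $e$ exceeds $1/p$ with probability $(\log n)^{O(1)}/\sqrt V$, contributing a lower-order term. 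On the complementary $O(V^{-3})$-probability event the crude deterministic bound $e<N^{2}=o(V)$, together with $1-p\geq\tfrac12$, absorbs the remainder. Assembling the diagonal, same-cluster and different-cluster estimates then yields $\var_p(Z_{\sss\geq N})=o((\vep V)^{2})$.
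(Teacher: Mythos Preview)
Your overall structure matches the paper's proof: pass to $Z_{<N}$, split the covariance sum according to whether ${\bf v}_1\in C({\bf v}_0)$, bound the connected part by $V\E_p[|C({\bf v}_0)|I[|C({\bf v}_0)|<N]]$, and bound the disconnected part by $O(V\mu^2)$ with $\mu=O(\vep N+\sqrt N)$, yielding $O(V\vep^2N^2+VN)=o((\vep V)^2)$ under the two hypotheses. The final arithmetic is identical.

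The difference is in how you reach the $O(V\mu^2)$ bound for the disconnected part. The paper simply quotes \cite[inequality~(9.7)]{bchss1}, namely
\[
S_{{\bf v}_0\ncs{\bf v}_1}\leq p\sum_{\{{\bf u},{\bf v}\}}\E_p\big[|C({\bf u})|\,|C({\bf v})|\,I[|C({\bf u})|<N,\,|C({\bf v})|<N,\,{\bf v}\notin C({\bf u})]\big],
\]
and then uses that, conditionally on ${\bf v}\notin C({\bf u})$, the two cluster sizes are dominated by independent branching-process progenies. You instead derive things from the exact identity $\prob_p(C({\bf v})=A_0,C({\bf w})=B_0)=(1-p)^{-e(A_0,B_0)}\prob_p(C({\bf v})=A_0)\prob_p(C({\bf w})=B_0)$ and try to linearise $(1-p)^{-e}-1\leq 2ep$.

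There is a genuine gap in your treatment of the linearisation. Even on the high-probability event from Proposition~\ref{prop-badlines} you only get $e=O(N\log n)$, hence $ep=O(N\log n/n)$, which for $N$ close to $\sqrt V=n$ is $o(\log n)$ rather than $O(1)$; so the bound $(1-p)^{-e}-1\leq 2ep$ is not available. Your proposed refinement, that a cluster of size $<N$ meets a fixed line not through its root with probability $O(p)$, is incorrect: the expected number of lines met by such a cluster is $\Theta(\mu)$, so the average per-line probability is $\Theta(\mu/n)$, and for $N$ near $\sqrt V$ this is of order $n^{-1/2}$, not $n^{-1}$. The Markov-bound argument built on that claim therefore does not go through.

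The clean repair is to avoid linearisation altogether. From $(1-p)^{-e}-1\leq \frac{p}{1-p}\,e\,(1-p)^{-e}$ (valid for all $e\geq 0$, by $f(e)-f(0)\leq e\,f'(e)$ for the convex function $f(e)=(1-p)^{-e}$), and the fact that multiplying by $(1-p)^{-e}$ converts your independent-copies expectation back into the single-copy expectation, you obtain directly
\[
S_{{\bf v}_0\ncs{\bf v}_1}\leq \frac{p}{1-p}\sum_{{\bf v},{\bf w}}\E_p\big[e(C({\bf v}),C({\bf w}))\,I[|C({\bf v})|,|C({\bf w})|<N,\,{\bf v}\not\leftrightarrow{\bf w}]\big],
\]
which, after summing over endpoints of the cross-edges, is exactly the paper's cited inequality. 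From there your own computation (or the paper's branching-process domination) gives $O(V\mu^2)$ with no further obstacles.
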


\begin{prf}
First note that $\var_p(Z_{\sss \ge N})
=\var_p(Z_{\sss <N}),$ where
    $$Z_{\sss <N}=V-Z_{\sss \ge N}
    =\sum_{{\bf v}} I[|C({\bf v})| < N].
    $$
We expand $\var_p(Z_{\sss <N})$ as
    \eqn{
    \var_p(Z_{\sss <N})
    =\sum_{{\bf v}_0,{\bf v}_1} \big[\Pr \big( |C({\bf v}_0)| < N, |C({\bf v}_1)| < N\big)-
    \Pr ( |C({\bf v}_0)| < N)^2\big].
    }
We separate each term involving distinct ${\bf v_0}$ and ${\bf v_1}$
into two, according to whether or not ${\bf v}_1\in  C({\bf v}_0)$. We
can then write
    \eqan{
    \var_p(Z_{\sss <N})
    &=S_{\sss {\bf v}_0\conns {\bf v}_1}+S_{\sss {\bf v}_0\ncs {\bf v}_1},
    }
where $S_{\sss {\bf v}_0\conns {\bf v}_1}=S_{\sss {\bf v}_0\conns
{\bf v}_1}(N)$, $S_{\sss {\bf v}_0\ncs {\bf v}_1}= S_{\sss {\bf v}_0\ncs {\bf
v}_1} (N)$ and
    \eqan{
    S_{\sss {\bf v}_0\conns {\bf v}_1}&=\sum_{{\bf v}_0,{\bf v}_1}
    \Pr ( |C({\bf v}_0)| < N, {\bf v}_1\in  C({\bf v}_0)),\\
    S_{\sss {\bf v}_0\ncs {\bf v}_1}&=\sum_{{\bf v}_0,{\bf v}_1} \Big [
    \Pr ( |C({\bf v}_0)| < N,
    |C({\bf v}_1)| < N, {\bf v}_1 \not \in  C({\bf v}_0))
    -\Pr (|C({\bf v}_0)| < N)^2 \Big ].
    }
It is easily seen that
    \eqn{
    S_{\sss {\bf v}_0\conns {\bf v}_1}
    =V\expec_p\big[|C({\bf v}_0)| I[|C({\bf v}_0)|<N]\big],
    }
and we upper bound
    \eqan{
    \expec_p\big[|C({\bf v}_0)| I[|C({\bf v}_0)|<N]\big]
    &=\sum_{l=1}^N \prob_p(l\leq |C({\bf v}_0)|<N)
    \leq \sum_{l=1}^N \prob_p(|C({\bf v}_0)|\geq l)\nonumber\\
    &\leq C\sum_{l=1}^N \big(\vep +\frac{1}{\sqrt{l}}\big),
    }
where the last inequality follows from Lemma~\ref{prop-BP}. It follows
that
    \eqn{
    S_{\sss {\bf v}_0\conns {\bf v}_1}
    =O(VN\vep +V\sqrt{N})=o(\vep^2V^2),
    }
provided $N=o(\vep V)$ and $N=o(\vep^4 V^2)$. When $\vep^3 V \gg 1$
then $\vep V \ll \vep^4 V^2$, so only the first constraint on $N$ is
binding, i.e. $S_{\sss {\bf v}_0\conns {\bf v}_1}=o(\vep^2V^2)$ as
long as $N=o(\vep V)$.

To upper bound $S_{\sss {\bf v}_0\ncs {\bf v}_1}$ note that,
by \cite[inequality~(9.7)]{bchss1},
    \eqan{
    S_{\sss {\bf v}_0\ncs {\bf v}_1}&\leq p\sum_{\{{\bf u}, {\bf v}\}}
    \expec_p\Big[|C({\bf u})| |C({\bf v})| I[|C({\bf u})|<N, |C({\bf v})|<N,
    {\bf v} \not \in  C({\bf u})]\Big],
    }
where the summation is over all edges $\{{\bf u},{\bf v}\}$ of $H(2,n)$.
We can estimate this similarly to $S_{\sss {\bf
v}_0\conns {\bf v}_1}$ above, and find that
    \eqan{
    S_{\sss {\bf v}_0\ncs {\bf v}_1}&\leq p\sum_{({\bf u}, {\bf v})}
    \sum_{l_1, l_2=1}^N
    \prob_p\Big(l_1\leq |C({\bf u})|<N, l_2\leq |C({\bf v})|<N,
    {\bf v} \not \in  C({\bf u})\Big)\nonumber\\
    &\leq p\sum_{({\bf u}, {\bf v})}
    \sum_{l_1, l_2=1}^N
    \prob_p\Big(|C({\bf u})|\geq l_1, |C({\bf v})|\geq l_2,
    {\bf v} \not \in  C({\bf u})\Big).
    }
Since ${\bf v} \not \in  C({\bf u})$, $|C({\bf u})|$ and $|C({\bf
v})|$ are each {\em independently} of one another stochastically
dominated by the total progeny of a $\Bi (\cn,p)$ Galton-Watson
process. (To see this in more detail, think of first constructing
the cluster of ${\bf u}$, and subsequently construct the cluster of
${\bf v}$ in the smaller graph with $C({\bf u})$ removed.) Using
Lemma~\ref{prop-BP}, we then see that, since $\cn p$ is bounded
above as $n \to \infty$,
    \eqan{
    S_{\sss {\bf v}_0\ncs {\bf v}_1}&\leq C\cn p V
    \sum_{l_1, l_2=1}^N
    \Big(\vep +\frac{1}{\sqrt{l_1}}\Big)\Big(\vep +\frac{1}{\sqrt{l_2}}\Big)\nonumber\\
    &\leq C V  \big(\vep N +\sqrt{N}\big)^2
    \leq O( V \vep^2 N^2+ V N).
    }
Thus, as long as $N=o(\sqrt{V})$ and $N=o(\vep^2 V)$,
    \eqn{
    S_{\sss {\bf v}_0\ncs {\bf v}_1}
    =o\big((\vep V)^2\big),
    }
which completes the proof.
\end{prf}

\smallskip

\noindent
\begin{lemma}
[Variance of the number of vertices in intermediate clusters]
\label{prop-Zinbetween}
Set $p=p_c +\frac{\vep}{\cn}$.
Assume that \ch{$V^{-1/3}\ll \vep\ll 1$,} and take $N \in
\bbN$ such that  $N\leq V^{2/3}$ and
\ch{$N\ll \vep V$.}
Then
    \eq
    \lbeq{VarZbd}
    \var_p(Z_{\sss \geq N}-Z_{\sss \geq 2N})\leq
    \frac{C V^2}{\sqrt{N}}\Big(\frac{\log{n}}{n}\bigvee
    \frac{N}{V}\bigvee \frac{1}{N^3}\Big).
    \en
\end{lemma}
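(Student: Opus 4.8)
The plan is to run the same second-moment argument as in Lemma~\ref{prop-Zsmall}, now exploiting the extra smallness coming from the fact that a vertex lying in a cluster of \emph{intermediate} size (between $N$ and $2N$) is rare. Write $W=Z_{\sss \geq N}-Z_{\sss \geq 2N}=\sum_{{\bf v}}I[N\leq |C({\bf v})|<2N]$, so that $\var_p(Z_{\sss \geq N}-Z_{\sss \geq 2N})=\var_p(W)$, and expand
$$
\var_p(W)=\sum_{{\bf v}_0,{\bf v}_1}\Big[\prob_p\big(A_{{\bf v}_0}\cap A_{{\bf v}_1}\big)-\prob_p(A_{{\bf v}_0})\prob_p(A_{{\bf v}_1})\Big],
\qquad A_{{\bf v}}:=\{N\leq |C({\bf v})|<2N\}.
$$
As in Lemma~\ref{prop-Zsmall}, split each term according to whether ${\bf v}_1\in C({\bf v}_0)$ or not, and write $\var_p(W)=S_{\sss {\bf v}_0\conns {\bf v}_1}+S_{\sss {\bf v}_0\ncs {\bf v}_1}$, where $S_{\sss {\bf v}_0\conns {\bf v}_1}$ collects the contributions with ${\bf v}_1\in C({\bf v}_0)$ (the diagonal included) and $S_{\sss {\bf v}_0\ncs {\bf v}_1}$ collects the remaining intersection terms together with all the subtracted products.

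The term $S_{\sss {\bf v}_0\conns {\bf v}_1}$ is easy: on $\{{\bf v}_1\in C({\bf v}_0)\}$ the two clusters coincide, so $S_{\sss {\bf v}_0\conns {\bf v}_1}=V\,\expec_p[|C({\bf v}_0)|\,I[A_{{\bf v}_0}]]\leq 2NV\,\prob_p(A_{{\bf v}_0})$. Since $A_{{\bf v}_0}\subseteq\{|C({\bf v}_0)|\in[N,2N]\}$ and $N\leq V^{2/3}$, $N\ll \vep V$, Lemma~\ref{lem.tail} gives $\prob_p(A_{{\bf v}_0})=O(N^{-1/2})$, whence $S_{\sss {\bf v}_0\conns {\bf v}_1}=O(V\sqrt N)=O\big(\tfrac{V^2}{\sqrt N}\cdot\tfrac NV\big)$, which is dominated by the right-hand side of~\refeq{VarZbd}.

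The real work is the estimate of $S_{\sss {\bf v}_0\ncs {\bf v}_1}$, and here I would use the mechanism behind \cite[inequality~(9.7)]{bchss1}. Conditioning on the exploration of $C({\bf v}_0)$, on $\{{\bf v}_1\notin C({\bf v}_0)\}$ the cluster $C({\bf v}_1)$ coincides with the cluster $C'({\bf v}_1)$ of ${\bf v}_1$ in the pruned graph $H(2,n)\setminus C({\bf v}_0)$, whose law differs from the unconditional law of $C({\bf v}_1)$ only through the vacant boundary edges of $C({\bf v}_0)$. Because deleting vertices can only shrink clusters and $I[A_{{\bf v}_1}]=I[|C({\bf v}_1)|\geq N]-I[|C({\bf v}_1)|\geq 2N]$ is a difference of \emph{increasing} indicators, the change in $\prob_p(A_{{\bf v}_1}\mid C({\bf v}_0))$ is bounded by the probability that $C'({\bf v}_1)$ has size below $2N$ and yet sends an edge into $C({\bf v}_0)$; summing this over ${\bf v}_1$ turns it into a sum over the edges leaving $C({\bf v}_0)$, each carrying a factor $p$ and a factor $\expec_p[|C'(\cdot)|\,I[|C'(\cdot)|<2N]]=O(N\vep+\sqrt N)$ by Lemma~\ref{lem.tail} (equivalently Lemma~\ref{prop-BP}). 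Equivalently, one may write $W=Z_{\sss <2N}-Z_{\sss <N}$ and apply the \cite[(9.7)]{bchss1} bound to the disconnected parts of $\var_p(Z_{\sss <2N})$, $\var_p(Z_{\sss <N})$ and $\Cov_p(Z_{\sss <2N},Z_{\sss <N})$, combining the three so that the $\Theta(\vep)$-order contributions telescope and only terms governed by the intermediate scale remain. In either route one then feeds the pruned process (offspring $\Bi(\tcn,p)$ with $\cn-\tcn=O(\log n+N/n)$) into Proposition~\ref{prop.branch-1} and compares it with the $\Bi(\cn,p)$ process; its error terms $|\vep-\tvep|=p|\cn-\tcn|=O(\log n/n+N/V)$ and $O(\ell^{-3})$ with $\ell\sim N$ are precisely what produce the three quantities $\tfrac{\log n}{n}$, $\tfrac NV$ and $\tfrac1{N^3}$ in~\refeq{VarZbd}, while the prefactor $\tfrac{V^2}{\sqrt N}$ comes from $p\cdot(\#\text{edges of }H(2,n))=O(V)$ together with the extra $O(N^{-1/2})$ supplied by Lemma~\ref{lem.tail}.

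The main obstacle is exactly the cancellation inside $S_{\sss {\bf v}_0\ncs {\bf v}_1}$. A crude bound on the disconnected covariance would only give $O(\vep V^2/\sqrt N)$, which is \emph{not} of the required form, since $\vep$ can be much larger than $\tfrac{\log n}{n}\vee\tfrac NV\vee\tfrac1{N^3}$. One has to arrange the comparison so that the leading $\Theta(\vep)$ contribution — the same contribution whose disappearance makes a single cluster rarely of size in $[N,2N)$ — cancels at the level of the variance. This is why Proposition~\ref{prop.branch-1} (a \emph{difference} of two branching-process tails, sharp to the needed order) is used rather than merely Proposition~\ref{prop.branch-3}, and why one cannot simply quote \cite[(9.7)]{bchss1} for the non-monotone event $\{N\leq |C({\bf v})|<2N\}$ but must either split it into two decreasing events and track the telescoping, or carry out the conditioning argument while retaining the constraint $|C'(\cdot)|<2N$ throughout so that all the vertex sums stay finite.
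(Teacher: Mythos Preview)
Your overall plan is right and matches the paper: split $\var_p(W)$ into the connected and disconnected pieces, bound the connected piece by $V\sqrt N$ via Lemma~\ref{lem.tail}, and for the disconnected piece compare the conditional law of $|C({\bf v}_1)|$ (given $C({\bf v}_0)$ has size in $(N,2N]$ and ${\bf v}_1\notin C({\bf v}_0)$) with the unconditional law using the two binomial Galton--Watson processes with parameters $\cn$ and $\tcn$ and Proposition~\ref{prop.branch-1}. You also correctly identify the crucial point: the leading $\Theta(\vep)$ must cancel, and this is exactly what Proposition~\ref{prop.branch-1} is designed to deliver.

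Two things are off in the execution. First, the paper does \emph{not} run the \cite[(9.7)]{bchss1} boundary-edge mechanism here; the summing-over-boundary-edges picture you sketch is awkward for a non-monotone event and, as you partly note, risks leaving a residual $O(\vep)$ term. The paper instead takes the direct sandwich: it shows
\[
\Pr\big(|C({\bf v}_1)|\in(N,2N],\,{\bf v}_1\notin C({\bf v}_0)\,\big|\,|C({\bf v}_0)|\in(N,2N]\big)\le \prob_{\cn,p}(F\ge N)-\prob_{\tcn,p}(F\ge 2N)+O(V^{-3}),
\]
and
\[
\Pr\big(|C({\bf v}_1)|\in(N,2N]\big)\ge \prob_{\tcn,p}(F\ge N)-\prob_{\cn,p}(F\ge 2N)+O(V^{-3}),
\]
so that after subtraction the four GW tails regroup into
\(
\big[\prob_{\cn,p}(F\ge N)-\prob_{\tcn,p}(F\ge N)\big]+\big[\prob_{\cn,p}(F\ge 2N)-\prob_{\tcn,p}(F\ge 2N)\big],
\)
which is precisely what Proposition~\ref{prop.branch-1} bounds by $C(|\vep-\tvep|+N^{-3})$. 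Multiplying by $\Pr(A_{{\bf v}_0})=O(N^{-1/2})$ from Lemma~\ref{lem.tail} and summing over the $V^2$ pairs gives \refeq{VarZbd} directly. This bypasses the telescoping-of-covariances you propose in Route~2.

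Second, you assert $\cn-\tcn=O(\log n+N/n)$ without saying why, but this is the one place where the geometry of $H(2,n)$ enters and it is not automatic: it comes from Proposition~\ref{prop-badlines}, which says that \wvhp{} a cluster of size at most $2N$ has at most $C(\log n\vee N/n)$ vertices on any line. That is what guarantees that, after removing $C({\bf v}_0)$, every remaining vertex still has at least $\tcn=\cn-2C(\log n\vee N/n)$ white neighbours, so the pruned exploration dominates a $\Bi(\tcn,p)$ branching process. Without invoking this line-count bound, you have no control on $\tcn$, and the whole comparison collapses.
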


\begin{prf} We have
    $$
    \var_p(Z_{\sss \geq N}-Z_{\sss \geq 2N})
    =\sum_{{\bf v}_0,{\bf v}_1} \Pr ( |C({\bf v}_0)| \in (N,2N],
     |C({\bf v}_1)|\in (N,2N])-\Pr (|C({\bf v}_0)| \in (N,2N])^2.
    $$
Once again we split the sum according to whether or not ${\bf
v}_1\in  C({\bf v}_0)$, obtaining
    \eqan{
    \var_p(Z_{\sss \geq N}-Z_{\sss \geq 2N})
    &=S_{\sss {\bf v}_0\conns {\bf v}_1}+S_{\sss {\bf v}_0\ncs {\bf v}_1},
    }
where \chs{now}
    \eqan{
    S_{\sss {\bf v}_0\conns {\bf v}_1}&=\sum_{{\bf v}_0,{\bf v}_1}
    \Pr \Big( |C({\bf v}_0)|\in (N,2N], {\bf v}_1\in  C({\bf v}_0)\Big),\\
    S_{\sss {\bf v}_0\ncs {\bf v}_1}&=\sum_{{\bf v}_0,{\bf v}_1}
    \Big [\Pr \Big( |C({\bf v}_0)|, |C({\bf v}_1)|\in (N,2N], {\bf v}_1 \not \in  C({\bf v}_0)\Big)
    -\Pr \Big(|C({\bf v}_0)| \in (N,2N]\Big)^2 \Big].
    }
Just as in the proof of Lemma~\ref{prop-Zsmall},
    \eqn{
    S_{\sss {\bf v}_0\conns {\bf v}_1}
    =V\expec_p\big[|C({\bf v}_0)| I[\chs{N<|C({\bf v}_0)|\leq 2N}]\big]
    \leq CV\sqrt{N}.
    }
But
    \eqn{
    V\sqrt{N}\leq \frac{V^2}{\sqrt{N}}\Big(\frac{\log{n}}{n}\bigvee
    \frac{N}{V}\Big),
    }
and so $ S_{\sss {\bf v}_0\conns {\bf v}_1}$ is bounded by the
right hand side of \refeq{VarZbd}.

Dealing with $S_{\sss {\bf v}_0\ncs {\bf v}_1}$ requires more
effort. Define
    \eqn{
    p_{{\bf v}_0, {\bf v}_1}\=\Pr \Big( |C({\bf v}_0)|\in (N,2N],
    |C({\bf v}_1)|\in (N,2N], {\bf v}_1 \not \in  C({\bf v}_0)\Big)
    -\Pr \big(|C({\bf v}_0)| \in (N,2N]\big)^2,
    }
so that
    $$S_{\sss {\bf v}_0\ncs {\bf v}_1}=\sum_{{\bf v}_0,{\bf v}_1}
    p_{{\bf v}_0, {\bf v}_1}.$$
Now rewrite
    \begin{align*}
    \frac{p_{{\bf v}_0, {\bf v}_1}}{\Pr (|C({\bf v}_0)| \in (N,2N])}
    & =
        \Pr \Big(|C({\bf v}_1)|\in (N,2N], {\bf v}_1 \not \in  C({\bf
    v}_0) \big | |C({\bf v}_0)| \in (N,2N]\Big) \\
    &\qquad -\Pr \big(|C({\bf v}_1)| \in (N,2N]\big).
    \end{align*}

Recall that $N({\bf v},i)$ is the number of elements in the $i$-th
horizontal line included in the cluster until time $\eta n^2$.
The proof of Proposition~\ref{prop-badlines} implies that there is some constant $C > 0$
such that \wvhp{}
every ${\bf v}$ such that $|C({\bf v})|\in (N,2N]$ satisfies
    \eqn{
    N ({\bf v},i)\leq C \Big [\log{n}\vee \frac{N}{n} \Big].
    }
(To see this, think of running the exploration process with stopping
time $T$ as in \refeq{Tdef}, where $\eta$ is defined by $2N=\lceil
\eta V\rceil$ (so in particular $\eta \ll \vep$).
Since $|C({\bf v})|\in (N,2N]$, we have $C({\bf v})=C_{\sss T}({\bf v})$.)
Letting
    \eqn{
    \lbeq{cnrel}
    \chs{\tcn}=\cn -2C \Big [\log{n}\vee \frac{N}{n} \Big ],
    }
we can lower bound
    \eqn{
    \Pr (|C({\bf v}_0)| \in (N,2N])
    \geq \prob_{\chs{\tcn}, p}(F\geq N)-\prob_{\cn, p}(F\geq 2N)+O(V^{-3}).
    }
Further, we can upper bound
    \eqan{
    &\Pr (|C({\bf v}_1)|\in (N,2N], {\bf v}_1 \not \in  C({\bf v}_0)\big||C({\bf v}_0)|
    \in (N,2N])\nonumber\\
    &\qquad
    =\Pr(|C({\bf v}_1)|\geq N, {\bf v}_1 \not \in  C({\bf v}_0)\big||C({\bf v}_0)| \in
    (N,2N])\nonumber\\
    &\qquad \quad
    -\Pr(|C({\bf v}_1)|\geq 2N, {\bf v}_1 \not \in  C({\bf v}_0)\big||C({\bf v}_0)| \in (N,2N])\nonumber\\
    &\qquad \leq \prob_{\cn, p}(F\geq N)-\prob_{\chs{\tcn}, p}(F\geq 2N)+O(V^{-3}).
    }
(Once again, to see this, think of first
exploring the cluster of ${\bf v}_0$ and, after that, the cluster of
${\bf v}_1$ in $H(2,n)$ with the cluster of ${\bf v}_0$ removed.)

Since $N\leq V^{2/3}$ and \ch{$N\ll\vep V$,} we can use
Lemma~\ref{lem.tail} to bound $\Pr \big(|C({\bf v}_0)| \in (N,2N]\big)$
and obtain
    \eqn{
    p_{{\bf v}_0, {\bf v}_1}
    \leq \frac{C}{\sqrt{N}}\Big(\prob_{\cn, p}(F\geq N)-\prob_{\chs{\tcn},
p}(F\geq N) +\prob_{\cn, p}(F\geq 2N)-\prob_{\chs{\tcn}, p}(F\geq 2N)\Big)+O(V^{-3}).
    }
By~(\ref{ingre1-diffBP}), with $\vep=p\cn-1$ and $\chs{\tvep}=p\chs{\tcn}-1$, for
every $\ell \in \bbN$,
    \eqn{
    \prob_{\cn, p}(F\geq \ell )-\prob_{\chs{\tcn}, p}(F\geq \ell)
    \leq C \Big(|\vep-\chs{\tvep}|+\frac{1}{n\ell^{1/2}}+\frac{1}{\ell^3}\Big).
    }
Note that, by \refeq{cnrel},
    \eqn{
    |\vep-\chs{\tvep}|= C\Big[\frac{\log{n}}{n}\bigvee \frac{N}{n^2}\Big],
    }
so that we always have $\frac{1}{n\ell^{1/2}}
=O(|\vep-\chs{\tvep}|)$. 

Consequently (with the value of $C$ adjusted between inequalities),
for all vertex pairs ${\bf v}_0, {\bf v}_1$,
    \eqn{
    p_{{\bf v}_0, {\bf v}_1}
    \leq \frac{C}{\sqrt{N}}\Big(|\vep-\chs{\tvep}|+\frac{1}{N^3}\Big).
    }
Summing over ${\bf v}_0, {\bf v}_1$,
    \eqn{
    S_{\sss {\bf v}_0\ncs {\bf v}_1}
    =\sum_{{\bf v}_0, {\bf v}_1} p_{{\bf v}_0, {\bf v}_1}
    \leq \frac{CV^2}{\sqrt{N}}\Big(|\vep-\chs{\tvep}|\vee \frac{1}{N^3} \Big)\ch{=\frac{C V^2}{\sqrt{N}}\Big(\frac{\log{n}}{n}\bigvee
    \frac{N}{V}\bigvee \frac{1}{N^3}\Big),}
    }
\ch{since $|\vep-\chs{\tvep}| =  O \Big ( \frac{\log{n}}{n}\vee
\frac{N}{n^2}\Big)$.}
\end{prf}

\subsection{Proof of Proposition~\ref{lem-varZ}}
\label{sec-PfvarZ} We are now ready to complete the proof of
Proposition~\ref{lem-varZ}. We will make essential use of Lemmas
\ref{prop-Zsmall} and \ref{prop-Zinbetween}. The \ch{choice $\vepnull$
in Proposition~\ref{lem-varZ} will be given by $\vepnull^{-2} =\NV$,
where $\NV$ is determined below.}

Let $\delta > 0$, and, for $i=0,1, \ldots, I-1$, let
    \eqn{
    \lbeq{deltaidef}
    \delta_i= \frac{\delta}{4\zeta(2)[(i+1)\wedge (I-i)]^2}.
    }
The reasons for our choice for $\{\delta_i\}_{i=0}^{I-1}$
will become apparent shortly. For now let us note that
    \eqn{
    \sum_{i=1}^{I-1} \delta_i \leq \delta/2.
    }

Recall the definition of $\bar{Z}_{\ge \ell}$ from \refeq{Zbar-def}
and the decomposition in \refeq{decomp-conc}.
We will prove that the right hand side of \refeq{decomp-conc}
is $o(1)$ for suitable $\Nul$ and $\NV$; the
conditions that $\Nul$ and $\NV$ must satisfy are as follows:
    \begin{align}
    \Nul&\ch{\ll n}, \qquad \Nul\ch{\ll \vep^2 V,}
    \qquad \Nul\gg \frac{(\log{n})^2}{n^2 \vep^4},
    \qquad \Nul\geq \Big(\frac{n}{\log{n}}\Big)^{1/3},
    \lbeq{N-ass}\\
    \NV&\ll \vep V, \qquad \NV\leq V^{2/3}.
    \lbeq{NV-ass}
    \end{align}
Finally, Proposition~\ref{lem-varZ} requires that $\NV \gg
\vep^{-2}$. \ch{As $\vep\gg
(\log{V})^{1/3}V^{-1/3}=(\log{V})^{1/3}n^{-2/3}$, the choices
$\Nul=n\vep^{1/2}(\log{V})^{1/2}$ and $\NV=V^{2/3}=n^{4/3}$ clearly
satisfy the bounds in \refeq{N-ass}--\refeq{NV-ass}; thus we have
proved that appropriate choices can be made.

Let us note that it is here that the condition $\vep\gg
(\log{n})^{1/3} V^{-1/3}$ in Theorem \ref{thm-maind2} arises. We
need to show concentration of measure for clusters of size $\Nul$,
which satisfies the constraint $\Nul \ll n$; for such clusters , we
are unable to control very precisely the number of vertices per
\chs{coordinate} line (see Proposition~\ref{prop-badlines}) -- this then
gives rise to the $\log n/n$ factor in Lemma~\ref{prop-Zinbetween},
and hence at this point in our proof.

We now prove that the concentration bound in Proposition
\ref{lem-varZ} holds.}
By \refeq{N-ass}, $\Nul$ satisfies the hypotheses of
Lemma~\ref{prop-Zsmall}; hence, using the Chebyshev inequality,
    \eqn{
    \prob_p\big(|\Zbar_{\sss \geq \ch{\Nul}}|\geq
    \delta \vep V/2\big)\leq \frac{4\var_p(Z_{\sss \geq \Nul})}{(\delta \vep V)^2}=o(1).
    }
\ch{Denote $N_i=2^{i+1}\Nul$, and recall the relation
between $\Nul$ and $\NV$ in \refeq{NV-choice}.}
Since $N_i\leq \NV$, \ch{\refeq{NV-ass} implies}
that \ch{$N_i\ll \vep V$} and
$N_i\leq V^{2/3}$ for each $i$. Therefore, applying
Lemma~\ref{prop-Zinbetween} to $N_i=2^{i+1}\Nul$ and using the
Chebyshev inequality, we obtain
    \eqan{
    \prob_p\Big(|\Zbar_{\sss \geq 2^{i+1}\Nul}-
    \Zbar_{\sss \geq 2^{i}\Nul}|\geq
    \delta_i \vep V\Big)
    &\leq \big(\delta_i \vep V)^{-2}
    \var_p\big(Z_{\sss \geq 2^{i+1}\Nul}-
    Z_{\sss \geq 2^{i}\Nul}\big)\nonumber\\
    &\leq \big(\delta_i \vep V)^{-2} \Big [%CVN_i\vep+
    \frac{C V^2}{\sqrt{N_i}}\Big(\frac{\log{n}}{n}\bigvee
    \frac{N_i}{V}\bigvee \frac{1}{N_i^3}\Big)\Big ].
    }

It follows that under our assumptions
    \eqn
    {\prob_p\big(|\Zbar_{\sss \geq \NV}|\geq
    \delta \vep V\big)
    \leq o(1)
    +\sum_{i=0}^{I-1} \frac{\frac{C V^2}{\sqrt{N_i}}\Big(\frac{\log{n}}{n}\bigvee
    \frac{N_i}{V}\bigvee \frac{1}{N_i^3}\Big)}{(\delta_i \vep V)^2}.
    }
Each term here is given by
    \eqn{
    \lbeq{intermbd}
    \frac{C V^2}{\sqrt{N_i}}\frac{\Big(\frac{\log{n}}{n}\bigvee
    \frac{N_i}{V}\bigvee \frac{1}{N_i^3}\Big)}{(\delta_i \vep V)^2}
    =\frac{C}{\sqrt{N_i}}\frac{\Big(\frac{\log{n}}{n}\bigvee
    \frac{N_i}{V}\bigvee \frac{1}{N_i^3}\Big)}{\delta_i^2 \vep^2}.
    }
By the last assumption in \refeq{N-ass}, for all $i$,
$\frac{\log{n}}{n}\geq \frac{1}{\Nul^3}\geq \frac{1}{N_i^3}$,
so that the last term is never equal to the maximum.
It follows that we need to upper bound
    \eqn
    {\prob_p\big(|\Zbar_{\sss \geq \NV}|\geq
    \delta \vep V\big)
    \leq o(1)
    +\sum_{i=0}^{I-1} \frac{%CVN_i\vep+
    \frac{C V^2}{\sqrt{N_i}}\Big(\frac{\log{n}}{n}\bigvee
    \frac{N_i}{V}\Big)}{(\delta_i \vep V)^2}.
    }
Letting $m$ be the smallest $i$ such that
    \eqn{
    \frac{\log{n}}{n}\leq \frac{N_i}{V},
    }
we can write
    \eqn{
    \lbeq{msplit}
    \sum_{i=0}^{I-1} \frac{
    \frac{C V^2}{\sqrt{N_i}}\Big(\frac{\log{n}}{n}\bigvee
    \frac{N_i}{V}\Big)}{(\delta_i \vep V)^2}
    =\sum_{i=0}^m \frac{C}{\sqrt{N_i}}\frac{\log{n}}{n\delta_i^2 \vep^2}
    +\sum_{i=m+1}^{I-1} C\frac{\sqrt{N_i}}{\delta_i^2 \vep^2V}.
    }
Using our definition of $\delta_i$ in~(\ref{deltaidef}), we can upper bound
$ \sum_{i=m+1}^{I-1}\frac{\sqrt{N_i}}{\delta_i^2}$ by
    \begin{align}
    \sum_{i=m+1}^{I-1}\frac{\sqrt{N_i}}{\delta_i^2}
    &\leq \frac{16\zeta(2)^2}{\delta^2} 2^{I/2}\sqrt{\Nul} \sum_{i=1}^{I-1} 2^{(i-I)/2}(I-i)^2
    \nonumber\\
    &\leq  \frac{16\zeta(2)^2}{\delta^2} 2^{I/2}\sqrt{\Nul} \sum_{k=1}^{\infty}
    k^2 2^{-k/2} \le C 2^{I/2}\sqrt{\Nul}=C\sqrt{\NV}.
    \end{align}
Hence the second sum in \refeq{msplit} is at most
    \eqn{
    \lbeq{bdsecterm}
    \sum_{i=m+1}^{I-1} C\frac{\sqrt{N_i}}{\delta_i^2 \vep^2V}
    \leq C\frac{\sqrt{\NV}}{\vep^2V}.
    }
We want the right hand side of~\refeq{bdsecterm} to be $o(1)$, which forces
    \eqn{
    \lbeq{restrNa}
    \NV=N_I\ll \vep^4V^2.
    }
The bound in \refeq{restrNa} holds, \ch{since $\NV=o(\vep V)$, by
the first constraint in \refeq{NV-ass}, and since $\vep^3 V\geq 1$.}

On the other hand, the first sum in \refeq{msplit} can be upper
bounded by
    \eqn{
    \lbeq{bdthirdterm}
    \sum_{i=0}^m \frac{C}{\sqrt{N_i}}\frac{\log{n}}{n\delta_i^2 \vep^2}
    \leq \frac{C \log{n}}{n\sqrt{\Nul}\vep^2\delta^2}=o(1),
    }
since $\Nul\gg \frac{(\log{n})^2}{n^2 \vep^4}$ by the third bound in
\refeq{N-ass}. This proves the required concentration bound,
\ch{thus establishing Proposition~\ref{lem-varZ} and Theorem
\ref{thm-maind2}.} \qed

\subsection*{Acknowledgement}
The work of RvdH was supported in part by Netherlands Organisation for
Scientific Research (NWO). The work of MJL was partly supported by the
Nuffield Foundation.

\newcommand\AAP{\emph{Adv. Appl. Probab.} }
\newcommand\JAP{\emph{J. Appl. Probab.} }
\newcommand\JAMS{\emph{J. \AMS} }
\newcommand\MAMS{\emph{Memoirs \AMS} }
\newcommand\PAMS{\emph{Proc. \AMS} }
\newcommand\TAMS{\emph{Trans. \AMS} }
\newcommand\AnnMS{\emph{Ann. Math. Statist.} }
\newcommand\AnnPr{\emph{Ann. Probab.} }
\newcommand\CPC{\emph{Combin. Probab. Comput.} }
\newcommand\JMAA{\emph{J. Math. Anal. Appl.} }
\newcommand\RSA{\emph{Random Struct. Alg.} }
\newcommand\SPA{\emph{Stoch. Proc. Appl.} }
\newcommand\ZW{\emph{Z. Wahrsch. Verw. Gebiete} }
\newcommand\PTRF{\emph{Probab. Theor. Relat. Fields}}
\newcommand\DMTCS{\jour{Discr. Math. Theor. Comput. Sci.} }

\newcommand\AMS{Amer. Math. Soc.}
\newcommand\Springer{Springer}
\newcommand\Wiley{Wiley}

\newcommand\vol{\textbf}
\newcommand\jour{\emph}
\newcommand\book{\emph}
\newcommand\inbook{\emph}
\def\no#1#2,{\unskip#2, no. #1,} %(typeset after year)

\newcommand\webcite[1]{\hfil\penalty0\texttt{\def~{\~{}}#1}\hfill\hfill}

\def\nobibitem#1\par{}


\begin{thebibliography}{99}

\bibitem{AS00}
N.\ Alon and J.\ Spencer.
\newblock The Probabilistic Method, 2nd Edition.
\newblock John Wiley and Sons, New York (2000).

\bibitem{an72}
K.B. Athreya and P.E. Ney, Branching Processes, Springer, Berlin, 1972.

\bibitem{bhj}
A.D. Barbour, L. Holst, S. Janson, Poisson Approximation, OUP, Oxford, 1992.


\bibitem{bchss1}
C. Borgs, J.T. Chayes, R. van der Hofstad, G. Slade and J. Spencer,
Random subgraphs of finite graphs: I. The scaling window under the
triangle condition. \RSA \vol{27} (2005), 137--184.

\bibitem{bchss2}
C. Borgs, J.T. Chayes, R. van der Hofstad, G. Slade and J. Spencer,
Random subgraphs of finite graphs: II.The lace expansion and the
triangle condition. \AnnPr \vol{33} (2005), 1886--1944.


\bibitem{bchss3}
C. Borgs, J.T. Chayes, R. van der Hofstad, G. Slade and J. Spencer,
Random subgraphs of finite graphs: III. The phase transition on the
$n$-cube. \ch{{\it Combinatorica} \vol{26} (2006), 395--410.}

\bibitem{BCKS99}
C.~Borgs, J.~T.~Chayes, H.~Kesten and J.~Spencer.
\newblock Uniform boundedness of critical crossing probabilities implies hyperscaling.
\newblock {\em Random Struct.\ Alg.}, {\bf 15} (1999), 368--413.

\bibitem{BCKS01}
C.~Borgs, J.~T.~Chayes, H.~Kesten and J.~Spencer.
\newblock The birth of the infinite cluster: finite-size scaling in percolation.
\newblock {\em Commun.\ Math.\ Phys.}, {\bf 224} (2001), 153--204.


\bibitem{cs}
L.S.~Chandran and C.R.~Subramanian, A spectral lower bound for the
treewidth of a graph and its consequences, preprint, available at
{www.mpi-sb.mpg.de/$\chs{\sim}$sunil/applyspectree.ps}

\bibitem{devroye}
L. Devroye, Branching Processes and Their Applications in the Analysis
of Tree Structures and Tree Algorithms, in Probabilistic Methods for
Algorithmic Discrete Mathematics, ed. M.Habib, C. McDiarmid,
J. Ramirez-Alfonsin and B. Reed, 249--314, Springer-Verlag, Berlin, 1998.

\bibitem{dwass}
M. Dwass, The total progeny in a branching process, \JAP{} \vol{6}
(1969), 682--686.

\bibitem{HeHof05}
M.~Heydenreich and R.~van der Hofstad.
\newblock Random graph asymptotics on high-dimensional tori.
\newblock \ch{{\em Commun.\ Math.\ Phys.}, {\bf 270} (2007), 335--358.}

\bibitem{HS04a}
R.~van~der Hofstad and G.~Slade.
\newblock Expansion in $n^{-1}$ for percolation critical values on the $n$-cube
  and ${\mathbb Z}^n$: the first three terms.
\newblock {\em Combin.\ Probab.\ Comput.} \vol{15} (2006), 695--713.

\ch{\bibitem{HS04b}
R.~van~der Hofstad and G.~Slade.
\newblock Asymptotic expansions in {$n\sp {-1}$} for percolation critical values on
the {$n$}-cube and {$\Bbb Z\sp n$},
\newblock \RSA \vol{27} (2005), 331--357.}

\bibitem{cycle}
S. Janson, Cycles and unicyclic components in random graphs, \CPC{}
\vol{12} (2003), 27--52.

\bibitem{Jans02}
S.~Janson, On concentration of probability, Contemporary Combinatorics, ed. B. Bollob\'as,
Bolyai Soc.\ Math.\ Stud.\ \vol{10} (2002), J\'anos Bolyai Mathematical Society, Budapest, 289--301.


\bibitem{giant}
S. Janson, D.E. Knuth, T. {\L}uczak \& B. Pittel,
The birth of the giant component,
\RSA
\vol{3} (1993),
233--358.


\bibitem{JLR}
S. Janson, T. \L uczak \& A. Ruci\'nski,
\book{Random Graphs},
\Wiley, New York, 2000.

\bibitem{kolchin}
V.F. Kolchin, Moments of degeneration of a branching process and
height of a random tree, Mathematical Notes of the Academy of Sciences
of the USSR \vol{6} (1978), 954--961.

\bibitem{lmu03}
M.J. Luczak, C. McDiarmid and E. Upfal, On-line routing of random
calls in networks, \PTRF{} \vol{125} (2003), 457--482.


\bibitem{cmcd98}
C. McDiarmid, Concentration, in Probabilistic Methods for
Algorithmic Discrete Mathematics, ed. M.Habib, C. McDiarmid,
J. Ramirez-Alfonsin and B. Reed, 195--248, Springer-Verlag, Berlin, 1998.

\bibitem{n07}
A. Nachmias, Mean-field conditions for percolation on finite graphs,
preprint.

\bibitem{otter}
R. Otter, The multiplicative process, \AnnMS{} \vol{20} (1949), 206--224.

\end{thebibliography}
\end{document}